\newtheorem{lemma}{Lemma}
\newtheorem{theorem}{Theorem}
\theoremstyle{definition}
\newtheorem{assumption}{A -}
\newtheorem{definition}{Definition}
\newtheorem{remark}{Remark}
\newtheorem{corollary}{Corollary}
\DeclareMathOperator*{\dist}{dist}
\DeclareMathOperator*{\supp}{supp}
\title{$L^p$-estimates and regularity for SPDEs with monotone semilinearity
}
\author{Neelima}
\address{ School of Mathematics, University of Edinburgh, United Kingdom
and Ramjas College, University of Delhi, Delhi, India }
\email{N.Neelima@sms.ed.ac.uk}
\author{David \v{S}i\v{s}ka}
\address{School of Mathematics, University of Edinburgh, United Kingdom}
\email{D.Siska@ed.ac.uk}
\date{11th September 2019}
\keywords{Stochastic Partial Differential Equations, Regularity, Weighted Sobolev Spaces}
\subjclass[2010]{60H15, 35R60. }
\thanks{This an electronic reprint of the article published in {\em Stoch PDE: Anal Comp} (2019). }
\thanks{
It is available
at \url{https://doi.org/10.1007/s40072-019-00150-w}.
It may differ from the published version in typographical detail.
}
\begin{document}

\begin{abstract}
Semilinear stochastic partial differential equations on 
bounded domains $\mathscr{D}$ are considered.
The semilinear term may have arbitrary polynomial growth as long as it is 
continuous and monotone except perhaps near the origin.
Typical examples are the stochastic Allen--Cahn and Ginzburg--Landau equations.
The first main result of this article are $L^p$-estimates for such equations.
The $L^p$-estimates are subsequently employed in obtaining higher regularity. 
This is motivated by ongoing work to obtain rate of 
convergence estimates for numerical approximations 
to such equations.
It is shown, under appropriate assumptions, 
that the solution is continuous in time with values 
in the Sobolev space $H^2(\mathscr{D}')$ and $\ell^2$-integrable with
values in $H^3(\mathscr{D}')$, 
for any compact $\mathscr{D}' \subset \mathscr{D}$.
Using results from $L^p$-theory of SPDEs obtained by Kim~\cite{kim04}
we get analogous results in weighted Sobolev spaces 
on the whole $\mathscr{D}$.
Finally it is shown that the solution is H\"older continuous in time
of order $\frac{1}{2} - \frac{2}{q}$ as a process with values
in a weighted $L^q$-space, where $q$ arises from the integrability 
assumptions imposed on the initial condition and forcing terms.
\end{abstract}

\maketitle

\tableofcontents

\section{Introduction}

The aim of this article is to obtain $L^p$-estimates and regularity of solutions
to the semilinear stochastic partial differential equation (SPDE)
\begin{equation}
\label{eq:spde}
\begin{split}
du_t & =(L_tu_t+f_t(u_t,\nabla u_t)+f_t^0)dt+\sum_{k\in \mathbb{N}}(M_t^ku_t+g_t^k)dW_t^k
\,\,\,\ \text{on}\,\,\, [0,T]\times \mathscr{D}\\
 u_t & =0 \,\,\, \text{on}\,\,\, \partial \mathscr{D}, \quad u_0 =\phi \,\,\, \text{on}\,\,\, \mathscr{D}.
 \end{split}
\end{equation}
where,
\begin{equation}\label{eq:def_L,M}
L_t u:= \sum_{j=1}^d\partial_j\Big(\sum_{i=1}^da_t^{ij}\partial_iu\Big)+\sum_{i=1}^db_t^i\partial_iu+c_tu \,\,\,\,\,\,\text{and}\,\,\,\, \,\,
M_t^k u :=\sum_{i=1}^d\sigma_t^{ik}\partial_iu+\mu_t^ku.
\end{equation}
Here $\mathscr{D}$ is a bounded domain in $\mathbb{R}^d$ and $W^k$ are independent Wiener
processes. 
The coefficients $a$ and $\sigma$ are assumed to satisfy 
stochastic parabolicity condition (and thus our equation is non-degenerate). 
Moreover all the coefficients $a$, $b, c$, $\sigma$ and $\mu$ are assumed to 
be measurable and bounded, 
$f = f_t(\omega, x, r, z)$ is measurable, continuous in $(r,z)$, monotone in 
$r$ except perhaps around the origin, Lipschitz continuous in $z$, 
bounded in $x$ and of polynomial growth in $r$ (of arbitrary order).
The forcing terms $f^0$ and $g$ are assumed to satisfy appropriate
integrability conditions. 
A typical example of equation fitting this setting is the stochastic Ginzburg--Landau equation.
In this case 
\[f(r) = -|r|^{\alpha-2}r\,, \,\,\,\alpha \geq 2.\]
To obtain higher interior regularity we will have to impose further regularity 
assumptions on the coefficients. 
To obtain regularity up to the boundary (in weighted Sobolev spaces) we will 
also need to impose regularity assumptions on the domain.
The assumptions will be formulated precisely in further sections.

The main aim of this article is to obtain regularity results for the solutions 
to the SPDE~\eqref{eq:spde}.
This is motivated by ongoing work to obtain rate of 
convergence estimates for numerical approximations 
to such equations.
For a semilinear equation it is natural to consider the term 
$f:=f(u,\nabla u) + f^0$ as a free term in an appropriate linear SPDE and
to use established methods and theory to obtain regularity for this linear SPDE.
Due to uniqueness of solutions to~\eqref{eq:spde}, see Lemma~\ref{lem:uniqueness}, 
we then get the same regularity for the semilinear equation~\eqref{eq:spde}.
However, for the theory of regularity of linear SPDEs to apply, we need to show that 
the new free term $f$ satisfies appropriate integrability conditions.
This would typically mean at least $L^2$-integrability. 
Since the semilinear term in~\eqref{eq:spde} is allowed to have arbitrary polynomial 
growth, it is clear that we need to obtain $L^p$-estimates for solution
to~\eqref{eq:spde} with $p\geq 2$ sufficiently large.
Note that if one attempts to do this using Sobolev embedding theorem then
one immediately runs into restrictions on the combination of dimension of $\mathscr{D}$ 
and the growth of the semilinear term. 

The main novelty of this article is in allowing arbitrary dimension of $\mathscr{D}$ and growth of the semilinear term, 
see Theorem~\ref{thm:sol_bound}. 
This is achieved by
using the monotonicity property of the semilinear term and 
a cutting argument to obtain the required $L^p$-estimate. 
Once these have been established we then obtain new spatial regularity 
results for the SPDE~\eqref{eq:spde},
these are both interior regularity and up-to-the-boundary regularity 
in weighed Sobolev spaces, 
see Theorems~\ref{thm:higher_regularity}
and~\ref{thm:Lp_regularity}.
Finally we have a new time regularity result (in weighted space again),
see Theorem~\ref{thm:time_reg}. 
These effectively say that under appropriate assumptions the SPDE~\eqref{eq:spde}
has two additional derivatives. 
It seems however that our method does not allow one to obtain arbitrarily high 
regularity (even for equation with smooth data and coefficients), 
see Remark~\ref{rem:regularity} for explanation.
Nevertheless, raising the regularity twice is enough to find the rate of convergence of various numerical approximations using the techniques from e.g. Gy\"ongy and Millet~\cite{millet09}.

Regularity of solutions to linear PDEs has been studied intensively, see e.g. Evans \cite{evans97}, Gilbarg and Trudinger \cite{gil83} for elliptic PDEs , Lady\v{z}enskaja et al. \cite{lady68} for parabolic PDEs and references therein. Regularity results for linear elliptic and parabolic PDEs in H\"older spaces can be found in Krylov \cite{krylov96}. Regularity of solutions to SPDEs has been an area of active interest for 
quite some time and here we point out some of the main results.
Regularity of solutions to linear SPDEs on the whole space has been proved in 
Rozovskii~\cite{Rozovskii90}.
On domains with a boundary the situation is much more involved and one cannot 
expect the same regularity up to the boundary as in the interior of the domain, 
see e.g. Examples 1.1 and 1.2 in Krylov~\cite{krylov94}.
After this observation two approaches to dealing with boundaries emerge:
one is to quantify the loss of regularity near the boundary 
using weighted Sobolev spaces. 
These allow oscillations and explosion of the spatial derivatives
of the solution near the boundary. 
The other approach is to side-step the problems created by the boundary
by restricting the class of
equations under consideration by imposing additional restriction on the 
noise term near the boundary 
(effectively disallowing stochastic forcing near the boundary), 
see Flandoli~\cite{flandoli90}.
Weighted Sobolev spaces have also been employed, in the context 
of $L^p$-thoery for linear SPDEs, by Kim~\cite{kim04}.
Unsurprisingly, there are fewer results for nonlinear SPDEs. 
Kim and Kim use the $L^p$-theory in~\cite{kim14} and~\cite{kim17}
to obtain regularity for quasilinear SPDEs where the coefficients are uniformly bounded.
Current results in Gerencs\'er~\cite{gerencser17} show
that for a class of SPDEs, including~\eqref{eq:spde}, there exists 
some H\"older exponent such that the solution is H\"older continuous in space
up to the boundary with this exponent. 
For interior regularity of a class of quasilinear equations 
associated with the ``$p$-Laplace'' operator see Breit~\cite{breit16}.
For SPDEs with drift given by the subgradient of a quasi-convex function 
and with sufficiently regular noise Gess~\cite{gess12} proves
higher regularity and existence of (analytically) strong solutions.
All the aforementioned work on regularity of nonlinear SPDEs has been done using the variational approach.  
For results obtained in the semigroup framework we refer the reader to the work of Jentzen and R\"ockner~\cite{jentzenrockner12} and
references therein. 
Regularity results for quasilinear PDEs of parabolic type can be found in \cite{lady68}. However, the results are obtained under the restrictions on the combination of dimension of $\mathscr D$ and the growth of the nonlinear term. 
Thus, to the best of our knowledge, our results are new even for deterministic semilinear PDEs with monotone semilinear term.   



The article is organised as follows: 
Section~\ref{sec: Lp} is devoted to the proof of Theorem~\ref{thm:sol_bound} which gives us the desired $L^p$-estimates for the solution to semilinear 
SPDE~\eqref{eq:spde}.
In Section~\ref{sec:regularity}, we first prove interior regularity for
the associated linear SPDE, see Theorem~\ref{thm:H_m_regularity}.   
We then use the results on interior regularity of the linear SPDE 
to prove Theorem~\ref{thm:higher_regularity}. 
In Section~\ref{sec:Lp_reg}, we prove regularity results up to the boundary and
time regularity in weighted Sobolev spaces using $L^p$-theory from Kim~\cite{kim04}. 
The main results and required assumptions are stated at the beginning of 
each section.

\section{$L^p$-estimates for the semilinear equation}
\label{sec: Lp}
Let $T>0$ be given, 
$(\Omega,\mathscr{F},(\mathscr{F}_t)_{t\in [0,T]},\mathbb{P})$ be a stochastic basis, 
$\mathscr{P}$ be the predictable $\sigma$-algebra and 
$W:=(W_t)_{t\in [0,T]}$ be an infinite dimensional Wiener martingale 
with respect to $(\mathscr{F}_t)_{t\in [0,T]}$, 
i.e. the coordinate processes $(W_t^k)_{t\in [0,T]},\,\, k\in \mathbb{N}$ 
are independent $\mathscr{F}_t$-adapted Wiener processes 
such that $W_t^k-W_s^k$ is independent of $\mathscr{F}_s$ for $s\leq t$. 
Further, let $\mathscr{D}$ be a bounded domain in $\mathbb{R}^d$ with 
Lipschitz boundary. We use standard notation for Lebesgue--Bochner 
and Sobolev spaces.
In general, if $X$ is a normed linear space then we will use $|\cdot|_X$ 
to denote the norm in this space. 
There are exceptions: if $x\in \mathbb{R}^d$ then $|x|$ denotes the Euclidean norm. 
For Lebesgue and Sobolev spaces over the entire domain $\mathscr{D}$ we 
will omit the dependence on $\mathscr{D}$. 
So e.g. if $h\in L^p(\mathscr{D})$ then we will write $|h|_{L^p}$ 
for $|h|_{L^p(\mathscr{D})}$. 
If $h\in L^p((0,T); L^p(\mathscr{D}))$ then we use 
$\|h\|_{L^p}$ to denote the norm.
Throughout this article \textcolor{red}{$C$} denotes a generic constant 
that may change from line to line.

Let $n\in \{0\}\cup \mathbb{N}$ and fix constants 
$K>0$, $\kappa > 0$, $\alpha \geq 2$ and $p \geq \alpha$. 
We assume the following: 

\begin{assumption} \label{ass:boundedness}
For any $i,j=1,\ldots,d$, the coefficients $a^{ij}, b^i$ and $c$ 
are real-valued, 
$\mathscr{P} \times \mathscr{B}(\mathscr{D})$-measurable and are bounded by $K$. 
The coefficients $\sigma^i=(\sigma^{ik})_{k=1}^\infty$, $\mu=(\mu^k)_{k=1}^\infty$ 
are $\ell^2$-valued,
$\mathscr{P} \times \mathscr{B}(\mathscr{D})$-measurable and almost surely
\[
\sum_{i=1}^d\sum_{k\in \mathbb{N}} |\sigma_t^{ik}(x)|^2+\sum_{k\in\mathbb{N}}|\mu_t^k(x)|^2 \leq K \quad \forall t \in [0,T], x\in \mathscr{D}.
\] 
\end{assumption}


\begin{assumption} \label{ass:parabolicity}
Almost surely
\[
\sum_{i,j=1}^d \Big(a_t^{ij}(x)-\frac{1}{2}\sum_{k\in\mathbb{N}}\sigma_t^{ik}(x)\sigma_t^{jk}(x)\Big)\xi_i \xi_j \geq \kappa|\xi|^2 \quad 
\forall t \in [0,T], x\in \mathscr{D}, \xi \in \mathbb{R}^d\,.
\]
\end{assumption}

\begin{assumption} \label{ass:f}
The function 
$f = f_t(\omega, x, r, z)$ 
is $\mathscr{P} \times \mathscr{B}(\mathscr{D}) \times \mathscr{B}(\mathbb{R})\times \mathscr{B}(\mathbb{R}^d)$-measurable,
it is continuous in $(r,z)$ almost surely for all $t$ and $x$.
Furthermore, almost surely
\begin{equation*}
\begin{split}
(r-r')(f_t(x,r,z)-f_t(x,r',z)) & \leq K|r-r'|^2,  \\
|f_t(x,r,z)-f_t(x,r,z')| & \leq K|z-z'|, \\
|f_t(x,r,z)| & \leq K(1+|r|)^{\alpha-1}
\end{split}
\end{equation*}
 for all $t, x, r,r', z, z' $.
\end{assumption}

\begin{assumption}\label{ass:initial}
$\phi \in L^p(\Omega, \mathscr{F}_0; L^p(\mathscr{D}))$, $f^0 \in L^p(\Omega \times (0,T), \mathscr{P};L^p(\mathscr{D}))$ and $g \in L^p(\Omega \times (0,T), \mathscr{P};L^p(\mathscr{D};\ell^2))$.
\end{assumption}

\begin{remark} \label{rem:decrease}
Without loss of generality, we may assume that almost surely for all $t$, $x$ and $z$ the function $r\mapsto f_t(x,r,z)$ is decreasing. 
If not, then \eqref{eq:spde} can be rewritten by replacing $f_t(x,r,z)$ with $\bar{f}_t(x,r,z):=f_t(x,r,z)-Kr$ and $c_t(x)$ with $\bar{c}_t(x):=c_t(x)+K$, where using Assumption A - \ref{ass:f}, $\bar{f}$ is decreasing in $r$.

Further, we may assume that almost surely for all $t$ and $x$, $f_t(x,0,0)=0$. Otherwise, we can replace $f_t(x,r,z)$ in \eqref{eq:spde} by $\tilde{f}_t(x,r,z):=f_t(x,r,z)-f_t(x,0,0)$ and $f_t^0$ by $\tilde{f}_t^0(x):=f_t^0(x)+f_t(x,0,0)$.
\end{remark}

\begin{definition}[$L^2$-Solution]\label{def:solution}
An adapted, continuous $L^2(\mathscr{D})$-valued process is said to be a solution of stochastic partial differential equation \eqref{eq:spde} if
\begin{enumerate}[(i)]
\item $dt\times \mathbb{P}$ almost everywhere $u \in L^\alpha(\mathscr{D}) \cap H_0^1(\mathscr{D})$ and
$$\mathbb{E}\int_0^T(|u_t|_{L^\alpha}^\alpha+|u_t|_{H_0^1}^2)\, dt < \infty\,,$$
\item almost surely for every $t\in [0,T]$ and $\xi \in C_0^\infty(\mathscr{D})$, 
$$(u_t,\xi) = (u_0,\xi) + \int_0^t \langle L_s(u_s)+f_s(u_s,\nabla u_s)+f_s^0,\xi \rangle ds + \sum_{k\in\mathbb{N}} \int_0^t (\xi,M_s^k(u_s)+g_s^k)dW_s^k.$$ 
\end{enumerate}
\end{definition}

The following theorem is the main result of this section.
\begin{theorem} \label{thm:sol_bound}
If Assumptions A-\ref{ass:boundedness} to A-\ref{ass:initial} hold, then there exists a unique solution $u$ to \eqref{eq:spde} and
\begin{equation}  \label{eq:p_bound}
\begin{split}
\mathbb{E}\sup_{0\leq t \leq T}|u_t|_{L^p}^p &+\mathbb{E}\int_0^T\int_\mathscr{D}|\nabla u_s|^2|u_s|^{p-2}dxds \\
& \leq C\mathbb{E}\Big(|\phi|_{L^p}^p+\Vert f^0 \Vert _{L^p}^p+\Vert|g|_{\ell^2}\Vert^p_{L^p}\Big),
\end{split}
\end{equation}
where $C=C(d,p,K,\kappa,T)$.
\end{theorem}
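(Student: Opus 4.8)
The plan is to obtain existence and uniqueness from the variational theory of monotone stochastic evolution equations and then to derive~\eqref{eq:p_bound} by applying an Itô formula to the functional $u\mapsto\int_{\mathscr D}|u|^p\,dx$. On the Gelfand triple $V:=L^\alpha(\mathscr D)\cap H^1_0(\mathscr D)\hookrightarrow L^2(\mathscr D)\hookrightarrow V^*$ (matching condition (i) of Definition~\ref{def:solution}), Assumptions A-\ref{ass:boundedness} to A-\ref{ass:f} supply boundedness, hemicontinuity and, thanks to the monotonicity of $f$ in $r$ together with its Lipschitz dependence in $z$ and the stochastic parabolicity, the monotonicity and coercivity of the operator $u\mapsto L_tu+f_t(u,\nabla u)$; existence then follows from the standard theory and uniqueness is Lemma~\ref{lem:uniqueness}. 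Using Remark~\ref{rem:decrease} I may and will assume throughout that $f_t(x,0,0)=0$ and that $r\mapsto f_t(x,r,z)$ is decreasing.

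The core difficulty is justifying the Itô formula for $|u_t|_{L^p}^p$. Since the solution is only known to lie in $H^1_0\cap L^\alpha$, some of the terms produced by a formal application, most notably $\int|u|^{p-2}u\,f(u,\nabla u)$ which a priori grows like $|u|^{p+\alpha-2}$, need not be integrable, so the formula cannot be invoked directly. To circumvent this I would use a \emph{cutting} argument: fix $C^2$ functions $\Phi_m$ with $\Phi_m(r)=|r|^p$ for $|r|\le m$ and with $\Phi_m''$ bounded (quadratic growth) for $|r|>m$. Then $F_m(u):=\int_{\mathscr D}\Phi_m(u)\,dx$ is a $C^2$ functional whose derivative $\Phi_m'(u)$ lies in $V$ whenever $u\in V$, so the Itô formula of the variational framework applies to $F_m(u_t)$. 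After establishing bounds uniform in $m$ I would let $m\to\infty$, using monotone convergence for the sign-definite terms and Fatou's lemma elsewhere to recover~\eqref{eq:p_bound}.

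With the formula available the decisive computation is the treatment of the second-order and semilinear terms. Integrating by parts turns the principal part of $L_t$ into $-p(p-1)\int|u|^{p-2}\sum_{i,j}a^{ij}\partial_iu\,\partial_ju$, while the Itô correction from the martingale part $M^k_tu$ contributes $+\tfrac12 p(p-1)\int|u|^{p-2}\sum_{i,j,k}\sigma^{ik}\sigma^{jk}\partial_iu\,\partial_ju$; adding these and invoking Assumption A-\ref{ass:parabolicity} yields the good dissipative term $-p(p-1)\kappa\int|u|^{p-2}|\nabla u|^2$, which provides the gradient integral on the left of~\eqref{eq:p_bound}. The semilinear term is handled purely by monotonicity: since $f$ is decreasing with $f(x,0,0)=0$, one has $u\,f(u,\nabla u)\le u\,f(0,\nabla u)\le K|u|\,|\nabla u|$ by the Lipschitz dependence in $z$, so $\int p|u|^{p-2}u\,f(u,\nabla u)\le pK\int|u|^{p-1}|\nabla u|$, and Young's inequality splits this into a piece absorbable into the dissipative integral (with coefficient chosen small relative to $p(p-1)\kappa$) plus a multiple of $\int|u|^p$. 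Crucially this uses \emph{no} Sobolev embedding, which is exactly what permits arbitrary dimension and polynomial growth. The remaining lower-order terms ($b^i\partial_iu$, $cu$, the $\mu^k u$ and cross contributions in the Itô correction) and the forcing terms $f^0,g$ are estimated by Young's inequality, partly absorbed and otherwise controlled by $\int|u|^p+|f^0|^p+|g|_{l^2}^p$.

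To finish I would take the supremum over $t\in[0,T]$ and then expectations, treating the stochastic integral by the Burkholder--Davis--Gundy inequality: its bound is controlled by $\mathbb E[(\int_0^T(\int|u|^{p-1}(|\nabla u|+|u|+|g|_{l^2})\,dx)^2dt)^{1/2}]$, which Young's inequality splits into a small multiple of $\mathbb E\sup_t|u_t|_{L^p}^p$ (absorbed to the left) plus controllable terms, whereupon Gronwall's lemma closes the estimate with $N=N(d,p,K,\kappa,T)$. The main obstacle throughout is the rigorous justification of the Itô formula for the $L^p$-norm under the weak integrability available for $u$; once the cutting argument secures it, the monotonicity of $f$ does the essential work of eliminating the high-growth term, and every remaining contribution is routine.
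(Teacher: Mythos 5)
Your a priori computation is essentially the paper's: the $C^2$ approximation of $r\mapsto|r|^p$ with quadratic tails, the cancellation between the second-order part of $L$ and the It\^o correction via A-\ref{ass:parabolicity} producing the dissipative term $-p(p-1)\kappa\int|u|^{p-2}|\nabla u|^2$, the treatment of the semilinear term by monotonicity plus Lipschitz continuity in $z$ (so that no Sobolev embedding is needed), and Burkholder--Davis--Gundy plus Gronwall to close. The genuine gap is at the very start. You assert that existence follows from ``the standard theory'' on the Gelfand triple $L^\alpha(\mathscr D)\cap H^1_0(\mathscr D)\hookrightarrow L^2(\mathscr D)\hookrightarrow V^*$. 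The variational existence theorems (Pardoux, Krylov--Rozovskii) require coercivity with respect to the full $V$-norm, i.e.\ a term $-\theta|u|_{L^\alpha}^\alpha$ on the right-hand side of the coercivity inequality. Monotonicity of $f$ together with the normalisation $f_t(x,0,0)=0$ only yields $\langle f_t(u,\nabla u),u\rangle\le K|u|_{L^2}|\nabla u|_{L^2}$ and produces no negative $L^\alpha$-contribution (a bounded decreasing $f$ satisfies A-\ref{ass:f} for every $\alpha$), so coercivity in $L^\alpha$ fails and the standard theory does not deliver a solution with the $L^\alpha$-integrability demanded by Definition~\ref{def:solution}. Relatedly, the It\^o formula you invoke for $\int_{\mathscr D}\Phi_m(u)\,dx$ cannot be applied directly to the original equation: the free term $f(u,\nabla u)$ is a priori only in $L^{\alpha/(\alpha-1)}$, not in $L^2$, which is what the It\^o formula of Krylov \cite{krylov13} (and the variational one) requires of the non-divergence part of the drift.

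The paper closes this gap with a layer your proposal omits: it first truncates the nonlinearity to the bounded functions $f^m$ (cutting in $r$ at level $m$), so that the truncated equation \eqref{eq:cut} lives on $H_0^1(\mathscr D)\hookrightarrow L^2(\mathscr D)\hookrightarrow H^{-1}(\mathscr D)$ and existence of $u^m$ is indeed standard. Your estimate is then carried out for $u^m$, with the smooth approximation of $|r|^p$ on top --- hence the double limit in Lemmas~\ref{lem:bound_with_m} and~\ref{lem:cut bound} --- giving the bound \eqref{eq:m,p_bound} uniformly in $m$. Existence of $u$ and the identification $f'=f(u,\nabla u)$ are then obtained by weak compactness and Minty's monotonicity trick (Lemma~\ref{lem:weak limits} and the proof of Theorem~\ref{thm:sol_bound}), and \eqref{eq:p_bound} follows by weak lower semicontinuity. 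Without this truncation-and-passage-to-the-limit step, neither the existence claim nor the application of the It\^o formula in your argument is justified.
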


The rest of Section~\ref{sec: Lp} is devoted to proving Theorem~\ref{thm:sol_bound} but we give a brief outline of the proof here.
\begin{enumerate}
\item We replace the semilinear term $f$ 
by truncations $f^m$, depending on some $m\in \mathbb{N}$,
chosen in such a way that that the monotonicity is preserved and $f^m$ are bounded. 
By standard theory of stochastic evolution equations we obtain $u^m$ which 
are solutions to the SPDE with $f$ replaced with $f^m$.
\item We now wish to get the estimate~\eqref{eq:p_bound} for these $u^m$ (uniformly in $m$). 
If we were allowed to apply It\^o's formula directly to $r \mapsto |r|^p$ and 
the process $u^m_t(x)$ and to integrate over $\mathscr{D}$ then~\eqref{eq:p_bound} for $u^m$ would follow from A-\ref{ass:boundedness}, A-\ref{ass:parabolicity} 
and A-\ref{ass:f}.   
\item Since, of course, this is not allowed we instead consider an appropriate bounded
smooth approximation $\phi_n$ to $r\mapsto |r|^p$ and use the It\^o formula from Krylov~\cite{krylov13}.
We then establish an estimate similar to~\eqref{eq:p_bound} but for $\phi_n(u^m)$
instead of $|u^m|^p$ 
and with the right-hand-side still depending on $m$ but independent of $n$.
See Lemma~\ref{lem:bound_with_m}. 
This allows us to take the limit $n\to \infty$ and to use the monotonicity
of $r\mapsto f^m_t(x,r,z)$ to obtain~\eqref{eq:p_bound} for $u^m$.
See Lemma~\ref{lem:cut bound}.
\item The final step is then to use compactness argument to obtain $u$ as a 
weak limit of $(u^m)_{m\in \mathbb{N}}$, see Lemma~\ref{lem:weak limits}, and the usual monotonicity argument
to show that $u$ satisfies~\eqref{eq:spde}.
Fatou's lemma will then yield~\eqref{eq:p_bound} for $u$.	
\end{enumerate}

Before proceeding with the proof of Theorem~\ref{thm:sol_bound}, we observe the following:

\begin{remark} \label{rem:coercivity}
Assumptions A-\ref{ass:boundedness} and A-\ref{ass:parabolicity} imply, after some computations using H\"older's and Young's inequalities, the existence of a constant $K'$ depending on $K,d$ and $\kappa$ only such that almost surely for all $t\in[0,T]$ and  $w,w' \in H_0^1(\mathscr{D})$, 

\begin{equation*}
2 \langle L_tw+f_t^0,w\rangle + \sum_{k\in\mathbb{N}}|M_t^k w + g_t^k|^2_{L^2}+\kappa |w|^2_{H_0^1} \leq K'\Big[|f_t^0|_{L^2}^2+\big||g_t|_{\ell^2}\big|^2_{L^2}+|w|^2_{L^2} \Big]
\end{equation*}
and
\begin{equation*}
 2 \langle L_tw-L_tw',w-w'\rangle + \sum_{k\in\mathbb{N}}|M_t^kw-M_t^kw'|^2_{L^2}+\kappa |w-w'|^2_{H_0^1}\leq K'|w-w'|^2_{L^2} \,\,.
\end{equation*}
\end{remark}

\begin{lemma}[Uniqueness]\label{lem:uniqueness}
The solution to~\eqref{eq:spde} is unique in the sense that if
$u$ and $\bar{u}$ both satisfy~\eqref{eq:spde} then
\[
\mathbb{P}\Big(\sup_{t\leq T}|u_t - \bar{u}_t|_{L^2}=0\Big) = 1.
\]	
\end{lemma}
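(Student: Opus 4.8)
The plan is to apply It\^o's formula for the square of the $L^2$-norm to the difference $v := u - \bar{u}$ of two solutions and to exploit the monotonicity built into Assumption~A-\ref{ass:f} together with the coercivity estimate of Remark~\ref{rem:coercivity}. Since $u$ and $\bar u$ share the same initial datum $\phi$ and the same free terms $f^0$ and $g$, these cancel in the difference, so $v_0 = 0$ and
\begin{equation*}
dv_t = \big(L_t v_t + f_t(u_t,\nabla u_t) - f_t(\bar u_t, \nabla \bar u_t)\big)\,dt + \sum_{k\in\mathbb{N}} M_t^k v_t\, dW_t^k.
\end{equation*}
By Definition~\ref{def:solution} both $u$ and $\bar u$ lie in $L^\alpha(\mathscr{D})\cap H_0^1(\mathscr{D})$ for $dt\times\mathbb{P}$-a.e. $(t,\omega)$, and the growth bound in A-\ref{ass:f} gives $f_t(u_t,\nabla u_t), f_t(\bar u_t,\nabla \bar u_t)\in L^{\alpha/(\alpha-1)}(\mathscr{D})$; hence $v$ is an It\^o process in the Gelfand triple $H_0^1\cap L^\alpha \hookrightarrow L^2 \hookrightarrow (H_0^1\cap L^\alpha)^*$, and the It\^o formula from Krylov~\cite{krylov13} applies to $t\mapsto |v_t|^2_{L^2}$.

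Applying it and using $v_0 = 0$ yields, almost surely,
\begin{equation*}
|v_t|_{L^2}^2 = \int_0^t \Big(2\langle L_s v_s, v_s\rangle + \sum_{k\in\mathbb{N}}|M_s^k v_s|_{L^2}^2\Big)ds + 2\int_0^t \langle f_s(u_s,\nabla u_s) - f_s(\bar u_s,\nabla\bar u_s), v_s\rangle ds + m_t,
\end{equation*}
where $m_t := 2\sum_{k\in\mathbb{N}} \int_0^t (v_s, M_s^k v_s)_{L^2}\,dW_s^k$ is a local martingale. By linearity of $L$ and $M$ the second inequality of Remark~\ref{rem:coercivity}, applied with $w=u_s$ and $w'=\bar u_s$, bounds the first integrand by $K'|v_s|_{L^2}^2 - \kappa|v_s|_{H_0^1}^2$. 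For the semilinear term I would split
\begin{equation*}
f_s(u_s,\nabla u_s) - f_s(\bar u_s,\nabla\bar u_s) = \big(f_s(u_s,\nabla u_s) - f_s(\bar u_s,\nabla u_s)\big) + \big(f_s(\bar u_s,\nabla u_s) - f_s(\bar u_s,\nabla\bar u_s)\big),
\end{equation*}
estimate the first bracket pointwise in $x$ by the monotonicity $(r-r')(f(x,r,z)-f(x,r',z))\le K|r-r'|^2$ from A-\ref{ass:f}, and the second by the Lipschitz bound $|f(x,r,z)-f(x,r,z')|\le K|z-z'|$ followed by Young's inequality, producing a term $\tfrac{\kappa}{2}|\nabla v_s|_{L^2}^2$ plus a multiple of $|v_s|_{L^2}^2$. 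The gradient contributions are then absorbed by the $-\kappa|v_s|_{H_0^1}^2$ coming from the coercivity estimate (using $|\nabla v_s|_{L^2}^2\le |v_s|_{H_0^1}^2$), leaving
\begin{equation*}
|v_t|_{L^2}^2 \le C\int_0^t |v_s|_{L^2}^2\,ds + m_t
\end{equation*}
for a constant $C = C(K,K',\kappa)$.

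To conclude, I would take expectations (after a standard localisation removing the local-martingale term) to obtain $\mathbb{E}|v_t|_{L^2}^2 \le C\int_0^t \mathbb{E}|v_s|_{L^2}^2\,ds$, whence Gronwall's inequality gives $\mathbb{E}|v_t|_{L^2}^2 = 0$ for each $t\in[0,T]$. Since $u$ and $\bar u$ are continuous $L^2(\mathscr{D})$-valued processes, so is $v$; applying this identity along a countable dense set of times and using continuity upgrades the pointwise-in-time statement to $\mathbb{P}(\sup_{t\le T}|v_t|_{L^2} = 0)=1$. The main obstacle is the second step, namely the rigorous justification that the It\^o formula for $|v_t|^2_{L^2}$ is available despite the arbitrary polynomial growth of $f$: this rests on checking that the difference of the semilinear terms lies in the dual space $(H_0^1\cap L^\alpha)^*$ for a.e. $(t,\omega)$, which follows from the integrability of $u,\bar u$ in Definition~\ref{def:solution} together with the growth bound in A-\ref{ass:f}. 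Everything else is a routine combination of monotonicity, Young's inequality and Gronwall's lemma.
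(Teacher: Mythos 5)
Your proposal is correct and follows essentially the same route as the paper: the same splitting of $f(u,\nabla u)-f(\bar u,\nabla\bar u)$ into a monotone part and a Lipschitz-in-$z$ part, the same Young-inequality absorption of the gradient term into the coercivity of Remark~\ref{rem:coercivity}, and the same justification of It\^o's formula for the square of the $L^2$-norm via the variational framework (the paper cites Gy\"ongy--\v{S}i\v{s}ka~\cite{gyongy17} and Pardoux~\cite{pardoux75} for this step rather than~\cite{krylov13}). The only divergence is in the finish: the paper multiplies by $e^{-K''t}$ so that the drift becomes nonpositive and concludes from the fact that the remaining bound is a nonnegative local martingale (hence a supermartingale) started at zero, whereas you localise and apply Gronwall's lemma --- both are standard and equally valid here.
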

\begin{proof}
Let $u$ and $\bar{u}$ be  two solutions of \eqref{eq:spde} in the sense of Definition \ref{def:solution}. Then,
\begin{equation}\label{eq:unique}
\begin{split}
u_t - \bar u_t =\int_0^t \left(L_s(u_s)-L_s(\bar{u}_s)+f_s(u_s, \nabla u_s)-f_s(\bar{u}_s, \nabla \bar{u}_s)\right)\,ds  \\
+\sum_{k\in\mathbb{N}} \int_0^t \left(M_s^k(u_s)-M_s^k(\bar{u}_s)\right)\,dW_s^k  
\end{split}
\end{equation}
almost surely for all $ t \in [0,T]$.
Using Remark \ref{rem:decrease}, Assumption A-\ref{ass:f} and Young's inequality, we get
\begin{equation}\label{eq:lipschitz}
\begin{split}
&\langle f_t(u_t, \nabla u_t)-f_t(\bar{u}_t, \nabla \bar{u}_t),u_t-\bar{u}_t\rangle \\
&=\langle f_t(u_t, \nabla u_t)-f_t(\bar{u}_t, \nabla u_t)+ f_t(\bar{u}_t, \nabla u_t)-f_t(\bar{u}_t, \nabla \bar{u}_t),u_t-\bar{u}_t\rangle \\
&\leq \frac{\kappa}{2} |\nabla(u_t-\bar{u}_t)|^2_{L^2}+N|u_t-\bar{u}_t|^2_{L^2} \, .
\end{split}
\end{equation}
Using the product rule and applying It\^o's formula for the the square of the norm to~\eqref{eq:unique}, see Gy\"ongy and \v{S}i\v{s}ka~\cite{gyongy17} or Pardoux~\cite[Chapitre 2, Theoreme 5.2]{pardoux75}, we obtain
\begin{equation} \label{eq:ito_product}
\begin{split}
d&\Big(e^{-K''t}|u_t-\bar{u}_t|_{L^2}^2 \Big)
= e^{-K''t}\big[d|u_t-\bar{u}_t|_{L^2}^2-K''|u_t-\bar{u}_t|_{L^2}^2\,dt\big]\\
& = e^{-K''t}\bigg[\Big(2\langle L_t(u_t)-L_t(\bar{u}_t)+f_t(u_t, \nabla u_t)-f_t(\bar{u}_t, \nabla \bar{u}_t),u_t-\bar{u}_t\rangle \\
& \quad +\sum_{k\in\mathbb{N}}|M_t^k(u_t) - M_t^k(\bar{u}_t)|_{L^2}^2 -K''|u_t-\bar{u}_t|_{L^2}^2\Big)\,dt  \\
& \quad+ \sum_{k\in\mathbb{N}} 2\big(u_t-\bar{u}_t,M_t^k(u_t)-M_t^k(\bar{u}_t)\big) dW_t^k \bigg]
\end{split}     
\end{equation}
almost surely for all $ t \in [0,T]$. 
Substituting \eqref{eq:lipschitz} in \eqref{eq:ito_product} and using Remark \ref{rem:coercivity}, we get 
\begin{equation*}
e^{-K''t}|u_t-\bar{u}_t|_{L^2}^2 \leq 2\sum_{k\in\mathbb{N}}\int_0^t e^{-K''s}\big( u_s-\bar{u}_s,M_s^k(u_s)-M_s^k(\bar{u}_s)\big) dW_s^k 
\end{equation*}
implying that right hand side is a non-negative local martingale (and thus a super-martingale) starting from $0$ and hence for all $ t \in [0,T]$,
\begin{equation*}
\mathbb{E}[e^{-K't}|u_{t}-\bar{u}_{t}|_{L^2}^2]\leq 0.
\end{equation*} 
Thus for all $ t \in [0,T]$, we get  $\mathbb{P}(|u_t - \bar u_t|_{L^2}^2=0)=1$ which, along with the continuity of $u-\bar u$ in ${L^2(\mathscr{D})}$, concludes the 
proof.
\end{proof}

Having proved uniqueness we start preparing the proof of Theorem~\ref{thm:sol_bound}.
For $m\in \mathbb{N}$, consider the truncated function
\[
f_t^{m}(x,r,z)=\left\{
                \begin{array}{ll}
                  f_t(x,-m,z)\,\,\, \text{if}\,\,\,r<-m  \\                
                  f_t(x,r,z)\,\,\,\,\,\,\,\, \text{if}\,\,\,-m\leq r \leq m\\
                  f_t(x,m,z)\,\,\,\,\,\, \text{if}\,\,\,r>m,
                \end{array}
              \right.
\]
and the equation
\begin{equation}
\label{eq:cut}
\begin{split}
du_t^{m} & =(L_t u_t^{m}+f_t^{m}(u_t^{m},\nabla u_t^{m})+f_t^0)dt+\sum_{k\in \mathbb{N}}(M_t^ku_t^{m}+g_t^k)dW_t^k, \\ 
u_t^m & =0 \,\,\, \text{on}\,\,\, \partial \mathscr{D}, \quad u_0^m =\phi \,\,\, \text{on}\,\,\, \mathscr{D}.
\end{split}
\end{equation}

For each $m\in \mathbb{N}$, using Assumption A-\ref{ass:f},  $f_t^{m}(x,r,z )$ is bounded 
and hence \eqref{eq:cut} can be viewed as a SPDE  
on the Gelfand triple 
$H_0^1(\mathscr{D}) \hookrightarrow~ L^2(\mathscr{D}) \hookrightarrow~ H^{-1}(\mathscr{D})$
and all the conditions for existence and uniqueness of solution in~\cite{krylov81}
are satisfied. 
Thus \eqref{eq:cut} has a unique $L^2$-solution in the sense of  \cite[Definition 2.2]{krylov81}. 

We now prove an estimate similar to~\eqref{eq:p_bound}
for the solutions of~\eqref{eq:cut}.
We will do this by applying the It\^o formula from Krylov~\cite{krylov13}
similarly to Dareiotis and Gerencs\'er~\cite{mate15}.
To that end we need to consider the functions
\[
\phi_n(r)=\left\{
\begin{array}{lll}
|r|^p & \text{if} & |r|< n  \\                
n^{p-2}\frac{p(p-1)}{2}(|r|-n)^2+pn^{p-1}(|r|-n)+n^p & \text{if} & |r|\geq n.
\end{array}
\right.
\]
We now collect some key properties of these functions.
We see that $\phi_n$ are twice continuously differentiable and 
\[
|\phi_n(x)|\leq C|x|^2,\,\,|\phi_n'(x)|\leq C|x|,\,\,|\phi_n''(x)|\leq C
\]
where $C$ depends on $p$ and $n \in \mathbb{N}$ only. Further, for any $r\in \mathbb{R}$,
\begin{equation}\label{eq:phi_n_limit}
\phi_n(r)\to |r|^p, \,\,\phi_n'(r)\to p|r|^{p-2}r, \,\,\phi_n''(r)\to p(p-1)|r|^{p-2}
\end{equation}
as $n \to \infty$ and
\begin{equation} \label{eq:phi_n}
\phi_n(r)\leq C|r|^p, \,\, \phi_n'(r)\leq C|r|^{p-1}, \,\,\phi_n''(r)\leq C|r|^{p-2}, \,\,
\end{equation}
where $C$ depends on $p$ only. 

\begin{remark}
\label{rem:phi_inequalities} 
For any $r\in \mathbb{R}$ we have	
\begin{enumerate}[(a)]
\item $ |r\phi_n'(r)|\leq p\phi_n(r) $,
\item $|r^2\phi_n''(r)|\leq p(p-1)\phi_n(r)$,
\item $|\phi_n'(r)|^2\leq 4p\phi_n''(r)\phi_n(r)$,
\item $|\phi_n''(r)|^\frac{p}{p-2}\leq [p(p-1)]^\frac{p}{p-2}\phi_n(r) $.
\end{enumerate}
These inequalities along with Young's inequality imply, for any $\epsilon > 0$,
\begin{enumerate}[(i)]
 \item $ |u_s^m\phi_n'(u_s^m)|\leq C\phi_n(u_s^m) $,
 \item $|u_s^m|^2\phi_n''(u_s^m)\leq C\phi_n(u_s^m)$,
 \item $\sum_{i=1}^d \partial_iu_s^m \phi_n'(u_s^m)\leq \epsilon \phi_n''(u_s^m)|\nabla u_s^m|^2+ C \phi_n(u_s^m)$,
 \item  $|f_s^0\phi_n'(u_s^m)|\leq C|f_s^0|[\phi_n''(u_s^m)]^\frac{1}{2}[\phi_n(u_s^m)]^\frac{1}{2}\leq C|f_s^0|^p+C\phi_n(u_s^m)$,
\item  $|f_s^m(u_s^m, \nabla u_s^m)\phi_n'(u_s^m)|\leq C|f_s^m(u_s^m, \nabla u_s^m)|[\phi_n''(u_s^m)]^\frac{1}{2}[\phi_n(u_s^m)]^\frac{1}{2}\\ 
\indent  \leq C|f_s^m(u_s^m, \nabla u_s^m)|^p+C\phi_n(u_s^m)\leq C|f_s(-m, \nabla u_s^m)|^p+C\phi_n(u_s^m)$, 
\item $|g_s|_{\ell^2}^2\phi_n''(u_s^m)
\leq C\phi_n(u_s^m) + C |g_s|_{\ell^2}^p$,
\end{enumerate}
where the last inequality is obtained using H\"older's inequality and $C$ depends only on $d,p$ and $\epsilon$.  
\end{remark}

Using Theorem 3.1 from~\cite{krylov13}, we get that almost surely 
\begin{equation*}
\begin{split}
\int_\mathscr{D}&\phi_n(u_t^m)dx \\
= & \int_\mathscr{D}\phi_n(u_0^m)dx+\sum_{k\in \mathbb{N}} \int_0^t\int_\mathscr{D}\Big(\sum_{i=1}^d\sigma_s^{ik}\partial_iu_s^m+\mu_s^ku_s^m+g_s^k\Big)\phi_n'(u_s^m)
dxdW_s^k \\
& + \int_0^t\int_\mathscr{D} \Big(\sum_{i=1}^d b_s^i \partial_iu_s^m + c_s u_s^m + f_s^m(u_s^m, \nabla u_s^m)+f^0_s\Big)\phi_n'(u_s^m)dxds\\
&  - \int_0^t\int_\mathscr{D}\sum_{i,j=1}^da_s^{ij}\partial_iu_s^m \phi_n''(u_s^m)\partial_ju_s^m dxds\\
&  + \frac{1}{2} \int_0^t\int_\mathscr{D}\sum_{k\in\mathbb{N}}\Big|\sum_{i=1}^d \sigma_s^{ik}\partial_iu_s^m+\mu_s^ku_s^m+g_s^k\Big|^2\phi_n''(u_s^m)dxds, 
\end{split}
\end{equation*}
 for any $t\in[0,T]$ and $n\in \mathbb{N}$.
 Thus using Assumptions A-\ref{ass:boundedness}, A-\ref{ass:parabolicity} and Young's inequality for any $\epsilon>0$, we obtain almost surely
\begin{equation}
\label{eq:est_para}
\begin{split}
\int_\mathscr{D} & \phi_n(u_t^m)dx  \leq \int_\mathscr{D}\phi_n(u_0^m)dx+ \mathscr{M}_t^{n,m} \\
&+ \int_0^t\int_\mathscr{D} \Big(\sum_{i=1}^d b_s^i \partial_iu_s^m + c_s u_s^m + f_s^m(u_s^m, \nabla u_s^m)+f^0_s\Big)\phi_n'(u_s^m)dxds \\
& - \int_0^t\int_\mathscr{D}\kappa |\nabla u_s^m|^2 \phi_n''(u_s^m)dxds\\
& + \int_0^t\int_\mathscr{D} \Big( \epsilon |\nabla u_s^m|^2 + C |u_s|^2+ C
|g_s|_{\ell^2}^2\Big)\phi_n''(u_s^m)\,dxds, 
\end{split}
\end{equation}
for any $t\in[0,T]$ and $n\in \mathbb{N}$. Here the generic constant $C$ depends only on $d,K$ and $\epsilon$ and 
\[
\mathscr{M}_t^{n,m}:=\sum_{k\in \mathbb{N}} \int_0^t\int_\mathscr{D}\Big(\sum_{i=1}^d\sigma_s^{ik}\partial_iu_s^m+\mu_s^ku_s^m+g_s^k\Big)\phi_n'(u_s^m)
dxdW_s^k
\]
is a martingale. 

Further, using Burkholder--Davis--Gundy's inequality, Remark \ref{rem:phi_inequalities}(c) and H\"older's inequality, we see that
\begin{equation*}
\begin{split}
&\mathbb{E} \sup_{0\leq t \leq T} |\mathscr{M}_t^{n,m}| \\
&\leq C\mathbb{E}\Bigg(\int_0^T\sum_k\bigg( \int_\mathscr{D}\Big|\sum_{i=1}^d\sigma_s^{ik}\partial_iu_s^m+\mu_s^ku_s^m+g_s^k\Big|\Big(\phi_n''(u_s^m)\phi_n(u_s^m)\Big)^\frac{1}{2}dx\bigg)^2ds\Bigg)^\frac{1}{2}\\
& \leq C\mathbb{E}\Bigg(\int_0^T\sum_k\bigg( \int_\mathscr{D}\Big|\sum_{i=1}^d\sigma_s^{ik}\partial_iu_s^m+\mu_s^ku_s^m+g_s^k\Big|^2\phi_n''(u_s^m)dx\int_\mathscr{D}\phi_n(u_s^m)dx\bigg)ds\Bigg)^\frac{1}{2}
\end{split}
\end{equation*}
which, using the same steps as before, in particular Remark~\ref{rem:phi_inequalities} points (ii) and (iv), gives
\begin{equation}
\label{eq:BDG}
\begin{split}
&\mathbb{E} \sup_{0\leq t \leq T} |\mathscr{M}_t^{n,m}| \\
&\leq C\mathbb{E}\Bigg(\int_0^T\bigg( \int_\mathscr{D}\Big(|\nabla u_s^m|^2+|u_s^m|^2+|g_s|_{\ell^2}^2\Big)\phi_n''(u_s^m)dx\int_\mathscr{D}\phi_n(u_s^m)dx\bigg)ds\Bigg)^\frac{1}{2}\\
&\leq C\mathbb{E}\Bigg(\sup_{0\leq t\leq T}\int_\mathscr{D}\phi_n(u_t^m)dx\int_0^T \int_\mathscr{D}\Big[|\nabla u_s^m|^2\phi_n''(u_s^m)+\phi_n(u_s^m)
+|g_s|_{\ell^2}^p\Big]dxds\Bigg)^\frac{1}{2} \\
&\leq \frac{1}{2} \mathbb{E}\sup_{0\leq t\leq T}\int_\mathscr{D}\phi_n(u_t^m)dx + C \mathbb{E}\int_0^T \int_\mathscr{D}\Big[|\nabla u_s^m|^2\phi_n''(u_s^m)+\phi_n(u_s^m)+|g_s|_{\ell^2}^p\Big]dxds
\end{split}
\end{equation}

\begin{lemma}
\label{lem:bound_with_m}
If $u^m$ is the solution to~\eqref{eq:cut}, then 
\begin{equation}
\label{eq:bound_with_m}
\begin{split}
\mathbb{E}\sup_{0\leq t\leq T}|u_t^{m}|_{L^p}^p &+ \mathbb{E}\int_0^t\int_\mathscr{D}|\nabla u_s^m|^2|u_s^m|^{p-2}dxds \\
& \leq C\mathbb{E}\Big(|\phi|_{L^p}^p+ C_m+\Vert f^0 \Vert _{L^p}^p+\Vert|g|_{\ell^2}\Vert^p_{L^p}\Big),
\end{split}
\end{equation}
where $C=C(d,K,\kappa, p)$ and $C_m:=\mathbb{E}\int_0^T\int_{\mathscr{D}}(1+|m|)^{\alpha(p-1)}dxds$ are constants. 
\end{lemma}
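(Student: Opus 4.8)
The plan is to derive \eqref{eq:bound_with_m} by taking expectations and a supremum in the It\^o formula \eqref{eq:est_para}, controlling each term on the right-hand side using the collected inequalities in Remark~\ref{rem:phi_inequalities}, and then passing to the limit $n \to \infty$ via the convergences \eqref{eq:phi_n_limit}. The crucial observation that makes this work is that the quadratic-in-gradient term $-\kappa \int_\mathscr{D} |\nabla u_s^m|^2 \phi_n''(u_s^m)\,dx$ carries a good sign (since $\phi_n'' \geq 0$), so it can absorb the bad gradient contributions coming from the first-order terms and from the stochastic correction, provided $\epsilon$ is chosen small enough relative to $\kappa$.

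First I would take $\mathbb{E}\sup_{0 \le t \le T}$ in \eqref{eq:est_para}. The martingale term $\mathscr{M}_t^{n,m}$ is handled by the Burkholder--Davis--Gundy estimate \eqref{eq:BDG}, which already produces $\tfrac12 \mathbb{E}\sup_{t}\int_\mathscr{D}\phi_n(u_t^m)\,dx$ on the right; this half can be absorbed into the left-hand side. Next I would bound the first-order drift integrand: writing out $\sum_i b_s^i \partial_i u_s^m \phi_n'(u_s^m)$, $c_s u_s^m \phi_n'(u_s^m)$, $f_s^0 \phi_n'(u_s^m)$ and $f_s^m(u_s^m, \nabla u_s^m)\phi_n'(u_s^m)$, each is controlled using points (i), (iii), (iv) and (v) of Remark~\ref{rem:phi_inequalities} respectively. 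The key point is that (iii) turns the $\partial_i u_s^m \phi_n'(u_s^m)$ cross-terms into $\epsilon \phi_n''(u_s^m)|\nabla u_s^m|^2 + N\phi_n(u_s^m)$, and (v) bounds the semilinear contribution by $N|f_s(-m,\nabla u_s^m)|^p + N\phi_n(u_s^m) \le N(1+|m|)^{\alpha(p-1)} + N\phi_n(u_s^m)$, which is exactly where the constant $C_m$ enters. Collecting all the $\epsilon$-weighted gradient terms together with the $\epsilon |\nabla u_s^m|^2 \phi_n''(u_s^m)$ term already present in \eqref{eq:est_para}, I choose $\epsilon$ so small that their sum is dominated by the good term $\kappa |\nabla u_s^m|^2 \phi_n''(u_s^m)$, leaving a strictly positive multiple of $\int_0^T\int_\mathscr{D}|\nabla u_s^m|^2 \phi_n''(u_s^m)\,dx\,ds$ on the left.

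After absorbing, the remaining right-hand side is of the form
\begin{equation*}
N\mathbb{E}\int_\mathscr{D}\phi_n(u_0^m)\,dx + N\mathbb{E}\int_0^T\int_\mathscr{D}\phi_n(u_s^m)\,dx\,ds + N\mathbb{E}\big(C_m + \Vert f^0 \Vert_{L^p}^p + \Vert |g|_{l^2}\Vert_{L^p}^p\big),
\end{equation*}
and I would close the estimate by Gronwall's lemma applied to $t \mapsto \mathbb{E}\sup_{s \le t}\int_\mathscr{D}\phi_n(u_s^m)\,dx$, using $\phi_n(u_0^m) \le N|\phi|^p$ from \eqref{eq:phi_n}. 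This yields a bound uniform in $n$ (the constants $N$ in \eqref{eq:phi_n} and Remark~\ref{rem:phi_inequalities} depend only on $d,p,\epsilon$, hence on $d,p,K,\kappa$ after fixing $\epsilon$). Finally I would let $n \to \infty$: by \eqref{eq:phi_n_limit} we have $\phi_n(u_t^m)\to |u_t^m|^p$ and $\phi_n''(u_s^m)\to p(p-1)|u_s^m|^{p-2}$ pointwise, and Fatou's lemma on the left-hand side (together with dominated or monotone convergence arguments, justified by the uniform bounds $\phi_n \le N|r|^p$, $\phi_n'' \le N|r|^{p-2}$ and the finiteness coming from the $L^2$-solution theory for \eqref{eq:cut}) delivers \eqref{eq:bound_with_m}.

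The main obstacle is the absorption bookkeeping: one must verify that after applying (iii) to every first-order term and after the BDG step, the total coefficient multiplying $\int\int |\nabla u_s^m|^2\phi_n''(u_s^m)$ on the right is genuinely smaller than $\kappa$, so that $\epsilon$ can be fixed (depending only on $K,d,\kappa$) before Gronwall is applied; a careless choice would leave the constant $N$ depending on $n$ or fail to produce the good gradient term on the left. The limit $n\to\infty$ also requires care, since it is the monotone/dominated passage under the integral sign — not Fatou alone — that is needed for the gradient term, but this is routine given the pointwise limits \eqref{eq:phi_n_limit} and the $n$-uniform bounds.
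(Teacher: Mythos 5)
Your ingredients are the right ones (the It\^o formula \eqref{eq:est_para}, the pointwise inequalities of Remark~\ref{rem:phi_inequalities}, the BDG bound \eqref{eq:BDG}, Gronwall, and Fatou), but the order in which you deploy them creates a genuine gap at the absorption step --- precisely the step you flag as ``the main obstacle''. The estimate \eqref{eq:BDG} produces on the right-hand side the term $N\,\mathbb{E}\int_0^T\!\!\int_\mathscr{D}|\nabla u_s^m|^2\phi_n''(u_s^m)\,dx\,ds$ with a \emph{fixed} constant $N$ coming from the BDG constant and the Young splitting $N\sqrt{AB}\le \tfrac12 A+\tfrac{N^2}{2}B$; you cannot make this coefficient small without simultaneously inflating the coefficient of $\mathbb{E}\sup_t\int\phi_n(u_t^m)\,dx$ above $1$, which would destroy the absorption of the supremum term. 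Since $\kappa$ may be arbitrarily small, this gradient contribution cannot be dominated by the good term $-\kappa\int_0^t\!\int|\nabla u_s^m|^2\phi_n''\,dx\,ds$, and your plan of choosing $\epsilon$ small only disposes of the $\epsilon$-weighted gradient terms from the drift, not of this one. (A secondary point: after taking $\sup_t$ of an inequality of the form $A_t\le B_t-\kappa G_t$ with $G_t$ nondecreasing, the good gradient term does not simply survive on the left of the supremum estimate anyway.)

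The paper resolves this with a two-pass argument that you should adopt: first take expectations in \eqref{eq:est_para} \emph{without} the supremum (so no martingale estimate is needed), apply Remark~\ref{rem:phi_inequalities}(iv),(v) and Gronwall to obtain \eqref{eq:after_gronwall}, which in particular yields $\mathbb{E}\int_0^T\!\!\int_\mathscr{D}|\nabla u_s^m|^2\phi_n''(u_s^m)\,dx\,ds\le N\,\mathbb{E}\mathcal{K}_T^m$ with $N$ independent of $n$; only then take the supremum in \eqref{eq:est_para}, apply \eqref{eq:BDG}, and control the offending gradient term on the right by the bound just established rather than by absorption into the $\kappa$-term. With that reordering the rest of your argument (the treatment of the drift via points (i), (iii)--(v), the appearance of $C_m$ through $|f_s(-m,\nabla u_s^m)|^p\le N(1+m)^{\alpha(p-1)}$, and the passage $n\to\infty$ via \eqref{eq:phi_n_limit} and Fatou) goes through as in the paper.
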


\begin{proof}
From  ~\eqref{eq:est_para} and Remark~\ref{rem:phi_inequalities}(iv),(v) and Assumption A-\ref{ass:f}, we get 
\begin{equation*}
\begin{split}
\mathbb{E}\int_\mathscr{D}\phi_n&(u_t^m)dx+\frac{\kappa}{2}\mathbb{E}\int_0^t\int_\mathscr{D}|\nabla u_s^m|^2\phi_n''(u_s^m)dxds \leq C\mathbb{E}\int_\mathscr{D}\phi_n(u_0^m)dx + C_m\\
&  + \mathbb{E}\int_0^t\int_\mathscr{D}|f_s^0|^pdxds  + C\mathbb{E}\int_0^t\int_\mathscr{D}|g_s|_{\ell^2}^p dxds+ C\int_0^t\mathbb{E}
\int_\mathscr{D}\phi_n(u_s^m)dxds\\
\leq & C\mathbb{E}\mathcal{K}_t^m+C\int_0^t\mathbb{E}
\int_\mathscr{D}\phi_n(u_s^m)dxds,
\end{split}
\end{equation*}
where $C=C(d,p,K,\epsilon)$ and 
\[
\mathcal{K}_t^m:=\int_\mathscr{D}|\phi|^pdx+ C_m +\int_0^t\int_\mathscr{D}|f_s^0|^pdxds+\int_0^t\int_\mathscr{D}|g_s|_{\ell^2}^p\,dxds.
\]
Applying Gronwall's lemma, we obtain for any $t\in[0,T]$
\begin{equation} 
\label{eq:after_gronwall}
\mathbb{E}\int_\mathscr{D}\phi_n(u_t^m)dx+\mathbb{E}\int_0^t\int_\mathscr{D}|\nabla u_s^m|^2\phi_n''(u_s^m)dxds \leq C\mathbb{E}\mathcal{K}_t^m
\end{equation}
where  $C=C(d,p,K,\kappa,T)$.

Further, taking the supremum over $t\in[0,T]$ in \eqref{eq:est_para}, using the same estimates as given above  and then taking expectation, we get using \eqref{eq:BDG}
\begin{equation*}
\begin{split}
\mathbb{E}&\sup_{0\leq t \leq T}\int_\mathscr{D}\phi_n(u_t^m)dx \\
 \leq & C\mathbb{E}\int_\mathscr{D}\phi_n(u_0^m)dx+ \mathbb{E}\sup_{0\leq t\leq T}\int_0^t\int_\mathscr{D}f_s^m(u_s^m, \nabla u_s^m)\phi_n'(u_s^m)dxds  \\
& +C\mathbb{E}\int_0^T\int_\mathscr{D}|f_s^0|^pdxds+C\mathbb{E}\int_0^T\int_\mathscr{D}|g_s|_{\ell^2}^p\,dxds+C\int_0^T\mathbb{E} \int_\mathscr{D}\phi_n(u_s^m)dxds\\
& + \frac{1}{2} \mathbb{E}\sup_{0\leq t\leq T}\int_\mathscr{D}\phi_n(u_t^m)dx + C\mathbb{E}\int_0^T \int_\mathscr{D}\Big[|\nabla u_s^m|^2\phi_n''(u_s^m)+\phi_n(u_s^m)\Big]dxds \\
\leq & C\mathbb{E}\int_\mathscr{D}\phi_n(u_0^m)dx+ CC_m+C\mathbb{E}\int_0^T\int_\mathscr{D}|f_s^0|^pdxds \\
&  + C\mathbb{E}\int_0^T\int_{\mathscr{D}} \big[|g_s|_{\ell^2}^p + \phi_n(u_s^m)\big]\,dxds\\
&  + \frac{1}{2} \mathbb{E}\sup_{0\leq t\leq T}\int_\mathscr{D}\phi_n(u_t^m)dx +C\mathbb{E}\int_0^T \int_\mathscr{D}|\nabla u_s^m|^2\phi_n''(u_s^m)dxds\\
\leq & C\mathbb{E}\mathcal{K}_T^m+ \frac{1}{2} \mathbb{E}\sup_{0\leq t\leq T}\int_\mathscr{D}\phi_n(u_t^m)dx <\infty
\end{split}
\end{equation*}
where $C$ does not depend on $n$ and $m$. 
Thus, we have
\begin{equation*}\mathbb{E}\sup_{0\leq t \leq T}\int_\mathscr{D}\phi_n(u_t^m)dx+\mathbb{E}\int_0^T\int_\mathscr{D}|\nabla u_s^m|^2\phi_n''(u_s^m)dxds\leq C\mathbb{E}\mathcal{K}_T^m < \infty,
\end{equation*}
where $C=C(d,p,K,\kappa,T)$. Now we let $n\to \infty$ and apply Fatou's lemma to complete the proof.
\end{proof}

We can now use Lemma~\ref{lem:bound_with_m} and the 
monotonicity of $r\mapsto f^m_t(x,r,z)$
to obtain an estimate for $u^m_t$, where the right-hand-side no longer
depends on $m$.
Let
\[
\mathscr{K}_t:=\int_\mathscr{D}|\phi|^p dx
+ \int_0^t\int_\mathscr{D}\big[|f_s^0|^p + |g_s|_{\ell^2}^p\big]\,dxds.
\]

\begin{lemma} \label{lem:cut bound} 
If $u^m$ is the solution to~\eqref{eq:cut} then there is $C=C(d,p,K,\kappa,T)$
such that
\begin{equation} \label{eq:m,p_bound}
\mathbb{E}\sup_{0\leq t \leq T}|u_t^m|_{L^p}^p +\mathbb{E}\int_0^T\int_\mathscr{D}|\nabla u_s^m|^2|u_s^m|^{p-2}\,dxds\leq C\mathbb{E}\mathscr{K}_T.
\end{equation}	
\end{lemma}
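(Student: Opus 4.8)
The plan is to revisit the proof of Lemma~\ref{lem:bound_with_m}, in which the only source of the $m$-dependent constant $C_m$ was the crude bound on the semilinear term coming from the polynomial growth in Assumption~A-\ref{ass:f}. Every estimate in that proof is kept intact except the one for the term
\[
\int_0^t\int_\mathscr{D} f_s^m(u_s^m,\nabla u_s^m)\phi_n'(u_s^m)\,dx\,ds,
\]
which I would now control using monotonicity rather than growth, so that no power of $m$ enters.

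By Remark~\ref{rem:decrease} we may assume that $r\mapsto f_t(x,r,z)$ is decreasing and that $f_t(x,0,0)=0$; the truncation $f^m$ inherits monotonicity in $r$. I would split
\[
f_s^m(u_s^m,\nabla u_s^m)\phi_n'(u_s^m)
= \big[f_s^m(u_s^m,\nabla u_s^m)-f_s^m(0,\nabla u_s^m)\big]\phi_n'(u_s^m)
+ f_s^m(0,\nabla u_s^m)\phi_n'(u_s^m).
\]
Since $\phi_n'(r)$ has the same sign as $r$ and vanishes at $r=0$, while $f^m$ is decreasing in its first argument, the first summand is pointwise non-positive and may be discarded. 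For the second summand we have $f_s^m(0,\nabla u_s^m)=f_s(0,\nabla u_s^m)$, and using $f_s(0,0)=0$ together with the Lipschitz bound in $z$ from A-\ref{ass:f} we get $|f_s(0,\nabla u_s^m)|\leq K|\nabla u_s^m|$. Combining this with Remark~\ref{rem:phi_inequalities}(c) and Young's inequality gives, for any $\epsilon>0$,
\[
\big|f_s^m(0,\nabla u_s^m)\phi_n'(u_s^m)\big|
\leq \epsilon\,|\nabla u_s^m|^2\phi_n''(u_s^m) + N\phi_n(u_s^m),
\]
with $N=N(d,p,K,\epsilon)$ independent of $m$ and $n$. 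This is exactly the structure required: the gradient term is absorbed into the parabolic term $\kappa|\nabla u_s^m|^2\phi_n''(u_s^m)$ by choosing $\epsilon$ small (together with the $\epsilon$ already spent on the noise term in \eqref{eq:est_para}, keeping their sum below $\kappa$), while the leftover $N\phi_n(u_s^m)$ is of the kind handled by Gronwall's lemma.

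With this single replacement, the Gronwall and Burkholder--Davis--Gundy arguments of Lemma~\ref{lem:bound_with_m} carry over verbatim, except that $C_m$ never appears; one obtains
\[
\mathbb{E}\sup_{0\leq t\leq T}\int_\mathscr{D}\phi_n(u_t^m)\,dx
+\mathbb{E}\int_0^T\int_\mathscr{D}|\nabla u_s^m|^2\phi_n''(u_s^m)\,dx\,ds
\leq N\mathbb{E}\mathscr{K}_T,
\]
with $N=N(d,p,K,\kappa,T)$ independent of $n$ and $m$. Finally I would let $n\to\infty$ and use \eqref{eq:phi_n_limit}, so that $\phi_n(u_t^m)\to|u_t^m|^p$ and $\phi_n''(u_s^m)\to p(p-1)|u_s^m|^{p-2}$; since all integrands are non-negative, Fatou's lemma gives \eqref{eq:m,p_bound} after absorbing the factor $p(p-1)\geq 1$ into $N$.

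The main obstacle is precisely the sign argument for the semilinear term: one must verify that the monotonicity of $f$ in $r$ and the matching sign of $\phi_n'$ indeed render the $m$-dependent part of $f_s^m(u_s^m,\nabla u_s^m)\phi_n'(u_s^m)$ non-positive, so that only the residual $f_s(0,\nabla u_s^m)$ survives, and that this residual is genuinely controlled linearly in $|\nabla u_s^m|$ via $f_s(0,0)=0$ and Lipschitz continuity in $z$. Once this is in place, the remainder is a rerun of the previous lemma.
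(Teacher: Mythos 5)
Your proof is correct, and it reaches the estimate by a route that differs from the paper's in two respects. First, the decomposition: the paper writes $f^m_s(u^m_s,\nabla u^m_s)\,u^m_s=[f^m_s(u^m_s,\nabla u^m_s)-f^m_s(u^m_s,0)]u^m_s+f^m_s(u^m_s,0)u^m_s$, discards the second term via the sign relation $r f^m_t(r,0)\le 0$ (monotonicity plus $f_t(x,0,0)=0$) and controls the first by Lipschitz continuity in $z$; you instead freeze the $r$-argument at $0$, discard $[f^m_s(u^m_s,\nabla u^m_s)-f^m_s(0,\nabla u^m_s)]\phi_n'(u^m_s)\le 0$ using monotonicity together with the sign of $\phi_n'$, and control the residual $f_s(0,\nabla u^m_s)\phi_n'(u^m_s)$ via $|f_s(0,\nabla u^m_s)|\le K|\nabla u^m_s|$ and Remark~\ref{rem:phi_inequalities}(c). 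The two hypotheses (monotonicity with $f(0,0)=0$, Lipschitz in $z$) are used in swapped roles, and both splittings are valid; your sign argument is sound since $\phi_n'(r)$ vanishes at $0$ and has the sign of $r$, and $f^m_s(0,z)=f_s(0,z)$ because $0\in[-m,m]$. Second, the order of limits: the paper retains the semilinear term through the $n\to\infty$ passage, which it justifies by dominated convergence using the $m$-dependent bound of Lemma~\ref{lem:bound_with_m} as the dominating function, and only then applies monotonicity at the level of $|u^m|^{p-2}u^m$; you perform the sign argument already at the level of $\phi_n'$, obtain a bound uniform in both $n$ and $m$, and pass to the limit by Fatou alone. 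This makes your argument marginally more self-contained --- Lemma~\ref{lem:bound_with_m} is needed only to know that $\mathbb{E}\sup_t\int_{\mathscr{D}}\phi_n(u^m_t)\,dx<\infty$ for each fixed $n$ so that the $\tfrac12$-absorption after Burkholder--Davis--Gundy is legitimate, and for fixed $n$ that already follows from the quadratic bound $\phi_n(r)\le N_n|r|^2$ and the $L^2$-theory --- at the cost of having to track one extra $\epsilon|\nabla u^m_s|^2\phi_n''(u^m_s)$ contribution alongside those already present in \eqref{eq:est_para}, which you correctly note must jointly stay below $\kappa$.
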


\begin{proof}

From  ~\eqref{eq:est_para} and Remark~\ref{rem:phi_inequalities}(iv), we get 
\begin{equation*}
\begin{split}
\mathbb{E}\int_\mathscr{D}\phi_n(u_t^m)& dx+\frac{\kappa}{2}\mathbb{E}\int_0^t\int_\mathscr{D}|\nabla u_s^m|^2\phi_n''(u_s^m)dxds \\
\leq & C\mathbb{E}\int_\mathscr{D}\phi_n(u_0^m)dx+ \mathbb{E}\int_0^t\int_\mathscr{D}\big[f_s^m(u_s^m, \nabla u_s^m)\phi_n'(u_s^m) + |f_s^0|^p\big]\,dxds \\
&  +C\mathbb{E}\int_0^t\int_\mathscr{D}\big[|g_s|_{\ell^2}^p + \phi_n(u_s^m)\big]\,dxds,
\end{split}
\end{equation*}
where $C=C(d,p,K,\kappa)$.

Taking limit $n\to\infty$ and using Lebesgue's dominated convergence theorem in view of \eqref{eq:bound_with_m}, \eqref{eq:phi_n_limit} and \eqref{eq:phi_n}, we get
\begin{equation}
\label{eq:beforeGronwall}
\begin{split}
\mathbb{E}&\int_\mathscr{D}|u_t^m|^pdx+p(p-1)\frac{\kappa}{2}\mathbb{E}\int_0^t\int_\mathscr{D}|\nabla u_s^m|^2|u_s^m|^{p-2}dxds \\
\leq & C\mathbb{E}\mathscr{K}_t+ p\mathbb{E}\int_0^t\int_\mathscr{D}|u_s^m|^{p-2}f_s^m(u_s^m, \nabla u_s^m)u_s^mdxds+C \mathbb{E} \int_0^t
\int_\mathscr{D}|u_s^m|^p dxds. 
\end{split}
\end{equation}
Using the fact $rf_t^m(r,0)\leq 0$ for any $r\in\mathbb{R},m\in\mathbb{N}, t\in[0,T]$, Young's inequality and Assumption A-\ref{ass:f}, we get
\begin{equation*}
\begin{split}
&p\mathbb{E}\int_0^t\int_\mathscr{D}|u_s^m|^{p-2}f_s^m(u_s^m, \nabla u_s^m)u_s^mdxds \\
&= p\mathbb{E}\int_0^t\int_\mathscr{D}|u_s^m|^{p-2} \big[f_s^m(u_s^m, \nabla u_s^m)-f_s^m(u_s^m,0)+f_s^m(u_s^m,0)\big]u_s^mdxds \\
& \leq \mathbb{E}\int_0^t\int_\mathscr{D}|u_s^m|^{p-2} \big[\frac{\kappa}{4}  |f_s^m(u_s^m, \nabla u_s^m)-f_s^m(u_s^m,0)|^2 +C|u_s^m|^2\big]dxds \\
& \leq \frac{\kappa}{4} \mathbb{E}\int_0^t\int_\mathscr{D}|u_s^m|^{p-2} | \nabla u_s^m|^2 dxds + C \mathbb{E}\int_0^t\int_\mathscr{D}|u_s^m|^pdxds
\end{split}
\end{equation*}
Substituting this in \eqref{eq:beforeGronwall} 
and then applying Gronwall's lemma, we obtain for any $t\in[0,T]$
\begin{equation*} 
\mathbb{E}\int_\mathscr{D}|u_t^m|^pdx+\mathbb{E}\int_0^t\int_\mathscr{D}|\nabla u_s^m|^2|u_s^m|^{p-2}dxds\leq C\mathbb{E}\mathscr{K}_t
\end{equation*}
where  $C=C(d,p,K,\kappa,T)$.

Further, taking the supremum over $t\in[0,T]$ in \eqref{eq:est_para}, using the same estimates as given above and then taking expectation, we get using \eqref{eq:BDG}
\begin{equation*}
\begin{split}
\mathbb{E}&\sup_{0\leq t \leq T}\int_\mathscr{D}\phi_n(u_t^m)dx \\
\leq &  C\mathbb{E}\int_\mathscr{D}\phi_n(u_0^m)dx+ \mathbb{E}\sup_{0\leq t\leq T}\int_0^t\int_\mathscr{D}f_s^m(u_s^m, \nabla u_s^m)\phi_n'(u_s^m)dxds  \\
& + C\mathbb{E}\int_0^T\int_\mathscr{D}\big[|f_s^0|^p + |g_s|_{\ell^2}^p + \phi_n(u_s^m)\big]\,dxds\\
& + \frac{1}{2} \mathbb{E}\sup_{0\leq t\leq T}\int_\mathscr{D}\phi_n(u_t^m)dx + C \mathbb{E}\int_0^T \int_\mathscr{D}|\nabla u_s^m|^2\phi_n''(u_s^m)dxds,
\end{split}
\end{equation*}
where $C$ does not depend on $n$ and $m$. 
Taking limit $n\to\infty$ using Lebesgue's dominated convergence theorem and using \eqref{eq:after_gronwall} along with the 
steps as above, we get
\[
\mathbb{E}\sup_{0\leq t \leq T}\int_\mathscr{D}|u_t^m|^pdx\leq C\mathbb{E}\mathscr{K}_T+ \frac{1}{2} \mathbb{E}\sup_{0\leq t\leq T}\int_\mathscr{D}|u_t^m|^pdx 
\]
and hence the lemma.
\end{proof}

To complete the proof of Theorem~\ref{thm:sol_bound} we need to 
take the limit, as $m\to \infty$ in~\eqref{eq:m,p_bound} and  
to show that~\eqref{eq:spde} has a solution.
To that end we obtain the following result.
\begin{lemma}\label{lem:weak limits}
There is a subsequence of $(m)$ denoted by $(m')$ and
an adapted process $u$ such that 
$u \in L^\alpha(\Omega \times (0,T),\mathscr{P};L^\alpha(\mathscr{D})) \cap L^2(\Omega \times (0,T),\mathscr{P};H_0^1(\mathscr{D}))$ and almost surely $u\in C([0,T]; L^2(\mathscr{D}))$.
Moreover, there exists $f'\in L^\frac{\alpha}{\alpha-1}\big(\Omega \times (0,T),\mathscr{P};L^\frac{\alpha}{\alpha-1}(\mathscr{D})\big)$ such that  
\begin{equation*}
\begin{split}
u^{m'}\rightharpoonup u\quad  \text{in}\quad L^\alpha(\Omega \times (0,T),\mathscr{P};L^\alpha(\mathscr{D})) \cap L^2(\Omega \times (0,T),\mathscr{P};H_0^1(\mathscr{D})),
\end{split}
\end{equation*}
\begin{equation*}
f^{m'}(u^{m'}, \nabla u^{m'})\rightharpoonup f' \quad  \text{in}\quad L^\frac{\alpha}{\alpha-1}\big(\Omega \times (0,T),\mathscr{P};L^\frac{\alpha}{\alpha-1}(\mathscr{D})\big),
\end{equation*}
\begin{equation*}
\begin{split}
L(u^{m'})\rightharpoonup L(u)\quad  \text{in}\quad L^2\big(\Omega \times (0,T),\mathscr{P};H^{-1}(\mathscr{D})\big),\\
M(u^{m'})\rightharpoonup M(u) \quad \text{in}\quad L^2\big(\Omega \times (0,T),\mathscr{P};\ell^2(L^2(\mathscr{D}))\big).	
\end{split}
\end{equation*}
Finally for all $t \in [0,T]$, 
\[
u_t=u_0+\int_0^t  (L_s u_s+f'_s+f_s^0)ds +\sum_{k\in\mathbb{N}}\int_0^t (M_s^k u_s+g_s^k)dW_s^k\,\,a.s. 
\] 
and
\[
\begin{split}
|u_t|_{L^2}^2 = & |\psi|_{L^2}^2 
+ 2\int_0^t \langle L_s u_s+f_s^0, u_s \rangle \,ds 
+ 2 \int_0^t \langle f'_s, u_s \rangle\,ds \\
 & + 2 \sum_{k\in\mathbb{N}} \int_0^t (M_s^k u_s+g_s^k, u_s) \,dW^k_s
+ \sum_{k\in\mathbb{N}} \int_0^t |M_s^k u_s+g_s^k|_{L^2}^2\, ds\,.
\end{split}
\]
\end{lemma}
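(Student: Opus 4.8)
The plan is to combine the uniform-in-$m$ estimates of Lemma~\ref{lem:cut bound} with the standard weak-compactness and monotonicity machinery for variational SPDEs, keeping the limit $f'$ of the nonlinear terms as an abstract weak limit (its identification with $f(u,\nabla u)$ is the separate monotonicity argument carried out afterwards).

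\textbf{Step 1: uniform a priori bounds.} From \eqref{eq:m,p_bound} I would first extract three bounds uniform in $m$. Since $\mathscr{D}$ is bounded and $p\geq\alpha$, Hölder's inequality gives $|u_s^m|_{L^\alpha}^\alpha\leq N|u_s^m|_{L^p}^\alpha$, so the $L^p$-supremum bound controls $\mathbb{E}\int_0^T|u_s^m|_{L^\alpha}^\alpha\,ds$. For the $H_0^1$-bound I would apply It\^o's formula to $|u_t^m|_{L^2}^2$ (valid for the $L^2$-solution of the linear equation \eqref{eq:cut}), take expectations, and use Remark~\ref{rem:coercivity} for the linear part together with the now-familiar splitting $f_s^m(u_s^m,\nabla u_s^m)=[f_s^m(u_s^m,\nabla u_s^m)-f_s^m(u_s^m,0)]+f_s^m(u_s^m,0)$: the Lipschitz bound in $z$ absorbs the first bracket into $\tfrac{\kappa}{4}|\nabla u_s^m|_{L^2}^2+N|u_s^m|_{L^2}^2$ and the monotonicity $rf_s^m(r,0)\leq 0$ kills the second. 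Gronwall's lemma then bounds $\mathbb{E}\int_0^T|u_s^m|_{H_0^1}^2\,ds$ uniformly in $m$. Finally, the growth bound in Assumption~A-\ref{ass:f} gives $|f_s^m(u_s^m,\nabla u_s^m)|\leq K(1+|u_s^m|)^{\alpha-1}$, whence $\mathbb{E}\int_0^T|f_s^m(u_s^m,\nabla u_s^m)|_{L^{\alpha/(\alpha-1)}}^{\alpha/(\alpha-1)}\,ds\leq N(1+\mathbb{E}\int_0^T|u_s^m|_{L^\alpha}^\alpha\,ds)$ is bounded by the first step.

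\textbf{Step 2: weak compactness and identification.} For $\alpha>1$ all the spaces above are reflexive, so Banach--Alaoglu yields a subsequence (relabelled $m$) and weak limits $v$, $\bar v$ and $f'$ in $L^\alpha(\Omega\times(0,T),\mathscr{P};L^\alpha(\mathscr{D}))$, $L^2(\Omega\times(0,T),\mathscr{P};H_0^1(\mathscr{D}))$ and $L^{\alpha/(\alpha-1)}(\Omega\times(0,T),\mathscr{P};L^{\alpha/(\alpha-1)}(\mathscr{D}))$ respectively. Because the predictable processes form a closed, hence weakly closed, subspace, these limits are again predictable, so the limit is adapted. Testing $u^m$ against functions in $L^\infty(\Omega\times(0,T);C_0^\infty(\mathscr{D}))$, which lie in both relevant duals, shows $v=\bar v$ almost everywhere; I set $u:=v$, so that $u\in L^\alpha(\Omega\times(0,T),\mathscr{P};L^\alpha(\mathscr{D}))\cap L^2(\Omega\times(0,T),\mathscr{P};H_0^1(\mathscr{D}))$. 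Since $L$ and each $M^k$ are bounded linear operators (from $H_0^1$-valued $L^2$ into $H^{-1}$-valued $L^2$, respectively into $l^2(L^2(\mathscr{D}))$-valued $L^2$), weak convergence is preserved, giving $L(u^m)\rightharpoonup L(u)$ and $M(u^m)\rightharpoonup M(u)$.

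\textbf{Step 3: passing to the limit in the equation.} I would test the weak formulation of \eqref{eq:cut} against $\xi\in C_0^\infty(\mathscr{D})$ times a bounded predictable scalar process and pass $m\to\infty$, using the weak convergences of Step 2. The drift terms pass by linearity and weak convergence, and the nonlinear term converges to the pairing with $f'$. For the stochastic integral I would use that $h\mapsto\sum_k\int_0^\cdot h^k\,dW^k$ is a bounded linear map from $L^2(\Omega\times(0,T),\mathscr{P};l^2(L^2(\mathscr{D})))$ into $L^2(\Omega;C([0,T];L^2(\mathscr{D})))$ (It\^o isometry and the Burkholder--Davis--Gundy inequality), hence weakly continuous, so $M(u^m)+g\rightharpoonup M(u)+g$ carries over to the integrals. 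This identifies $u$ with the stated integral equation, first in tested form and then, by density and the general representation theorem for such processes, as the $L^2(\mathscr{D})$-valued identity for all $t\in[0,T]$ almost surely.

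\textbf{Step 4: energy equality, and the main obstacle.} Working in the Gelfand triple $V:=H_0^1(\mathscr{D})\cap L^\alpha(\mathscr{D})\hookrightarrow L^2(\mathscr{D})\hookrightarrow V^*=H^{-1}(\mathscr{D})+L^{\alpha/(\alpha-1)}(\mathscr{D})$, the drift $L_s u_s+f'_s+f_s^0$ lies in $V^*$ and the diffusion in $l^2(L^2(\mathscr{D}))$, so the It\^o formula for the square of the norm (Gy\"ongy and \v{S}i\v{s}ka~\cite{gyongy17}, Pardoux~\cite{pardoux75}) applies and yields simultaneously $u\in C([0,T];L^2(\mathscr{D}))$ almost surely and the displayed energy equality; the pairing $\langle f'_s,u_s\rangle$ is well defined by the $L^\alpha$--$L^{\alpha/(\alpha-1)}$ duality. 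I expect the main difficulty to be organisational rather than conceptual: securing the uniform $H_0^1$-bound of Step 1 (which is what makes $\bar v$ available) and, above all, verifying that the limiting drift genuinely lands in $V^*$ so that the It\^o formula is legitimately applicable, since it is precisely this energy equality that feeds the subsequent monotonicity argument identifying $f'$ with $f(u,\nabla u)$. The weak continuity of the stochastic integral is the other point requiring care.
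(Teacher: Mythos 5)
Your proposal is correct and follows essentially the same route as the paper: uniform bounds from Lemma~\ref{lem:cut bound} (your explicit $p=2$ It\^o/Gronwall argument for the $H_0^1$ bound is exactly what the paper implicitly invokes), weak compactness by reflexivity, passage to the limit in the tested weak formulation using weak continuity of the Bochner and stochastic integrals, identification $v=\bar v$, and finally the It\^o formula for processes with values in the intersection $H_0^1(\mathscr{D})\cap L^\alpha(\mathscr{D})$ from Gy\"ongy and \v{S}i\v{s}ka to obtain the continuous $L^2(\mathscr{D})$-valued modification and the energy identity. No substantive differences.
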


\begin{proof}

By Lemma \ref{lem:cut bound}, we have $u^{m}\in L^\alpha(\Omega \times (0,T),\mathscr{P};L^\alpha(\mathscr{D}))\cap L^2(\Omega \times (0,T),\mathscr{P};H_0^1(\mathscr{D}))$.
Moreover, using Assumption A-\ref{ass:f} and \eqref{eq:m,p_bound}, we have 

\begin{equation}
\label{eq:bound_f}
\begin{split}
\mathbb{E} \int_0^T \int_{\mathscr{D}}|f^m_t(u^m_t(x), \nabla u^m_t(x))|^\frac{\alpha}{\alpha-1}&\,dxdt \leq K \mathbb{E} \int_0^T \int_{\mathscr{D}}(1+|u_t^m(x)|)^\alpha dxdt \\
& \leq C+C \mathbb{E}\sup_{0\leq t \leq T}\int_\mathscr{D}|u_t^m(x)|^\alpha dx <\infty.
\end{split}
\end{equation}
Thus, $f^m(u^m,\nabla u^m)\in L^\frac{\alpha}{\alpha-1}\big(\Omega \times (0,T),\mathscr{P};L^\frac{\alpha}{\alpha-1}(\mathscr{D})\big)$ 
such that \eqref{eq:m,p_bound} 
and \eqref{eq:bound_f} holds for each $m \in \mathbb{N}$ with a constant independent of $m$.
Since these Banach spaces are reflexive, there exists a subsequence  $(m')$ (see, e.g., Theorem 3.18 in \cite{brezis10}) such that 
\begin{eqnarray*}
 u^{m'}\rightharpoonup v & \text{in} &  L^\alpha(\Omega \times (0,T),\mathscr{P};L^\alpha(\mathscr{D}))\,,\\
u^{m'}\rightharpoonup \bar{v}   & \text{in} & L^2(\Omega \times (0,T),\mathscr{P};H_0^1(\mathscr{D}))\,\,\text{and}\\
f^{m'}(u^{m'},\nabla u^{m'})\rightharpoonup f' & \text{in} &  L^\frac{\alpha}{\alpha-1}\big(\Omega \times (0,T),\mathscr{P};L^\frac{\alpha}{\alpha-1}(\mathscr{D})\big)\,.
\end{eqnarray*}
Moreover, the operators $L$ and $M$ are bounded and linear and hence map a weakly convergent sequence to a weakly convergent sequence. Thus, we have
\begin{eqnarray*}
L(u^{m'})\rightharpoonup L(\bar{v}) & \text{in} & L^2\big(\Omega \times (0,T),\mathscr{P};H^{-1}(\mathscr{D})\big)\,\,\text{and}\\
M(u^{m'})\rightharpoonup M(\bar{v}) & \text{in} & L^2\big(\Omega \times (0,T),\mathscr{P};\ell^2(L^2(\mathscr{D}))\big)\,.
\end{eqnarray*}
 Note that for any adapted and bounded real valued process $\eta_t$ and $\xi \in C_0^\infty(\mathscr{D})$, we have
 \[
 \mathbb{E}\int_0^T\eta_t \langle v_t-\bar{v}_t,\xi \rangle dt=\mathbb{E}\int_0^T\eta_t \langle v_t-u_t^{m'},\xi \rangle dt+\mathbb{E}\int_0^T\eta_t \langle u_t^{m'}-\bar{v}_t,\xi \rangle dt \to 0
 \]
as $m' \to \infty$. Since $C_0^\infty(\mathscr{D})$ is dense in $L^\alpha(\mathscr{D})$ and $H_0^1(\mathscr{D})$, we have the processes $v$ and $\bar{v}$ are equal $dt\times \mathbb{P}$~ almost everywhere. Further, the Bochner integral and the stochastic integral are bounded linear operators and hence are continuous with respect to weak topologies. Again, we have
\begin{equation*}
\begin{split}
&\mathbb{E}\int_0^T\eta_t(u_t^{m'},\xi)dt \\
&=\mathbb{E}\int_0^T\eta_t\Big((u_0^{m'},\xi)+\int_0^t\langle L_su_s^{m'}+f_s^{m'}+f_s^0,\xi\rangle ds +\sum_{k\in\mathbb{N}}\int_0^t(\xi ,M_s^ku_s^{m'}+g_s^k)dW_s^k \Big)dt.
\end{split}
\end{equation*}
On taking limit $m' \to \infty$, we get
\begin{equation*}
\begin{split}
&\mathbb{E}\int_0^T\eta_t(v_t,\xi)dt \\
&=\mathbb{E}\int_0^T\eta_t\Big((u_0,\xi)+\int_0^t\langle L_sv_s+f'_s+f_s^0,\xi\rangle ds +\sum_{k\in\mathbb{N}}\int_0^t(\xi ,M_s^k\textcolor{red}{v}_s+g_s^k)dW_s^k \Big)dt
\end{split}
\end{equation*}
for any adapted and bounded real valued process $\eta_t$ and $\xi \in C_0^\infty(\mathscr{D})$. Since $C_0^\infty(\mathscr{D})$ is dense in $L^\alpha(\mathscr{D})$ and $H_0^1(\mathscr{D})$, we have
\[
v_t=u_0+\int_0^t  (L_sv_s+f'_s+f_s^0)ds +\sum_{k\in\mathbb{N}}\int_0^t (M_s^kv_s+g_s^k)dW_s^k 
\] 
 $dt\times \mathbb{P}$~ almost everywhere. 
Using It\^o formula for processes taking values in intersection of Banach spaces from Gy\"ongy and \v{S}i\v{s}ka \cite{gyongy17}, there exists an $L^2(\mathscr{D})$-valued continuous modification $u$ of $v$  which satisfies above equality almost surely for all $t\in[0,T]$. 
\end{proof}


\begin{remark}\label{rem:strong convergence}
For  $\psi \in L^\alpha(\Omega \times (0,T),\mathscr{P};L^\alpha(\mathscr{D}))\cap L^2(\Omega \times (0,T),\mathscr{P};H_0^1(\mathscr{D})), $ we have 
\[
f^{m'}(\psi,\nabla \psi)\to f(\psi, \nabla \psi)
\]
in $L^\frac{\alpha}{\alpha-1}(\Omega \times (0,T),\mathscr{P};L^\frac{\alpha}{\alpha-1}(\mathscr{D}))$. Indeed, by definition of $f^{m'}$, as $m'\to\infty$
\[
 f_s^{m'}(\psi_s(x),\nabla \psi_s(x))\to f_s(\psi_s(x),\nabla \psi_s(x))\,\,\,
 \forall \omega, s, x\,.
\]
Moreover $|f_s^{m'}(r,z)|\leq |f_s(r,z)|$ and due to Assumption A-\ref{ass:f},
\begin{equation*}
\mathbb{E}\int_0^T|f_s(\psi_s, \nabla \psi_s(x))|_{L^\frac{\alpha}{\alpha-1}}^\frac{\alpha}{\alpha-1}ds
\leq C\mathbb{E}\int_0^T\int_\mathscr{D}\Big(1+|\psi_s(x)|^\alpha \Big) dxds <\infty.
\end{equation*}
Therefore we may use Lebesgue Dominated Convergence Theorem to obtain
\begin{equation*}
\begin{split}
\lim_{m' \to \infty }&\mathbb{E}\int_0^T\int_\mathscr{D}|f_s^{m'}(\psi_s(x),\nabla \psi_s(x))-f_s(\psi_s(x),\nabla \psi_s(x))|^\frac{\alpha}{\alpha-1}dxds \\
&=\mathbb{E}\int_0^T\int_\mathscr{D}\lim_{m' \to \infty }|f_s^{m'}(\psi_s(x),\nabla \psi_s(x))-f_s(\psi_s(x),\nabla \psi_s(x))|^\frac{\alpha}{\alpha-1}dxds=0.
\end{split}
\end{equation*}
\end{remark}

\begin{proof}[Proof of Theorem~\ref{thm:sol_bound}]

In order to show the weak limit $u$ obtained in Lemma \ref{lem:weak limits} is indeed the unique solution of SPDE \eqref{eq:spde}, it remains to show that $f'=f(u,\nabla u)$ which can be shown using the monotonicity argument as below.
 
Define for each $w\in L^\alpha(\mathscr{D})\cap H_0^1(\mathscr{D}), s\in(0,T)$ and $k\in\mathbb{N}$, the operators
\[
A_sw:=L_sw+f_s^0 \quad \text{and} \quad B_s^kw:=M_s^kw+g_s^k.
\]
Then for any $w,w' \in L^\alpha(\mathscr{D})\cap H_0^1(\mathscr{D})$, we have using Remark \ref{rem:coercivity}
\begin{equation}\label{eq:monotonicity}
2\langle A_sw-A_sw',w-w' \rangle + \sum_{k\in\mathbb{N}} |B_s^kw-B_s^kw'|_{L^2}^2 \leq -\kappa |w-w'|^2_{H_0^1}+K'|w-w'|^2_{L^2}.
\end{equation}
Consider $\psi \in L^\alpha(\Omega\times (0,T),\mathscr{P};L^\alpha(\mathscr{D})) \cap L^2(\Omega\times (0,T),\mathscr{P};H_0^1(\mathscr{D}))$. Then using Assumption A-\ref{ass:f}, Remark \ref{rem:decrease} and definition of $f^m$, we have 
\begin{equation}\label{eq:f'decrease}
\langle f^{m'}_s(u_s^{m'}, \nabla u_s^{m'})-f^{m'}_s(\psi_s,\nabla u_s^{m'}),u_s^{m'}-\psi_s\rangle \leq 0
\end{equation}
almost surely for all $s\in[0,T]$. Moreover using Young's inequality and Assumption A-\ref{ass:f}, we have almost surely for all $s\in[0,T]$
\begin{equation}\label{eq:f'monotone}
2\langle f^{m'}_s(\psi_s,\nabla u_s^{m'})-f^{m'}_s(\psi_s,\nabla \psi_s),u_s^{m'}-\psi_s\rangle \leq \kappa|\nabla(u_s^{m'}-\psi_s)|^2_{L^2}+C|u_s^{m'}-\psi_s|^2_{L^2}.
\end{equation}
Define $K'':=K'+C$, where $K'$ and $C$ are as in \eqref{eq:monotonicity} and \eqref{eq:f'monotone} above. Then using the product rule and It\^{o}'s formula, we obtain 
\begin{equation}
\label{eq:ito1}
\begin{split}
\mathbb{E}  \big(e^{-K''t}&|u_t|_{L^2}^2\big) 
-  \mathbb{E}(|u_0|_{L^2}^2)   \\
& = \mathbb{E}\Big[\int_0^te^{-K''s}\Big(2\langle A_su_s+f'_s,u_s\rangle  
+\sum_{k\in\mathbb{N}} |B^k_su_s|_{L^2}^2-K''|u_s|_{L^2}^2\Big)ds\Big] 
\end{split}
\end{equation}
and
\begin{equation}
\label{eq:ito2}
\begin{split}
\mathbb{E}\big(e^{-K''t}|u_t^{m'}|_{L^2}^2\big)
-\mathbb{E}(|u_0^{m'}|_{L^2}^2)   =  \mathbb{E}\Big[& \int_0^te^{-K''s}\Big(2\langle A_su_s^{m'}+f_s^{m'}(u_s^{m'},\nabla u_s^{m'} ),u_s^{m'}\rangle \\
& +\sum_{k\in\mathbb{N}} |B_s^ku_s^{m'}|_{L^2}^2-K''|u_s^{m'}|_{L^2}^2\Big)ds\Big] 
\end{split}
\end{equation}
for all $t\in [0,T]$.

We now need to re-arrange the right-hand side of \eqref{eq:ito2} so that we can use the monotonicity assumptions. We have 
\begin{equation}\label{eq:to_use_mono}
\begin{split}
\mathbb{E}&\Big[\int_0^te^{-K''s}\Big(2\langle A_su_s^{m'}+f_s^{m'}(u_s^{m'}, \nabla u_s^{m'}),u_s^{m'}\rangle +\sum_{k\in\mathbb{N}} |B_s^ku_s^{m'}|_{L^2}^2-K''|u_s^{m'}|_{L^2}^2\Big)ds\Big]  \\
=&\mathbb{E}\Big[\int_0^te^{-K''s}\Big(2\langle A_su_s^{m'}-A_s\psi_s,u_s^{m'}\rangle+2\langle A_s\psi_s,u_s^{m'}\rangle +2 \langle A_su_s^{m'}-A_s\psi_s,\psi_s\rangle\\
&+ 2\langle f^{m'}_s(u_s^{m'}, \nabla u_s^{m'})-f^{m'}_s(\psi_s,\nabla \psi_s),u_s^{m'}-\psi_s\rangle+2\langle f^{m'}_s(\psi_s,\nabla \psi_s),u_s^{m'} \rangle \\
& +2 \langle f_s^{m'}(u_s^{m'},\nabla u_s^{m'})-f_s^{m'}(\psi_s,\nabla \psi_s),\psi_s\rangle +\sum_{k\in\mathbb{N}} \big|B_s^ku_s^{m'}-B_s^k\psi_s\big|_{L^2}^2-\sum_{k\in\mathbb{N}}|B_s^k\psi_s|_{L^2}^2  \\
&  +2\sum_{k\in\mathbb{N}}\big(B_s^ku_s^{m'},B_s^k\psi_s\big)-K''\left[|u_s^{m'}-\psi_s|_{L^2}^2-|\psi_s|_{L^2}^2 
+2(u_s^{m'},\psi_s)\right]\!\Big)ds\Big] \,. 
\end{split}
\end{equation}
Using \eqref{eq:f'decrease} and \eqref{eq:f'monotone}, we have
\begin{equation*}
\begin{split}
&2\langle f^{m'}_s(u_s^{m'}, \nabla u_s^{m'})-f^{m'}_s(\psi_s,\nabla \psi_s),u_s^{m'}-\psi_s\rangle \\
&=2\langle f^{m'}_s(u_s^{m'}, \nabla u_s^{m'})-f^{m'}_s(\psi_s,\nabla u_s^{m'})+f^{m'}_s(\psi_s,\nabla u_s^{m'})-f^{m'}_s(\psi_s,\nabla \psi_s),u_s^{m'}-\psi_s\rangle \\
& \leq \kappa|\nabla(u_s^{m'}-\psi_s)|^2_{L^2}+C|u_s^{m'}-\psi_s|^2_{L^2}
\end{split}
\end{equation*}
and hence using \eqref{eq:monotonicity} in \eqref{eq:to_use_mono} together with \eqref{eq:ito2}, we obtain for all $t\in[0,T]$
\begin{equation*}
\begin{split}
 \mathbb{E} & \big(e^{-K''t}|u_t^{m'}|_{L^2}^2\big)-\mathbb{E}(|u_0^{m'}|_{L^2}^2)  \\
\leq & \mathbb{E}\Big[\int_0^te^{-K''s}\Big( 2\langle A_s\psi_s,u_s^{m'}\rangle +2 \langle A_su_s^{m'}-A_s\psi_s,\psi_s\rangle \\
& +2\langle f_s^{m'}(\psi_s,\nabla \psi_s),u_s^{m'}\rangle+2 \langle f_s^{m'}(u_s^{m'},\nabla u_s^{m'})-f_s^{m'}(\psi_s,\nabla \psi_s),\psi_s\rangle 
\\
&   -\sum_{k\in\mathbb{N}}|B_s^k\psi_s|_{L^2}^2 +2\sum_{k\in \mathbb{N}}\big(B_s^ku_s^{m'},B_s^k\psi_s\big)+K''\big[|\psi_s|_{L^2}^2 
 -2(u_s^{m'},\psi_s)\big] \Big)ds\Big]. 
 \end{split}
\end{equation*}
Now, integrating over $t$ from $0$ to $T$, letting $m' \rightarrow \infty$ and using the weak lower semicontinuity of the norm, we obtain
\begin{equation}
\label{eq:weaklimits}
\begin{split}
\mathbb{E}\Big[\int_0^T &\big(e^{-K''t}|u_t|_{L^2}^2-|u_0|_{L^2}^2\big)dt\Big] \\
\leq &  \liminf_{m'\rightarrow\infty}\mathbb{E}\Big[\int_0^T\big(e^{-K''t}|u_t^{m'}|_{L^2}^2-|u_0^{m'}|_{L^2}^2\big)dt\Big]  \\
\leq  & \mathbb{E}\Big[\int_0^T\int_0^te^{-K''s}\Big( 2\langle A_s\psi_s,u_s\rangle+2 \langle A_su_s-A_s\psi_s,\psi_s\rangle   \\
&  +2\langle f_s(\psi_s, \nabla \psi_s),u_s\rangle+2 \langle f'_s-f_s(\psi_s,\nabla \psi_s),\psi_s\rangle  -\sum_{k\in\mathbb{N}}|B_s^k\psi_s|_{L^2}^2 \\
& +2\sum_{k\in\mathbb{N}}(B^k_su_s,B_s^k(\psi_s))+K''\left[|\psi_s|_{L^2}^2 
 -2(u_s,\psi_s)\right] \Big)dsdt\Big]
\end{split}
\end{equation}
where we have used Remark \ref{rem:strong convergence} in last inequality.
Again, integrating from $0$ to $T$ in~\eqref{eq:ito1} and combining this 
with \eqref{eq:weaklimits}, we get
\begin{equation*}
\begin{split}
\mathbb{E}\Big[\int_0^T&\int_0^t e^{-K''s}\Big( 2\langle A_su_s-A_s\psi_s,u_s-\psi_s \rangle +2\langle f'_s-f_s(\psi_s,\nabla \psi_s),u_s-\psi_s \rangle \\
&\quad+\sum_{k\in\mathbb{N}}|B_s^k\psi_s-B^k_su_s|_{L^2}^2 -K''|u_s-\psi_s|_{L^2}^2 \Big)dsdt\Big]\leq 0
\end{split}
\end{equation*}
which on using \eqref{eq:monotonicity} gives
\begin{equation}
\label{eq:limit f}
\mathbb{E}\Big[\int_0^T\int_0^t e^{-K''s}\Big( 2\langle f'_s-f_s(\psi_s, \nabla \psi_s),u_s-\psi_s \rangle  
 \Big)dsdt\Big]\leq 0. 
\end{equation}

Let $\eta\in L^\infty((0,T)\times \Omega;\mathbb{R})$, $\phi\in C_0^\infty(\mathscr{D})$, 
$\epsilon\in (0,1)$ and let $\psi=u-\epsilon \eta \phi$. Then from~\eqref{eq:limit f} one obtains that
\begin{equation*}
\mathbb{E} \Big[ \int_0^T  \int_0^t 2\epsilon e^{-K''s}  \langle f'_s-f_s(u_s-\epsilon \eta_s \phi,\nabla u_s-\epsilon \eta_s \nabla \phi),\eta_s \phi \rangle  dsdt\Big]\leq 0. 
\end{equation*}

Dividing by $\epsilon$, letting $\epsilon\rightarrow  0$, 
using Lebesgue dominated convergence theorem and 
Assumption A-\ref{ass:f} leads to
\begin{align}
&\mathbb{E}\Big[\int_0^T \int_0^t 2 e^{-K''s}
 \eta_s \langle f'_s-f_s(u_s,\nabla u_s),\phi \rangle dsdt\Big]\leq 0. \notag
\end{align}
Since this holds for any $\eta\in L^\infty((0,T)\times \Omega, \mathscr{P} ;\mathbb{R})$ and $\phi\in C_0^\infty(\mathscr{D})$, one gets that $f(u,\nabla u)= ~f'$ which concludes the proof.

 Further, taking $m \to \infty$ in \eqref{eq:m,p_bound} and using the weak lower semicontinuity of the norm, we obtain the following estimates for the solution of \eqref{eq:spde}

\begin{equation*}  
\begin{split}
\mathbb{E}\sup_{0\leq t \leq T}|u_t|_{L^p}^p 
+ & \mathbb{E}\int_0^T\int_\mathscr{D}|\nabla u_s|^2|u_s|^{p-2}dxds \\
&\leq \liminf_{m\to \infty} \Big[\mathbb{E}\sup_{0\leq t \leq T}|u_t^m|_{L^p}^p+\mathbb{E}\int_0^T\int_\mathscr{D}|\nabla u_s^m|^2|u_s^m|^{p-2}dxds\Big]\\
& \leq C\mathbb{E}\Big(|\phi|_{L^p}^p+\Vert f^0 \Vert _{L^p}^p+\Vert|g|_{\ell^2}\Vert^p_{L^p}\Big).
\end{split}
\end{equation*} 

\end{proof}

\section{Interior Regularity}
\label{sec:regularity}
In this section, we present the results on interior regularity of the solution to SPDE \eqref{eq:spde}. The main result is stated in Theorem \ref{thm:higher_regularity}. The idea is to prove the result for the linear SPDE first and then use it along with the $L^p$-estimates obtained in Section \ref{sec: Lp} to prove Theorem \ref{thm:higher_regularity}. We do not claim the result for the linear case to be new, however we could not find such result in literature in sufficient generality. 

To raise the regularity of the solution one needs the given data to be sufficiently smooth. Thus, we assume the following condition on the coefficients before stating the main result of this section. 
\begin{assumption} \label{ass:der_boundedness}
For any $i,j=1,\ldots,d$, the coefficients $a^{ij}, b^i$ and $c$ 
and their spatial derivatives up to order $n$ are real-valued, 
$\mathscr{P} \times \mathscr{B}(\mathscr{D})$-measurable and are bounded by $K$. 
The coefficients $\sigma^i=(\sigma^{ik})_{k=1}^\infty$, $\mu=(\mu^k)_{k=1}^\infty$ 
and their spatial derivatives up to order $n$  are $\ell^2$-valued,
$\mathscr{P} \times \mathscr{B}(\mathscr{D})$-measurable and almost surely
\[
\sum_{i=1}^d\sum_{k\in \mathbb{N}} \sum_{|\gamma|\leq n}|D^\gamma\sigma_t^{ik}(x)|^2+\sum_{k\in\mathbb{N}}\sum_{|\gamma|\leq n}|D^\gamma\mu_t^k(x)|^2 \leq K
\] 
for all $t$ and $x$.
\end{assumption}

For $A$, $B$ subsets of $\mathbb R^d$ let $\dist(A, B)$ denote the distance between $A$ and $B$. 
Further, for $\ell = 1,2$ define
\begin{equation*}
\begin{split}
\mathcal{I}^\ell:=\mathbb{E}\Big[ \sum_{|\gamma|\leq \ell}|D^\gamma \phi|^2_{L^2}+ & \sum_{|\gamma|\leq \ell-1}\Vert D^\gamma f^0 \Vert_{L^2}^2 +\sum_{|\gamma|\leq \ell}\Vert|D^\gamma g|_{\ell^2}\Vert^2_{L^2} \\
 & +|\phi|_{L^{2\alpha-2}}^{2\alpha-2}+\Vert f^0 \Vert _{L^{2\alpha-2}}^{2\alpha-2}+\Vert|g|_{\ell^2}\Vert^{2\alpha-2}_{L^{2\alpha-2}} \Big]\,.
\end{split}
\end{equation*}

\begin{theorem}\label{thm:higher_regularity}
Let Assumptions A-\ref{ass:parabolicity} to A-\ref{ass:initial} hold and $u$ be the solution to \eqref{eq:spde}. Fix some open $\mathscr{D''}\Subset\mathscr{D'}\Subset \mathscr{D}$ such that $\dist(\mathscr{D'}, \partial \mathscr D)<1$ and  $\dist(\mathscr{D''}, \partial \mathscr D')<1$.
\begin{enumerate} [(i)]
\item
If Assumption A-\ref{ass:der_boundedness} holds with $n=1$, 
and if $\phi \in L^2(\Omega,\mathscr{F}_0;H^1(\mathscr{D}))$ and $g \in~ L^2( \Omega \times (0,T),\mathscr{P};H^1(\mathscr{D};\ell^2))$, then 
\[
u \in C([0,T], H^1(\mathscr{D}'))\,\,\,\text{a.s. and} \,\,\,  
u\in L^2(\Omega \times (0,T),\mathscr{P};H^{2}(\mathscr{D}')).
\]
Moreover, there is $C=C(d,T,K,\kappa)$ such that
 \begin{equation}
 \begin{split}\label{eq:blow1_regularity}
 \mathbb{E}\sup_{0\leq t\leq T}|\partial_i u_t|_{L^2(\mathscr{D'})}^2
+  & \mathbb{E}\int_0^T|\partial_i u_t|^2_{H^1(\mathscr{D'})} dt \leq C\dist(\mathscr{D'},\partial \mathscr{D})^{-2} \mathcal{I}^1
\end{split} 
 \end{equation}
 for all $i=1,\ldots,d$.

\item
 Further, in case the semilinear term $f$ does not depend on $z$, if Assumption A-\ref{ass:boundedness} holds with $n=2$, if
$\phi \in L^2(\Omega,\mathscr{F}_0;H^2(\mathscr{D}))$, 
$f^0 \in L^2( \Omega \times (0,T),\mathscr{P};H^1(\mathscr{D}))$ and 
$g \in L^2( \Omega \times (0,T),\mathscr{P};H^2(\mathscr{D};\ell^2))$ and if 
almost surely
\begin{equation}\label{eq:f_prime}
|\partial_rf_t(x,r)|\leq K(1+|r|)^{\alpha-2}\,\,\,\text{and}\,\,\,
|\partial_i f_t(x,r)| \leq K(1+|r|)^{\alpha - 1}
\end{equation}
for all $i=1,\dots, d$, $t\in [0,T], x\in \mathscr{D}$ and all $r\in \mathbb{R}$,  then we have 
\[
u\in C([0,T], H^2(\mathscr{D}'')) \,\,\,\text{a.s. and} \,\,\, u\in  L^2(\Omega \times (0,T),\mathscr{P};H^{3}(\mathscr{D}'')).
\] 
Furthermore, there is $C=C(d,T,K,\kappa)$ such that
\begin{equation}\label{eq:blow2_regularity}
\begin{split}
  \mathbb{E}\sup_{0\leq t\leq T}|\partial_i \partial_j u_t|_{L^2(\mathscr{D''})}^2
                                +\mathbb{E}\int_0^T|& \partial_i \partial_j u_t|^2_{H^1(\mathscr{D''})} dt 
  \leq C \dist(\mathscr{D}'',\partial \mathscr D')^{-2} \mathcal{I}^2 \\
  &  + C \dist(\mathscr{D}'',\partial \mathscr D')^{-2} \dist(\mathscr{D}',\partial \mathscr D)^{-2}\mathcal{I}^1
\end{split} 
\end{equation}
for all $i,j=1,\ldots,d$.
\end{enumerate}
\end{theorem}

One can obtain regularity results up to the boundary in appropriate weighted Sobolev spaces using results from Krylov \cite{krylov94} along with the $L^p$-estimates obtained in Theorem~\ref{thm:sol_bound}. However, obtaining the similar results for the linear equations using $L^p$-theory is more useful . We will discuss this in Section \ref{sec:Lp_reg}.  
 
As mentioned before, we will first get the results for linear equations. So,
we consider the following linear stochastic evolution equation:
\begin{equation}
\label{eq:lin_spde}
\begin{split}
dv_t & =(L_tv_t+f_t)dt+\sum_{k\in \mathbb{N}}(M_t^kv_t+g_t^k)dW_t^k
\,\,\,\ \text{on}\,\,\, [0,T]\times \mathscr{D},
 \end{split}
\end{equation}
where the operators $L$ and $M^k$ are defined in \eqref{eq:def_L,M}. As can be seen in what follows, one can raise the regularity to any order for the linear equation by assuming the given data to be sufficiently smooth. Thus we make the following assumption on initial data and the free terms and then state the result in Theorem \ref{thm:H_m_regularity}.
 
Let $n\geq 0$ be an integer. 
 

\begin{assumption}\label{ass:der_initial}
Assume that $v_0 \in L^2(\Omega,\mathscr{F}_0;H^n(\mathscr{D}))$, 
$g \in~L^2( \Omega \times (0,T),\mathscr{P};H^n(\mathscr{D};\ell^2))$ 
and $f\in L^2( \Omega \times (0,T),\mathscr{P};H^{n-1}(\mathscr{D}))$.


\end{assumption}

\begin{theorem}\label{thm:H_m_regularity}
Assume that $v$ is a continuous $L^2(\mathscr{D})$-valued adapted process
such that $v \in~ L^2(\Omega \times (0,T), \mathscr{P}; H^1(\mathscr{D}))$, 
and it satisfies \eqref{eq:lin_spde}. 
If Assumptions  A-~\ref{ass:parabolicity}, A-~\ref{ass:der_boundedness} 
and  A-~\ref{ass:der_initial} hold,
then for all open $\mathscr{D'}\Subset \mathscr{D}$,
\begin{equation*}
v \in C([0,T], H^n(\mathscr{D}')) \,\,\,\text{a.s.} \,\,\, \text{and}\,\,\,
v \in L^2(\Omega \times (0,T),\mathscr{P};H^{n+1}(\mathscr{D}'))
 \end{equation*}
\end{theorem}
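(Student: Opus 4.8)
The plan is to prove interior regularity by induction on $n$, using the standard technique of differentiating the equation and localizing with a cutoff function. The base case $n=0$ is simply the hypothesis that $v \in L^2(\Omega \times (0,T),\mathscr{P};H^1(\mathscr{D}))$ together with continuity in $L^2(\mathscr{D})$, which holds by assumption. For the inductive step, I assume the result for $n-1$, so that $v \in C([0,T],H^{n-1}(\mathscr{D}'))$ a.s.\ and $v \in L^2(\Omega \times (0,T),\mathscr{P};H^n(\mathscr{D}'))$ for every $\mathscr{D}' \Subset \mathscr{D}$, and I aim to gain one more derivative on a slightly smaller subdomain.

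\textbf{The localization and differentiation step.} Fix $\mathscr{D}' \Subset \mathscr{D}''\Subset \mathscr{D}$ and choose a cutoff $\zeta \in C_0^\infty(\mathscr{D}'')$ with $\zeta \equiv 1$ on $\mathscr{D}'$. For a multi-index $\gamma$ with $|\gamma|\leq n$, I would formally apply $D^\gamma$ to~\eqref{eq:lin_spde} and study the process $w := \zeta\, D^\gamma v$. Applying $D^\gamma$ to $L_t v$ and $M_t^k v$ and using the Leibniz rule, one finds that $w$ satisfies a linear SPDE of the same structural form,
\begin{equation*}
dw_t = (L_t w_t + \tilde f_t)\,dt + \sum_{k\in\mathbb{N}}(M_t^k w_t + \tilde g_t^k)\,dW_t^k,
\end{equation*}
where the new free terms $\tilde f$ and $\tilde g$ collect the commutator terms $[D^\gamma,L_t]v$, $[D^\gamma,M_t^k]v$ and the terms arising from derivatives of $\zeta$. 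Crucially, every such commutator term involves at most $|\gamma|\leq n$ derivatives of $v$ (the top-order terms in which all derivatives hit $v$ cancel because they are already accounted for in $L_t w$ and $M_t^k w$), and the coefficients of these terms involve spatial derivatives of $a,b,c,\sigma,\mu$ up to order $n$, which are bounded by Assumption~A-\ref{ass:der_boundedness}. Thus $\tilde f \in L^2(\Omega\times(0,T),\mathscr{P};L^2)$ and $\tilde g \in L^2(\Omega\times(0,T),\mathscr{P};H^1(\mathscr{D};l^2))$, using the inductive hypothesis $v \in L^2(H^n(\mathscr{D}''))$ and Assumption~A-\ref{ass:der_initial} on $f$ and $g$.

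\textbf{The energy estimate step.} With $w$ satisfying a non-degenerate linear SPDE with $L^2$-integrable free terms, I would apply It\^o's formula for the square of the $L^2$-norm (as in the uniqueness proof, citing Gy\"ongy and \v{S}i\v{s}ka~\cite{gyongy17} or Pardoux~\cite{pardoux75}) and invoke the coercivity from Assumption~A-\ref{ass:parabolicity}, in the spirit of Remark~\ref{rem:coercivity}, to obtain
\begin{equation*}
\mathbb{E}\sup_{0\leq t\leq T}|w_t|_{L^2}^2 + \mathbb{E}\int_0^T |w_t|_{H_0^1}^2\,dt \leq N\,\mathbb{E}\Big(|\zeta D^\gamma v_0|_{L^2}^2 + \|\tilde f\|_{L^2}^2 + \||\tilde g|_{l^2}\|_{L^2}^2\Big) < \infty.
\end{equation*}
Summing over all $|\gamma|\leq n$ and using that $\zeta\equiv 1$ on $\mathscr{D}'$ yields $v\in C([0,T],H^n(\mathscr{D}'))$ a.s.\ and $v\in L^2(\Omega\times(0,T),\mathscr{P};H^{n+1}(\mathscr{D}'))$, completing the induction.

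\textbf{The main obstacle.} The chief technical difficulty is rigor rather than strategy: the formal differentiation $D^\gamma v$ is only legitimate once one knows $v$ possesses the regularity being claimed, so the argument cannot simply differentiate the equation pointwise. The standard remedy, which I would implement, is to work with difference quotients $\delta_{h,i} v := h^{-1}(v(\cdot+he_i)-v(\cdot))$ in place of $\partial_i$: one derives the energy estimate for $\zeta\,\delta_{h,i}^{\gamma} v$ with a bound uniform in $h$, then passes to the limit $h\to 0$ to identify the weak limit as the genuine weak derivative and to transfer the estimate. Handling the commutators and the boundary terms generated by $\zeta$ carefully, and ensuring all constants are independent of $h$, is where the real work lies; everything else reduces to the linear monotone SPDE theory already invoked in the paper.
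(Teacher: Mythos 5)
Your strategy is the one the paper itself uses: localize with a cutoff, replace derivatives by difference quotients, apply It\^o's formula for the square of the $L^2$-norm, use the stochastic parabolicity to extract a coercive term, control the martingale part with the Burkholder--Davis--Gundy inequality, pass to the limit $h\to 0$ uniformly in $h$, and then bootstrap to higher $n$ by differentiating the equation once the lower-order regularity is in hand (this is exactly the content of Lemmas~\ref{lem:H_2_regularity} and~\ref{lem:H_k_regularity}). So the plan is sound. However, one step as you state it is incorrect and, taken literally, your induction would not close. You claim that every commutator term in $[D^\gamma,L_t]v$ involves at most $|\gamma|\leq n$ derivatives of $v$, and hence that $\tilde f\in L^2(\Omega\times(0,T),\mathscr{P};L^2)$ follows from the inductive hypothesis $v\in L^2(\Omega\times(0,T),\mathscr{P};H^n(\mathscr{D}''))$. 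Since $L_t$ is second order, the commutator contains terms of the form $\partial_j\bigl(D^\beta a^{ij}\,D^{\gamma-\beta}\partial_i v\bigr)$ with $1\leq|\beta|\leq|\gamma|$; written out, the worst of these ($|\beta|=1$) carries $|\gamma|+1=n+1$ derivatives of $v$, which is precisely the regularity you are trying to establish, so it is not an $L^2$-valued free term under your inductive hypothesis.

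There are two standard repairs, and the paper uses both. First, at the level of the energy estimate these contributions are kept in divergence form and integrated by parts against the localized difference quotient (the term $I_1$ in the proof of Lemma~\ref{lem:H_2_regularity}); this produces a product of an $n$-derivative factor of $v$ with $\nabla(\eta\,\delta_l^h v)$, and Young's inequality absorbs the top-order piece into the coercive term $-2\kappa\int\eta^2|\nabla(\delta_l^h v)|^2$ coming from Assumption A-\ref{ass:parabolicity}. Second, the passage from $n$ to $n+1$ is done one derivative at a time on nested subdomains: $z=\partial_l v$ satisfies the same equation with free term $\bar f$ containing $\partial_j(\partial_l a^{ij}\partial_i v)$, and this lies in $L^2(\Omega\times(0,T),\mathscr{P};L^2(\mathscr{D}'))$ precisely because the previous step already yielded $v\in L^2(\Omega\times(0,T),\mathscr{P};H^2(\mathscr{D}'))$ on the larger subdomain. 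If you restructure your induction accordingly --- one derivative per step, with the top-order commutator contribution absorbed into the coercivity rather than placed in the free term --- your argument coincides with the paper's.
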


We will prove Theorem~\ref{thm:H_m_regularity} via Lemmas \ref{lem:H_2_regularity} and \ref{lem:H_k_regularity}. 
In Lemma~\ref{lem:H_2_regularity}, we first prove the special case $n=1$. 
\begin{lemma}\label{lem:H_2_regularity}
Assume that $v \in C([0,T]; L^2(\mathscr{D}))$ a.s., $v$ is adapted and satisfies ~\eqref{eq:lin_spde} and moreover $v \in L^2(\Omega \times (0,T), \mathscr{P}; H^1(\mathscr{D}))$. 
If Assumptions A-\ref{ass:parabolicity}, A-\ref{ass:der_boundedness}  and  A-\ref{ass:der_initial} hold with $n=1$, then there is $C=C(d,T,K,\kappa)$ such that  
 \begin{equation}
 \begin{split}\label{eq:H_2_regularity}
 \mathbb{E}\sup_{0\leq t\leq T}|\partial_i v_t|_{L^2(\mathscr{D'})}^2
+ & \mathbb{E}\int_0^T|\partial_i v_t|^2_{H^1(\mathscr{D'})} dt 
 \leq C\dist(\mathscr{D'},\partial \mathscr{D})^{-2} \Bigg[\mathbb{E} \int_\mathscr{D}|\nabla v_0|^2 dx \\  
 & + \mathbb{E}\int_0^T\int_\mathscr{D} \Big[|\nabla v_t|^2  
 + |f_t|^2 + |v_t|^2 + \sum_{k\in  
 \mathbb{N}}|\nabla g_t^k|^2 \Big]dxdt\Bigg]
\end{split} 
 \end{equation}
 for all $i=1,\ldots,d$ and all open $\mathscr{D'}\Subset \mathscr{D}$ such that $\dist(\mathscr{D'},\partial \mathscr{D})<1$.
\end{lemma}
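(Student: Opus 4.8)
The plan is to establish \eqref{eq:H_2_regularity} by a localized difference-quotient argument combined with the It\^o formula for the square of the $L^2$-norm and the coercivity encoded in Assumption A-\ref{ass:parabolicity}. Since $v$ is only assumed to lie in $L^2(\Omega\times(0,T);H^1(\mathscr{D}))$, direct differentiation of \eqref{eq:lin_spde} in space is not justified; I would therefore work with the first-order difference quotients $\delta_h^l w(x):=h^{-1}(w(x+he_l)-w(x))$, $l=1,\dots,d$, for small $h$, and recover the spatial derivative only at the end by passing to the limit $h\to0$. To confine the estimate to $\mathscr{D}'$ I would fix a cut-off $\zeta\in C_0^\infty(\mathscr{D})$ with $0\le\zeta\le1$, $\zeta\equiv1$ on $\mathscr{D}'$ and $\supp\zeta\Subset\mathscr{D}$; all estimates will then be for $\zeta\,\delta_h^l v$, with constants depending on $\zeta$ (hence on $\mathscr{D}'$).

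First I would apply $\delta_h^l$ to \eqref{eq:lin_spde}. Writing $\tau_h$ for the shift by $he_l$ and using $\delta_h^l(ab)=(\tau_h a)\,\delta_h^l b+(\delta_h^l a)\,b$, the process $w:=\delta_h^l v$ again solves a linear equation of the same type, with the principal coefficients replaced by their shifts $\tau_h a^{ij},\tau_h\sigma^{ik}$ (which still satisfy A-\ref{ass:parabolicity} pointwise) and with additional lower-order forcing coming from the \emph{commutators} $(\delta_h^l a^{ij})\partial_i v$, $(\delta_h^l b^i)\partial_i v$, $(\delta_h^l c)v$ in the drift and $(\delta_h^l\sigma^{ik})\partial_i v$, $(\delta_h^l\mu^k)v$ in the noise, together with $\delta_h^l f$ and $\delta_h^l g^k$. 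By A-\ref{ass:der_boundedness} with $n=1$ the coefficient difference quotients are bounded by $K$ uniformly in $h$, and $\delta_h^l g^k$ is controlled in $L^2(\ell^2)$ by $\nabla g$; since $\delta_h^l v$ lies in $H^1$ on $\supp\zeta$ for $h$ small, the equation for $w$ may be treated on the Gelfand triple and the It\^o formula for $\int_{\mathscr{D}}\zeta^2|w|^2\,dx$ applies.

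Applying that It\^o formula and using Assumption A-\ref{ass:parabolicity} exactly as in Remark~\ref{rem:coercivity}, the principal drift and quadratic-variation terms combine to yield the good term $-\kappa\,\mathbb{E}\int_0^T\!\int_{\mathscr{D}}\zeta^2|\nabla w|^2\,dx\,dt$. What remains is to dominate every other contribution by this term (with a small $\epsilon$) plus the right-hand side of \eqref{eq:H_2_regularity}. The cut-off commutators produce factors $\nabla\zeta\cdot w\,\nabla w$, bounded via Young's inequality by $\epsilon\zeta^2|\nabla w|^2+N|w|^2$, and $|w|_{L^2(\supp\zeta)}\le|\nabla v|_{L^2}$ by the standard difference-quotient bound; the lower-order commutator forcing terms are handled the same way. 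The only genuinely delicate point is the term carrying $f$ and the derivative-of-coefficient commutators, which after integration by parts pair $f$ (respectively $(\delta_h^l a^{ij})\partial_i v$) against $\partial_j(\zeta^2 w)$, i.e.\ effectively against a second difference quotient of $v$; these I would absorb by writing $\zeta\partial_j w=\partial_j(\zeta w)-(\partial_j\zeta)w$ and using Young's inequality with small $\epsilon$ so that the second-order part is swallowed by $\kappa\zeta^2|\nabla w|^2$, the leftover being controlled by $|f|^2+|\nabla v|^2$. The martingale term is estimated by the Burkholder--Davis--Gundy inequality and, as in \eqref{eq:BDG}, split so that half of $\mathbb{E}\sup_t\int\zeta^2|w|^2$ is absorbed into the left-hand side.

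This yields a bound on $\mathbb{E}\sup_{t}\int_{\mathscr{D}}\zeta^2|\delta_h^l v_t|^2\,dx+\mathbb{E}\int_0^T\!\int_{\mathscr{D}}\zeta^2|\nabla\delta_h^l v|^2\,dx\,dt$ that is uniform in $h$ and in $l$. Summing over $l=1,\dots,d$ and passing to the limit $h\to0$ --- using weak compactness in $L^2(\Omega\times(0,T);H^1)$ together with weak lower semicontinuity of the norms for both the supremum and the space--time integral --- gives $v\in L^2(\Omega\times(0,T);H^2(\mathscr{D}'))$ and \eqref{eq:H_2_regularity}; the a.s.\ continuity $\partial_i v\in C([0,T];L^2(\mathscr{D}'))$ then follows because, with this regularity now available, $\partial_i v$ genuinely satisfies the differentiated equation on $\mathscr{D}'$ and the It\^o formula may be applied directly. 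I expect the main obstacle to be precisely the bookkeeping around the low regularity of $f$ (only $L^2$ when $n=1$): one must move the difference quotient off $f$ by summation by parts and arrange all resulting second-order terms to be absorbed into the coercivity term, while keeping every constant independent of $h$.
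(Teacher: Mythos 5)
Your proposal is correct and follows essentially the same route as the paper's proof: a cut-off function, first-order difference quotients, It\^o's formula for the square of the $L^2$-norm combined with the stochastic parabolicity condition, summation by parts to move the difference quotient off the merely $L^2$ free term $f$, Burkholder--Davis--Gundy with absorption of half the supremum, and passage to the limit $h\to 0$ via the difference-quotient characterization of weak derivatives. No substantive differences or gaps.
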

\begin{proof}
Let $\zeta = \dist(\mathscr{D'},\partial \mathscr{D})$.
We consider a cut-off function $\eta \in C_0^\infty(\mathscr{D})$ which is $1$ on $\mathscr{D}'$ and such that $\eta \leq 1$ and $|\partial_i\eta|\leq C \zeta^{-1}$ for $i=1,2,\ldots, d$. 
Define the $l^{th}$-difference quotient, $l\in \{1,2,\ldots,d \}$, by
 \[
 \delta_l^hu(x):=\frac{1}{h}\big(T_l^hu-u\big)(x),\qquad x\in\mathbb{R}^d
 \]
 where $T_l^hu(x)=u(x+he_l)$ is the shift operator and the step-size $h$ satisfies $2|h|< \dist(\supp \eta,\partial{\mathscr{D}})$. 
From \eqref{eq:lin_spde}, we get
\[
d(\eta \delta_l^h v_t)=\eta \delta_l^h(L_tv_t+f_t)dt+\eta \sum_{k\in \mathbb{N}}\delta_l^h(M_t^kv_t+g_t^k)dW_t^k.
\] 
 Applying It\^o's formula for the square of $L^2$-norm, we get
 
 \begin{equation*}
 \begin{split}
 d|\eta \delta_l^h v_t|_{L^2(\mathscr{D})}^2 =2 \langle \eta \delta_l^h(L_tv_t+f_t),\eta \delta_l^h v_t \rangle dt &+ 2 \sum_{k\in \mathbb{N}}(\eta \delta_l^h(M_t^kv_t+g_t^k), \eta \delta_l^h v_t)dW_t^k \\
 &+ \sum_{k\in \mathbb{N}}|\eta \delta_l^h(M_t^kv_t+g_t^k)|_{L^2(\mathscr{D})}^2 dt.
 \end{split}
 \end{equation*}
It follows from the definition of $\delta_l^h$ and linearity of $\partial_j$, that the two operators commute. Thus, using integration by parts and the formula 
\begin{equation*}
\label{rem:delta}
 \delta_l^h(vw)(x)=\delta_l^hv(x)T_l^hw(x)+v(x)\delta_l^hw(x)
\end{equation*}
we get,

\begin{equation}
 \begin{split} \label{eq:ito_difference}
 \int_\mathscr{D}& \eta^2 |\delta_l^h v_t|^2 dx 
     =  \int_\mathscr{D}\eta^2 |\delta_l^h v_0|^2 dx
       +2 \int_0^t\int_\mathscr{D}\eta ^2 \delta_l^h(L_sv_s+f_s)\delta_l^h v_s  dxds \\
 &   + \mathscr{M}_t^h  + \sum_{k\in \mathbb{N}}\int_0^t\int_\mathscr{D}\eta^2 
          |\delta_l^h(M_s^kv_s+g_s^k)|^2 dxds \\
= & I_0-2 \int_0^t\int_\mathscr{D}\eta ^2\sum_{i,j=1}^d a_s^{ij} \, \partial_i(\delta_l^h v_s)\, \partial_j(\delta_l^h v_s)+I_1+I_2+I_3+ \mathscr{M}_t^h +I_4
\end{split}
\end{equation}
where,
\begin{equation*}
\begin{split}            
 I_0  := &  \int_\mathscr{D}\eta^2 |\delta_l^h v_0|^2 dx , \\
 I_1 := & -2 \int_0^t\int_\mathscr{D}\eta ^2                 
         \sum_{i,j=1}^d\delta_l^h a_s^{ij}\,\partial_i (T_l^hv_s)\partial_j(\delta_l^h  v_s) dxds , \\
  I_2 := &  -4\int_0^t\int_\mathscr{D}\eta \sum_{i,j=1}^d\big[\delta_l^h a_s^{ij}\, \partial_i   
              (T_l^hv_s)+a_s^{ij} \, \partial_i(\delta_l^h v_s)\big] \partial_j\eta \delta_l^h     
              v_s dxds \\
        I_3 :=  & 2 \int_0^t\int_\mathscr{D}\eta ^2  \Big[\sum_{i=1}^d\{\delta_l^h b_s^i \, \partial_i   
            (T_l^hv_s)+ b_s^i \, \delta_l^h(\partial_i v_s)\}   \\
  &    +\delta_l^h c_s \, T_l^hv_s+c_s \, \delta_l^h v_s+\delta_l^h f_s \Big]\delta_l^h v_sdxds ,\\
 I_4 :=& \sum_{k\in \mathbb{N}}\int_0^t\int_\mathscr{D} \eta^2  \Big|\sum_{i=1}^d\delta_l^h \sigma_s^{ik} \, \partial_i(T_l^hv_s)+ \delta_l^h\mu_s^k \, T_l^hv_s \\
&   +\sum_{i=1}^d \sigma_s^{ik} \, \partial_i(\delta_l^h v_s)+\mu_s^k \,  \delta_l^hv_s+\delta_l^hg_s^k \Big|^2 dxds                                                                                           
\end{split}
\end{equation*}
and
\[
\mathscr{M}_t^h:= 2 \sum_{k\in \mathbb{N}}\int_0^t\int_\mathscr{D}\eta^2 \delta_l^h(M_s^kv_s+g_s^k) \delta_l^h v_sdxdW_s^k.
\] 
Now, we see that 
\begin{equation*}
\begin{split}
& I_4=  \sum_{k\in \mathbb{N}} \int_0^t \int_\mathscr{D} \eta^2 \left[ \Big|\sum_{i=1}^d \delta_l^h    
        \sigma_s^{ik} \, \partial_i(T_l^hv_s)+ \delta_l^h \mu_s^k \, T_l^hv_s \Big|^2 \right. \\
& +2\Big[\sum_{i=1}^d \delta_l^h \sigma_s^{ik} \, \partial_i(T_l^hv_s)
               + \delta_l^h\mu_s^k \, T_l^hv_s \Big]\Big[\sum_{i=1}^d  
               \sigma_s^{ik} \, \partial_i(\delta_l^hv_s)+\mu_s^k \delta_l^hv_s+\delta_l^hg_s^k     
                     \Big]  \\
& \left. +\Big|\sum_{i=1}^d  \sigma_s^{ik} \, \partial_i(\delta_l^h v_s)
               +\mu_s^k \, \delta_l^h v_s + \delta_l^h g_s^k \Big|^2 
               \right]dxds \leq  \sum_{i,j=1}^d \sigma_s^{ik} \, \partial_i(\delta_l^h v_s) \, 
       \sigma_s^{jk} \, \partial_j(\delta_l^h v_s)+ \bar{I_4}
\end{split}
\end{equation*}       
where,
\begin{equation*}
\begin{split}
\bar{I_4}:= & \sum_{k\in \mathbb{N}}\int_0^t\int_\mathscr{D}\eta^2\left[ (d+1)\sum_{i=1}^d|\delta_l^h 
       \sigma_s^{ik}|^2|\partial_i(T_l^hv_s)|^2+ (d+1)|\delta_l^h\mu_s^k \, T_l^hv_s|^2\right.\\
&  + 2  \sum_{i,j=1}^d \delta_l^h \sigma_s^{ik} \, \partial_i(T_l^hv_s) \, 
            \sigma_s^{jk} \, \partial_j(\delta_l^h v_s) +2\sum_{i,j=1}^d \delta_l^h \sigma_s^{ik} \, \partial_i(T_l^h v_s) \, \mu_s^k \, \delta_l^h v_s \\ 
& + 2\sum_{i,j=1}^d \delta_l^h \sigma_s^{ik} \, \partial_i(T_l^hv_s) \, \delta_l^h g_s^k
+ 2 \sum_{i=1}^d \sigma_s^{ik} \, \partial_i(\delta_l^h  
              v_s) \,\delta_l^h \mu_s^k \, T_l^h v_s\\ 
&  + 2 \delta_l^h \mu_s^k \, T_l^hv_s \, \mu_s^k \, \delta_l^h v_s
               + 2\delta_l^h \mu_s^k \, T_l^hv_s \, \delta_l^h g_s^k  \\     
& +|\mu_s^k \, \delta_l^h v_s|^2
       +|\delta_l^h g_s^k|^2+2\sum_{i=1}^d \sigma_s^{ik} \, \partial_i(\delta_l^h v_s) \, \mu_s^k  \,  
       \delta_l^hv_s\\
&  \left. + 2\sum_{i=1}^d \sigma_s^{ik} \, \partial_i(\delta_l^h v_s) \, \delta_l^h g_s^k
         +2 \mu_s^k \, \delta_l^h v_s \, \delta_l^h g_s^k \right]dxds
\end{split}
\end{equation*}
Substituting this in \eqref{eq:ito_difference}, we get
\begin{equation*}
\begin{split} 
\int_\mathscr{D} & \eta^2 |\delta_l^h v_t|^2 dx \\
 \leq & I_0 +I_1
       -2 \int_0^t\int_\mathscr{D}\eta ^2 \sum_{i,j=1}^d \Big[a_s^{ij}
        -\frac{1}{2}\sum_{k\in \mathbb{N}}\sigma_s^{ik}\sigma_s^{jk} \Big]  
        \partial_i(\delta_l^h v_s) \, \partial_j(\delta_l^h  v_s) dxds\\
 &  +I_2 +I_3
         + \mathscr{M}_t^h + \bar{I_4}.\\
\end{split}
\end{equation*}
which on using Assumptions A-\ref{ass:parabolicity}, A-\ref{ass:der_boundedness} (with $n=1$) and Young's inequality for an $\epsilon>0$ gives
\begin{equation}
 \begin{split} \label{eq:youngs}
  \int_\mathscr{D}  \eta^2 & |\delta_l^h v_t|^2  dx
                         \leq \int_\mathscr{D}\eta^2 |\delta_l^h v_0|^2 dx
                        -2 \kappa \int_0^t\int_\mathscr{D}\eta ^2 |\nabla(\delta_l^hv_s)|^2 dxds  
                        + \mathscr{M}_t^h\\
 + &  \int_0^t\int_\mathscr{D}\sum_{i,j=1}^d \big[ \epsilon K|\eta \partial_i(T_l^hv_s)|^2
                        + \epsilon K | \eta \partial_i(\delta_l^h v_s)|^2 
                        + \frac{C}{\epsilon} |\partial_j\eta \delta_l^h  v_s|^2\big] \,  \, dxds \\
+ & \int_0^t\int_\mathscr{D}\eta^2\left[2\delta_l^h f_s \, \delta_l^h v_s 
                        + \frac{C_{K,d}}{\epsilon}\sum_{i=1}^d|\partial_i(T_l^hv_s)|^2
                        +\frac{ C_{K,d}}{\epsilon}|T_l^hv_s|^2
                         \right. \\
&  \left. + C\sum_{k\in \mathbb{N}}|\delta_l^hg_s^k|^2+  \epsilon C_K \sum_{i=1}^d| \partial_i(\delta_l^h v_s)|^2 
                         + \frac{C_{K}}{\epsilon}|\delta_l^hv_s|^2 \right]dxds.
 \end{split}
 \end{equation} 
 Now extending $\eta, f,g$ and $v$ to $\mathbb{R}^d$ by setting them to $0$ on $\mathbb{R}^d\setminus \mathscr{D}$ and using the fact that $\supp \eta \subset \mathscr{D}$ and $\supp (T_l^{-h}\eta) \subset \mathscr{D}$ for our choice of $h$, we get
 \begin{equation}
 \begin{split}\label{eq:bound_bar_f}
 \int_\mathscr{D}\eta^2 \,  \delta_l^h f_s \,  \delta_l^h v_s dx
           &  =  \int_{\mathbb{R}^d}\eta^2 \, \delta_l^h f_s \, \delta_l^h v_s dx\\
 &  = \int_{\mathbb{R}^d}\eta^2 \,  \frac{1}{h} T_l^hf_s \, \delta_l^h v_s  
            dx-\int_{\mathbb{R}^d}\eta^2 \,  \frac{1}{h} f_s \, \delta_l^h v_s dx \\
 &   = \int_{\mathbb{R}^d} T_l^{-h}(\eta^2) \frac{1}{h}f_s \, T_l^{-h}(\delta_l^h v_s)  
            dx-\int_{\mathbb{R}^d}\eta^2 \,  \frac{1}{h} f_s \, \delta_l^h v_s  dx \\
 &   = \int_{\mathbb{R}^d}f_s\frac{1}{h}\big[T_l^{-h}(\eta^2 \delta_l^h v_s)-(\eta^2 \delta_l^h v_s)\big] dx \\
 &  = - \int_{\mathbb{R}^d}f_s \, \delta_l^{-h}(\eta^2 \, \delta_l^h v_s) dx 
= - \int_{\mathscr{D}}f_s \,\delta_l^{-h}(\eta^2 \, \delta_l^h v_s) dx \\
 &  \leq \epsilon \int_{\mathscr{D}}|\delta_l^{-h}(\eta^2 \, \delta_l^h v_s)|^2 dx  
                +\frac{1}{\epsilon}\int_{\mathscr{D}}|f_s|^2dx
\end{split}
\end{equation}
 where last inequality has been obtained using Young's inequality.

Since $\eta^2 \, \delta_l^h v_s \in H^1(\mathscr{D})$, 
using the relation between difference quotients 
and weak derivatives 
(see e.g. \cite[Ch. 5, Sec. 8, Theorem 3]{evans97}), we have
\begin{equation*}
 \int_{\mathscr{D}}|\delta_l^{-h}(\eta^2 \, \delta_l^h v_s)|^2 dx = \int_{\mathscr{D}_l^h(\eta)}|\delta_l^{-h} 
                                  (\eta^2 \, \delta_l^h v_s)|^2 dx \leq C\int_{\mathscr{D}}| 
                                  \nabla(\eta^2 \, \delta_l^h v_s)|^2 dx 
 \end{equation*}
for some constant $C$ and $\mathscr{D}_l^h(\eta):=\supp \eta \cup \supp (T_l^h\eta) \cup \supp (T_l^{-h}\eta) \Subset \mathscr{D}$.
Substituting this in \eqref{eq:bound_bar_f}, we get
\begin{equation}
\begin{split} \label{eq:bar_f}
\int_\mathscr{D}\eta^2 \, \delta_l^h f_s \, &  \delta_l^h v_s dx \leq \epsilon  C \int_{\mathscr{D}}|\nabla(\eta^2 \, \delta_l^h v_s)|^2 dx
                     +\frac{1}{\epsilon} \int_{\mathscr{D}}|f_s|^2dx \\
 & = \epsilon C \int_{\mathscr{D}}|\eta^2 \, \nabla(\delta_l^h v_s)
                      +2 \eta \, \nabla \eta \, \delta_l^h v_s|^2 dx 
                      + \frac{1}{\epsilon} \int_{\mathscr{D}}|f_s|^2 dx \\
 & \leq  \epsilon C \int_{\mathscr{D}}
                       |\eta  \,  \nabla(\delta_l^h v_s)|^2 dx 
+\epsilon  C \mu^{-2} \int_{\mathscr{D}} 
             |(\eta \, \delta_l^h v_s)|^2 dx +\frac{1}{\epsilon} \int_{\mathscr{D}}|f_s|^2dx.
\end{split}
\end{equation}
Similarly,
\begin{equation*}
\begin{split} 
  \int_\mathscr{D} &\eta^2  |T_l^h v_s|^2 dx = \int_{\mathscr{D}_l^h(\eta)} \eta^2  |T_l^h v_s|^2 dx
= \int_{\mathscr{D}_l^h(\eta)} |T_l^{-h}\eta|^2  |v_s|^2 dx \leq C  \int_{\mathscr{D}} |v_s|^2 dx
  \end{split}
\end{equation*}
and
\begin{equation*}
\begin{split} \label{eq:bound_partial_v}
\sum_{i=1}^d \int_\mathscr{D}\eta^2  |\partial_i (T_l^h v_s)|^2 dx  
& = \sum_{i=1}^d \int_{\mathscr{D}_l^h(\eta)} \eta^2  |T_l^h(\partial_i v_s)|^2 dx \\
& \leq C \sum_{i=1}^d  \int_{\mathscr{D}} |\partial_i v_s|^2 dx 
= C  \int_{\mathscr{D}} |\nabla v_s|^2 dx.
  \end{split}
\end{equation*}
Using the assumption 
$g \in  L^2(\Omega \times (0,T), \mathscr{P};H^{1}(\mathscr{D};\ell^2))$ 
and the property of difference quotients mentioned above,
\begin{equation*}
\label{eq:bound_g}
\begin{split} 
\sum_{k\in\mathbb{N}}  \int_\mathscr{D}\eta^2 |\delta_l^h \, g_s^k|^2 dx  
 =  \sum_{k\in\mathbb{N}}  \int_{\mathscr{D}_l^h(\eta)}\eta^2 |\delta_l^h g_s^k|^2 dx
\leq C\sum_{k\in\mathbb{N}}  \int_{\mathscr{D}} 
                |\nabla g_s^k|^2 dx.
\end{split}
\end{equation*}
Similarly, 
$v \in L^2(\Omega \times (0,T), \mathscr{P}; H^1(\mathscr{D}))$
and the property of difference quotients imply
 \begin{equation}
 \begin{split} \label{eq:bound_delta_v}
 & \int_\mathscr{D}  |\delta_l^h v_s|^2 dx \leq C    \int_{\mathscr{D}} |\nabla v_s|^2 dx.
  \end{split}
 \end{equation}
 Substituting \eqref{eq:bar_f}-\eqref{eq:bound_delta_v} in \eqref{eq:youngs}, we get
\begin{equation} 
  \begin{split} \label{eq: diff_quotient_est} 
 \int_\mathscr{D} \eta^2 |\delta_l^h v_t|^2 & dx 
                      \leq  C \int_\mathscr{D}|\nabla v_0|^2 dx 
                      -2 \kappa \int_0^t\int_\mathscr{D}\eta ^2 |\nabla(\delta_l^hv_s)|^2 dxds 
                      \\
 & + \mathscr{M}_t^h + \int_0^t\int_\mathscr{D} \Big[ \frac{ C_{K,d}}{\epsilon}\mu^{-2}|\nabla v_s|^2 + \epsilon C_{K}  
                      |\eta \nabla(\delta_l^hv_s)|^2 
                     +\frac{1}{\epsilon} |f_s|^2 \Big. \\
&  \Big. + \frac{C_{K,d}}{\epsilon}|v_s|^2 +C \sum_{k\in \mathbb{N}}|\nabla g_s^k|^2  
                       \Big] dxds.
 \end{split}
 \end{equation}
Further, it can be seen that the process $\mathscr{M}_t^h$
defined in \eqref{eq:ito_difference} is a local martingale where a localizing sequence of stopping times converging to $T$ as $n \to \infty$ is given by
\begin{equation} \label{eq:stop_time}
 \tau_n:=\inf \{t\in[0,T]: |\eta \delta_l^hv_s|_{L^2(\mathscr{D})}| > n\} \wedge T. 
\end{equation} 
 Thus, replacing $t$ by $t\wedge \tau_n$ in \eqref{eq: diff_quotient_est}, then taking expectation and choosing $\epsilon >0$ small enough such that $2\kappa -\epsilon C_{K}=C_\kappa >0$ and finally using Fatou's lemma, we get
\begin{equation} 
  \begin{split} \label{eq:high_bound_1}
 &\mathbb{E}\int_\mathscr{D} \eta^2 |\delta_l^h v_t|^2 dx+C_\kappa  
                    \mathbb{E}\int_0^t\int_\mathscr{D}\eta ^2 |\nabla(\delta_l^hv_s)|^2 dxds  \leq C \mathbb{E}\int_\mathscr{D}|\nabla v_0|^2 dx \\
& + \mathbb{E}\int_0^t\int_\mathscr{D} \Big[\frac{ C_{K,d}}{\epsilon}\mu^{-2}|\nabla v_s|^2  
                 + \frac{1}{\epsilon} |f_s|^2 + \frac{C_{K,d}}{\epsilon}|v_s|^2 +C \sum_{k\in  
                   \mathbb{N}}|\nabla g_s^k|^2 \Big]dxds.
 \end{split}
 \end{equation}  
Using the inequalities of Burkholder--Davis--Gundy, 
H\"older and Young together with the estimates above we get that
 \begin{equation}
 \begin{split} \label{eq:sup_M}
 &\mathbb{E}\sup_{0\leq t \leq T}|\mathscr{M}^h_{t\wedge \tau_n}|=\mathbb{E}\sup_{0\leq t \leq T}\Big|2   
                                     \sum_{k\in \mathbb{N}}\int_0^{t\wedge  
                                     {\tau_n}}\int_\mathscr{D}\eta^2 \delta_l^h(M_s^kv_s+g_s^k)   
                                      \delta_l^h v_sdxdW_s^k\Big| \\
 & \leq 4\mathbb{E} \Big(\sum_{k\in \mathbb{N}}\int_0^{\tau_n}\Big| 
                           2\int_\mathscr{D}\eta^2 \delta_l^h(M_s^kv_s+g_s^k)   
                            \delta_l^h v_sdx\Big|^2ds\Big)^\frac{1}{2} \\
 & \leq  8 \mathbb{E}\Big(\sum_{k\in \mathbb{N}}\int_0^{\tau_n} 
                           |\eta \,\delta_l^h(M_s^kv_s+g_s^k)|^2_{L^2(\mathscr{D})}   
                            |\eta \, \delta_l^h v_s|^2_{L^2(\mathscr{D})}ds\Big)^\frac{1}{2} \\
 & \leq  \frac{1}{2} \mathbb{E} \sup_{0 \leq t \leq T}|\eta \, \delta_l^h v_t| 
                           ^2_{L^2(\mathscr{D})}+C\sum_{k\in \mathbb{N}}\mathbb{E}\int_0^{\tau_n} 
                           |\eta \,\delta_l^h(M_s^kv_s+g_s^k)|^2_{L^2(\mathscr{D})} ds \\                                                      
 &  \leq  \frac{1}{2} \mathbb{E} \sup_{0 \leq t \leq T}|\eta \, \delta_l^h v_t| 
                           ^2_{L^2(\mathscr{D})}+C \zeta^{-2}\mathbb{E}\int_0^{\tau_n}\!\!\!\int_{\mathscr{D}} 
                          \! \big[|\nabla v_s|^2  
                 +|f_s|^2 +|v_s|^2 + |\nabla g_s|_{\ell^2}^2 \big]dxds.
 \end{split} 
 \end{equation}
Replacing $t$ by $t\wedge \tau_n$ in \eqref{eq: diff_quotient_est}, 
taking the supremum over $t\in[0,T]$ and using  \eqref{eq:sup_M} 
we obtain
\begin{equation*} 
  \begin{split} 
& \mathbb{E}\sup_{0\leq t \leq T}\int_\mathscr{D} \eta^2 |\delta_l^h v_{t\wedge \tau_n}|^2 dx \\
& \leq C \zeta^{-2} \left[\mathbb{E}  \int_\mathscr{D}|\nabla v_0|^2 dx  
                 + \mathbb{E}\int_0^T\int_\mathscr{D} \Big[|\nabla v_s|^2  
                   + |f_s|^2 + |v_s|^2 + |\nabla g_s|_{\ell^2}^2 \Big]\,dxds\right],
\end{split}
\end{equation*} 
which, on applying Fatou's lemma, yields
 \begin{equation*} 
  \begin{split} 
& \mathbb{E}\sup_{0\leq t \leq T}\int_\mathscr{D} \eta^2 |\delta_l^h v_t|^2 dx \\
& \leq C \zeta^{-2} \left[\mathbb{E} \int_\mathscr{D}|\nabla v_0|^2 dx  
                 + \mathbb{E}\int_0^T\int_\mathscr{D} \Big[|\nabla v_s|^2  
                   + |f_s|^2 + |v_s|^2 + |\nabla g_s|_{\ell^2}^2 \Big]\,dxds\right],
 \end{split}
 \end{equation*} 
where $C=C(K,d,\epsilon)$. Now note that the right hand side of above equation and \eqref{eq:high_bound_1} are independent of $h$ and are finite and hence using e.g.~\cite[Ch. 5, Sec. 8, Theorem 3]{evans97}), we get \eqref{eq:H_2_regularity}. 
\end{proof}

We now extend the result to the case $n=2$ as follows.
From Lemma~\ref{lem:H_2_regularity} we have that 
$v$ is a continuous $H^1(\mathscr{D'})$-valued adapted process
such that $v \in L^2(\Omega \times (0,T),\mathscr{P}; H^2(\mathscr{D'}))$,
and it satisfies~\eqref{eq:lin_spde}.
If Assumptions A-\ref{ass:der_boundedness} and  A-\ref{ass:der_initial} 
hold for $n=2$, then 
from~\eqref{eq:lin_spde}, we get
\begin{equation}
\begin{split}\label{eq:sol_high}
d(\partial_l v_t) & =\partial_l(L_tv_t+f_t)dt+\sum_{k\in \mathbb{N}}\partial_l(M_t^kv_t+g_t^k)dW_t^k\\
&= \big(L_t(\partial_l v_t)+\bar{f}_t\big)dt + \sum_{k\in \mathbb{N}}\big(M_t^k(\partial_l v_t)+\bar{g}_t^k \big)dW_t^k
 \end{split}
\end{equation}
on $[0,T] \times \mathscr{D}'$, where
\[
\bar{f}_t:=\sum_{j=1}^d \partial_j\Big(\sum_{i=1}^d \partial_la_t^{ij} \, \partial_iv_t\Big) +\sum_{i=1}^d \partial_lb_t^i \, \partial_iv_t+ \partial_lc_t \, v_t+ \partial_l f_t
\]
and
\[
\bar{g}_t^k:=\sum_{i=1}^d \partial_l\sigma_t^{ik} \, \partial_i v_t+ \partial_l\mu_t^k \, v_t+ \partial_lg_t^k.
\]
Using Assumptions A-\ref{ass:der_boundedness} , A-\ref{ass:der_initial} with $n=2$
we get that $\bar{f}\in L^2(\Omega \times (0,T),\mathscr{P};L^2(\mathscr{D}'))$ 
and $\bar{g} \in L^2(\Omega \times (0,T),\mathscr{P};H^1(\mathscr{D}';\ell^2))$.

Thus replacing $f, g^k,\mathscr{D}$ in \eqref{eq:lin_spde} 
by $\bar{f},\bar{g}^k$ and $\mathscr{D'}$ respectively, 
we see that $z=\partial_l v$ satisfies \eqref{eq:lin_spde}.
Clearly $z \in C([0,T];L^2(\mathscr{D'}))$ almost surely 
and $z \in L^2(\Omega \times (0,T); H^1(\mathscr{D'}))$ 
and hence all the assumptions of Lemma~\ref{lem:H_2_regularity}
are satisfied for the new linear equation~\eqref{eq:sol_high}. Therefore for all open $\mathscr{D''}\Subset \mathscr{D'}$ such that $\dist(\mathscr{D}'',\partial \mathscr{D}')<1$, we have
 \begin{equation*}
 \begin{split}
\mathbb{E}\sup_{0\leq t\leq T}|\partial_i z_t|_{L^2(\mathscr{D''})}^2
                                +\mathbb{E}\int_0^T & |\partial_i z_t|^2_{H^1(\mathscr{D''})} dt 
 \leq C \dist(\mathscr{D''},\partial \mathscr D')^{-2}\Bigg[\mathbb{E} \int_\mathscr{D'}|\nabla z_0|^2 dx  \\
         &        + \mathbb{E}\int_0^T\int_\mathscr{D'} \Big[|\nabla z_t|^2  
                   + |\bar{f}_t|^2 + |z_t|^2 + |\nabla \bar{g}_t|_{\ell^2}^2 \Big]dxdt\Bigg]\,.
\end{split} 
 \end{equation*}
 which, substituting back the values of $\bar{f},\bar{g}^k$ and $z=\partial_l v$ and then  using Assumption A-\ref{ass:der_boundedness} with $n=2$  and \eqref{eq:H_2_regularity}, gives

\begin{equation}\label{eq:H_3_regularity}
\begin{split}
  \mathbb{E}\sup_{0\leq t\leq T}&|\partial_i \partial_l v_t|_{L^2(\mathscr{D''})}^2
                                +\mathbb{E}\int_0^T|\partial_i \partial_l v_t|^2_{H^1(\mathscr{D''})} dt 
   \\
  \leq &  C \dist(\mathscr{D}'',\partial \mathscr D')^{-2}\Bigg[\mathbb{E} \int_\mathscr{D'}\sum_{|\gamma|\leq 2}|D^\gamma v_0|^2 dx \\ 
       &          + \mathbb{E}\int_0^T \!\!\int_\mathscr{D'} \Big[\sum_{|\gamma|\leq 2}|D^\gamma v_t|^2  
                   +\sum_{|\gamma|\leq 1}|D^\gamma f_t|^2   +\sum_{|\gamma|\leq 2} |D^\gamma g_t|_{\ell^2}^2 \Big]dxdt\Bigg]
\end{split} 
\end{equation}
for all $i=1,\ldots,d$ and open $\mathscr{D''}\Subset \mathscr{D'}$ where $C=C(d,T,K,\kappa)$.
Repeating the above procedure $k$ times, we have the following result.

\begin{lemma}\label{lem:H_k_regularity}
Assume that $v$ is a continuous $L^2(\mathscr{D})$-valued adapted process 
satisfying \eqref{eq:lin_spde} and such that $v \in L^2(\Omega \times (0,T),\mathscr{P}; H^1(\mathscr{D})).$ 
If Assumptions  A-\ref{ass:parabolicity}, A-\ref{ass:der_boundedness} 
and  A-\ref{ass:der_initial} hold for $n=k$, then
\begin{equation*}
\begin{split}
& \mathbb{E}\sup_{0\leq t\leq T}|\partial_{i_k}\ldots \partial_{i_1} v_t|_{L^2(\mathscr{D}^k)}^2
                                +\mathbb{E}\int_0^T|\partial_{i_k}\ldots \partial_{i_1} v_t|^2_{H^1(\mathscr{D}^k)} dt \leq C \dist(\mathscr{D}^k,\partial \mathscr D^{k-1})^{-2}\\
 & \Bigg[\mathbb{E} \int_{\mathscr{D}^{k-1}}\sum_{|\gamma|\leq k}|D^\gamma v_0|^2 dx  
                 + \mathbb{E}\int_0^T\int_{\mathscr{D}^{k-1}} \Big[\sum_{|\gamma|\leq k}|D^\gamma v_t|^2  
                   +\sum_{|\gamma|\leq k-1}|D^\gamma f_t|^2   \Big. \\
  & \quad  \Big. +\sum_{|\gamma|\leq k}| D^\gamma g_t|_{\ell^2}^2 \Big]dxdt\Bigg]
\end{split} 
\end{equation*}
for all $i_k=1,\ldots,d$ and open $\mathscr{D}^k\Subset \mathscr{D}^{k-1}$ such that $ \dist(\mathscr{D}^k,\partial \mathscr D^{k-1})<1$ where $C=C(d,T,K,\kappa)$.
\end{lemma}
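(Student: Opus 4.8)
The plan is to establish Lemma~\ref{lem:H_k_regularity} by induction on $k$, taking Lemma~\ref{lem:H_2_regularity} as the base case $k=1$ and using, at each step, exactly the single-differentiation procedure already carried out above in the passage from $n=1$ to $n=2$ that produced \eqref{eq:sol_high}. Fix a chain of open sets $\mathscr{D}=\mathscr{D}^0 \Supset \mathscr{D}^1 \Supset \cdots \Supset \mathscr{D}^k$ with $\mathscr{D}^{j+1}\Subset \mathscr{D}^{j}$ for each $j$, and assume that the assertion of Lemma~\ref{lem:H_k_regularity} has been proved for every order strictly below $k$.

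Assume A-\ref{ass:der_boundedness} and A-\ref{ass:der_initial} hold with $n=k$. First I would apply the induction hypothesis at order $k-1$ to $v$ itself, on the base domain $\mathscr{D}$: the level-$k$ data satisfy A-\ref{ass:der_initial} at order $k-1$ (because $H^k\hookrightarrow H^{k-1}$ for $v_0$ and $g$, and $H^{k-1}\hookrightarrow H^{k-2}$ for $f$), and $v\in L^2(\Omega\times(0,T),\mathscr{P};H^1(\mathscr{D}))$ by hypothesis, so this gives $v\in L^2(\Omega\times(0,T),\mathscr{P};H^{k}(\mathscr{D}^1))$ together with the corresponding supremum bounds up to order $k-1$. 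Next, as in \eqref{eq:sol_high}, I differentiate \eqref{eq:lin_spde} once to see that $z:=\partial_{i_1}v$ solves a linear equation of the same type on $\mathscr{D}^1$, with the same operators $L$ and $M^k$ but the new free terms $\bar f$ and $\bar g^k$ given by the product-rule expressions displayed after \eqref{eq:sol_high}, and with initial value $z_0=\partial_{i_1}v_0$.

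The key verification is that $(\bar f,\bar g,z_0)$ satisfy A-\ref{ass:der_initial} at order $k-1$ relative to $\mathscr{D}^1$. Indeed $z_0\in H^{k-1}(\mathscr{D}^1)$ since $v_0\in H^k$; the term $\partial_{i_1}g^k$ lies in $H^{k-1}$ since $g\in H^k$; and the remaining contributions to $\bar g^k$, schematically of the form (a derivative of a coefficient)$\times$(a derivative of $v$), lie in $H^{k-1}(\mathscr{D}^1)$ because A-\ref{ass:der_boundedness} with $n=k$ bounds $k$ spatial derivatives of the coefficients while the first step has already placed $v$ in $H^{k}(\mathscr{D}^1)$. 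The same count, with one further derivative lost to the outer $\partial_j$ in the leading term of $\bar f$, gives $\bar f\in H^{k-2}(\mathscr{D}^1)$. Since also $z\in L^2(\Omega\times(0,T),\mathscr{P};H^1(\mathscr{D}^1))$ by the first step, the induction hypothesis at order $k-1$ now applies to $z$ on the base domain $\mathscr{D}^1$ and controls $\partial_{i_k}\cdots\partial_{i_2}z=\partial_{i_k}\cdots\partial_{i_1}v$ in $\mathbb{E}\sup_{t}|\cdot|_{L^2(\mathscr{D}^k)}^2+\mathbb{E}\int_0^T|\cdot|_{H^1(\mathscr{D}^k)}^2\,dt$ by the $H^{k-1}$-norm of $\bar g$, the $H^{k-2}$-norm of $\bar f$, and the lower-order norms of $z$ over $\mathscr{D}^{k-1}$.

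It remains to re-expand $\bar f$ and $\bar g$ in terms of the original data and to absorb the coefficient derivatives using A-\ref{ass:der_boundedness} (with $n=k$) together with the induction hypotheses at orders $1,\dots,k-1$ applied to $v$; collecting the resulting terms reproduces precisely the right-hand side asserted in Lemma~\ref{lem:H_k_regularity}, with a constant depending only on $\mathscr{D}^k,d,T,K,\kappa$. I expect the main difficulty to be organizational rather than analytic: keeping the Sobolev indices consistent through the product rule, so that the pair $(f\in H^{n-1},\,g\in H^{n})$ reproduces itself after one differentiation and one application of the lower-order result, and arranging the finitely many nested domains so that each invocation of a previous order is on a compactly contained subdomain. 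No analytic ingredient beyond the base case Lemma~\ref{lem:H_2_regularity} and its estimate \eqref{eq:H_2_regularity} is required.
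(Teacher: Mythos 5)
Your proposal is correct and follows essentially the same route as the paper: the paper also treats Lemma~\ref{lem:H_2_regularity} as the base case and obtains Lemma~\ref{lem:H_k_regularity} by repeatedly differentiating \eqref{eq:lin_spde}, observing that $\partial_l v$ solves an equation of the same form with the new free terms $\bar f$, $\bar g$, and checking that these data satisfy A-\ref{ass:der_boundedness} and A-\ref{ass:der_initial} at one lower order on a compactly contained subdomain. Your formalization of the paper's ``repeating the above procedure $k$ times'' as an induction, with the bookkeeping of nested domains and the verification that the pair $(f\in H^{n-1},\,g\in H^{n})$ reproduces itself, is exactly what the paper intends.
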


We immediately see that Theorem~\ref{thm:H_m_regularity} follows from 
Lemma~\ref{lem:H_k_regularity}.
Using Theorems~\ref{thm:sol_bound} and~\ref{thm:H_m_regularity}, we can now prove  
Theorem~\ref{thm:higher_regularity}.

\begin{proof}[Proof of Theorem~\ref{thm:higher_regularity}]
Let $u$ be the solution to~\eqref{eq:spde} given by Theorem~\ref{thm:sol_bound}.
Then considering $f_t(u_t, \nabla u_t)+f_t^0$ as a new free term $f_t$, 
we observe that $u$ satisfies~\eqref{eq:lin_spde} with such free term. 

Now under the Assumptions A-\ref{ass:f}, A-\ref{ass:initial} and
due to Theorem~\ref{thm:sol_bound}, applied with $p \geq 2\alpha-2$, we get 
the estimate~\eqref{eq:p_bound} and hence
\begin{equation}\label{eq: free_term_bound_L^2}
\begin{split}
\mathbb{E}\int_0^T |f_t|^2  _{L^2(\mathscr{D})}dt &= \mathbb{E}\int_0^T \int_\mathscr{D}|f(u_t, \nabla u_t)+f_t^0|^2 dxdt \\
\leq & 2  \Big[\mathbb{E}\int_0^T \int_\mathscr{D} K^2(1+|u_t|)^{2\alpha-2} dxdt + \mathbb{E}\int_0^T \int_\mathscr{D}|f_t^0|^2 dxdt \Big]\\
\leq & \textcolor{red}{C} \Big[1 + \mathbb{E} \sup_{0\leq t\leq T} \int_\mathscr{D}|u_t|^{2\alpha-2} dx\Big]+ 2\mathbb{E}\int_0^T \int_\mathscr{D}|f_t^0|^2 dxdt 
<\infty.
\end{split}
\end{equation}
Hence we can apply Theorem~\ref{thm:H_m_regularity} with $n=1$ thus proving
the first claim in \textit{(i)}.
Again using \eqref{eq:H_2_regularity} for the new free term $f_t$ we get for each $i=1,\ldots,d$, 
\begin{equation*}
\begin{split}
 \mathbb{E}&\sup_{0\leq t\leq T}|\partial_i u_t|_{L^2(\mathscr{D'})}^2
+   \mathbb{E}\int_0^T|\partial_i u_t|^2_{H^1(\mathscr{D'})} dt \leq  C\dist(\mathscr{D'},\partial \mathscr{D})^{-2} \mathbb{E}\Bigg[ \int_{\mathscr D}|\nabla \phi|^2dx\\ 
&+ \int_0^T \int_{\mathscr D}\Big[|\nabla u_t|^2+|f_t|^2+|u_t|^2 + \sum_{k \in \mathbb{N}}|\nabla g_t^k|^2 \Big] dxdt\Bigg]
\end{split} 
\end{equation*}
which on using \eqref{eq: free_term_bound_L^2}, then Theorem~\ref{thm:sol_bound} with $p=2\alpha-2$ and finally H\"older's inequality proves \eqref{eq:blow1_regularity}.

Further if $f$ is a function of $t,\omega,x$ and $r$ only such that  \eqref{eq:f_prime} holds, then taking $f_t(u_t)+~f_t^0$ as a new free term $f_t$, similarly as above, we get 
\begin{equation}\label{eq: free_term_bound_H^1}
\begin{split}
&\mathbb{E}\int_0^T |\partial_i f_t|_{L^2(\mathscr{D})}^2 dt 
= \mathbb{E}\int_0^T \int_\mathscr{D}|\partial_i u_t \, \partial_r f_t(u_t)
+ \partial_i f_t(u_t)+\partial_if_t^0|^2 dxdt \\
&\leq \textcolor{red}{C} \mathbb{E}\int_0^T \int_\mathscr{D} \big[ 
|\nabla u_t|^2(1+|u_t|)^{2\alpha-4} + (1+|u_t|)^{2\alpha - 2} + |\partial_i f^0_t|^2 \big] dxdt \\
&\leq \textcolor{red}{C} \mathbb{E} \int_0^T \int_\mathscr{D}\big[1 + |\nabla u_t|^2
+|\nabla u_t|^2|u_t|^{2\alpha-4} + |u_t|^{2\alpha - 2} 
+ |\partial_if_t^0|^2 \big] dxdt <\infty
\end{split}
\end{equation}
for any $i \in \{1,\ldots,d \}$. 
Hence $f(u) + f^0$ 
is in $L^2(\Omega \times (0,T), \mathscr{P}, H^1(\mathscr{D}))$.
Thus all the conditions of Theorem~\ref{thm:H_m_regularity} are satisfied 
for $n=2$. 
This yields the first claim in \textit{(ii)}.
 Again, using \eqref{eq:H_3_regularity} for the new free term $f_t$, we obtain for each $i,j=1,\ldots,d$
\begin{equation*}
\begin{split}
  \mathbb{E}\sup_{0\leq t\leq T}&|\partial_i \partial_j u_t|_{L^2(\mathscr{D''})}^2
                                +\mathbb{E}\int_0^T|\partial_i \partial_j u_t|^2_{H^1(\mathscr{D''})} dt 
   \\
  \leq  &  C \dist(\mathscr{D}'',\partial \mathscr D')^{-2}\mathbb{E}\Bigg[ \sum_{\gamma \leq 2}\int_{\mathscr D'}|D^\gamma \phi|^2dx + \int_0^T \int_{\mathscr D'}\Big[\sum_{\gamma \leq 2}|D^\gamma u_t|^2\\ 
& + \sum_{\gamma \leq 1}|D^\gamma f_t|^2 + \sum_{\gamma \leq 2}|D^\gamma g_t|^2_{\ell^2} \Big] dxdt\Bigg] \\
\leq  &  C \dist(\mathscr{D}'',\partial \mathscr D')^{-2}\mathbb{E}\Bigg[ \sum_{\gamma \leq 2}\int_{\mathscr D'}|D^\gamma \phi|^2dx + \int_0^T \int_{\mathscr D'}\Big[\sum_{\gamma \leq 1}|D^\gamma u_t|^2\\ 
& + \sum_{\gamma \leq 1}|D^\gamma f_t|^2 + \sum_{\gamma \leq 2}|D^\gamma g_t|^2_{\ell^2} \Big] dxdt\Bigg] \\
& + C \dist(\mathscr{D}'',\partial \mathscr D')^{-2}\mathbb{E}\int_0^T \int_{\mathscr D'}\sum_{\gamma = 2}|D^\gamma u_t|^2 dxdt
\end{split} 
\end{equation*}
which on using \eqref{eq: free_term_bound_L^2}, \eqref{eq: free_term_bound_H^1}, then Theorem~\ref{thm:sol_bound} with $p=2\alpha-2$ and \eqref{eq:blow1_regularity} proves \eqref{eq:blow2_regularity}.
\end{proof}
\begin{remark} \label{rem:regularity}
Note that to prove even higher regularity than that given by Theorem~\ref{thm:higher_regularity} one would need to show that
\[
\mathbb{E}\int_0^T |\partial_j \partial_i f_t|_{L^2(\mathscr{D})}^2 dt < \infty\,.
\]
Using our approach we would require that  
\[
\mathbb{E}\int_0^T \int_{\mathscr{D}}|\partial_ju_t \partial_i u_t \partial_r^2 f_t(u_t)|^2 \, dxdt < \infty \,.
\]
However the $L^p$-estimates from Theorem~\ref{thm:sol_bound} are not sufficient.
To overcome this, one may try to formally apply $\partial_i$ to the SPDE~\eqref{eq:spde} and
then to try to get the analogous $L^p$-estimates for the equation 
for the derivative. 
However, since the semilinear term is no longer monotone,
the proof will break down.
\end{remark}

\section{Regularity in Weighted Spaces using $L^p$-theory \& Time Regularity}
\label{sec:Lp_reg} 
In this section, we raise the regularity of the solution to the SPDE \eqref{eq:spde} using $L^p$-theory from Kim~\cite{kim04}. 
The reason for using $L^p$-theory is that one gets better estimates 
for the solution of the corresponding linear equation, 
see Theorem~\ref{thm:kim}, given below, which follows immediately from Kim~\cite[Theorem 2.9]{kim04}.

We will use this together with the $L^p$-estimates 
we proved in Theorem~\ref{thm:sol_bound} to obtain regularity results 
(both space and time)
for the solution of the semilinear equation~\eqref{eq:spde},
see Theorems~\ref{thm:Lp_regularity} and~\ref{thm:time_reg} below.
In particular we obtain H\"older continuity in time 
of order $\frac{1}{2} - \frac{2}{q}$
for the solution to~\eqref{eq:spde} as a process in weighted $L^q$-space,
where $q$ comes from the integrability assumptions imposed on the data.

First, we introduce some notations, concepts and assumptions from Kim~\cite{kim04}.
For $r_0 >~ 0$ and $x\in \mathbb{R}^d$, let 
$B_{r_0}(x):=\{y \in \mathbb{R}^d : |x-y|< r_0 \}$.

\begin{definition}[Domain of class $C^1_u$]\label{def:domain}
The domain $\mathscr{D}\subset \mathbb{R}^d$ is said to be of class $C^1_u$ 
if for any $x_0 \in \partial\mathscr{D}$, there exist $r_0,K_0,L_0 >0$ and a one-one, onto continuously differentiable map $\Psi:B_{r_0}(x_0)\to G$, for a domain $G\subset \mathbb{R}^d$, satisfying the following: 
\begin{enumerate}[(i)]
\item $\Psi(x_0)=0$ and $\Psi\big(B_{r_0}(x_0)\cap \mathscr{D} \big)\subset 
\{y \in \mathbb{R}^d : y^1>0 \} $\,, 
\item $\Psi\big(B_{r_0}(x_0)\cap \partial \mathscr{D} \big)= G \cap \{ y\in \mathbb{R}^d: y^1=0$\},
\item $|\Psi|_{C^1(B_{r_0}(x_0))}\leq K_0$ and $|\Psi^{-1}(y_1)-\Psi^{-1}(y_2)|\leq K_0|y_1-y_2|$ for any $y_1,y_2 \in G$,
\item $|\Psi_x(x_1)-\Psi_x(x_2)|\leq L_0|x_1-x_2|$ for any $x_1,x_2 \in B_{r_0}(x_0)$.
\end{enumerate}  
\end{definition}
Let $\mathscr{D}$ be of class $C^1_u$ and $\rho(x):=\dist (x,\partial \mathscr{D})$. 
Then, by \cite[Lemma 2.5]{kim04} and \cite[Remark 2.7]{kim04a} (since $\mathscr{D}$ is bounded), there exists a bounded real valued function $\psi$ defined on $\bar{\mathscr{D}}$ satisfying
\begin{equation}\label{eq:psi}
\sup_{x\in \mathscr{D}}\rho^{|\gamma|}(x)|D^\gamma \partial_i\psi(x)|<\infty
\end{equation}
for any $i=1,\ldots, d$ and any multi-index $\gamma$, such that 
\[
\frac{1}{C} \rho \leq \psi \leq C \rho \,\,\,\text{in}\,\,\, \mathcal{D},
\]
for some constant $C$.
In other words, $\psi$ and $\rho$ are comparable in $\mathscr{D}$, and
in estimates they can be used interchangeably (up to multiplication by a constant).
Moreover this implies $\psi \geq 0$.

For $1 \leq q < \infty, \,\, \theta \in \mathbb{R}$ and a non-negative integer $n$, define the weighted Sobolev space $ H^{n,q}_\theta (\mathscr{D})$ by
\[
 H^{n,q}_\theta (\mathscr{D}):= \{u : \rho^{|\gamma|+(\theta-d)/q}D^\gamma u \in L^q(\mathscr{D})\,\,\, \text{for any}\,\,\, |\gamma|\leq n\}
\]
where the norm for $u \in  H^{n,q}_\theta (\mathscr{D})$ is given by
\[
|u|^q_{H^{n,q}_\theta }:= \sum_{i=0}^n \sum_{|\gamma|=i}\int_{\mathscr{D}}|D^\gamma u(x)|^q \rho^{\theta-d+iq}(x)dx . 
\] 
For functions $u:\mathbb{R}^d \to \mathbb{R}^{d'}$, we define the norm analogously and use the same notation.
The following result from Lototsky \cite{lot00} plays an important role in proving our results.
\begin{remark}\label{rem:lototsky}
The following are equivalent:
\begin{enumerate}[(i)]
\item $u \in  H^{n,q}_\theta (\mathscr{D})$ , 
\item $ u \in  H^{n-1,q}_\theta (\mathscr{D})$ and $\psi \partial_i u \in  H^{n-1,q}_\theta (\mathscr{D})$ for all $i=1,2,\ldots d$ ,
\item $ u \in  H^{n-1,q}_\theta (\mathscr{D})$ and $ \partial_i(\psi u) \in  H^{n-1,q}_\theta (\mathscr{D})$ for all $i=1,2,\ldots d$ .
\end{enumerate}
\end{remark}

Further, let
\[
\mathbb{H}^{n,q}_\theta (\mathscr{D}):=L^q(\Omega \times (0,T),\mathscr{P}, H^{n,q}_\theta (\mathscr{D}) ).
\]
In the rest of the article, we assume that
\begin{equation}\label{eq:theta}
q\geq 2 \quad \text{and} \quad d-2+q<\theta<d-1+q
\end{equation}
so that in view of~\cite[Remark 2.7]{kim04}, 
the assumption regarding existence of an $\mathscr{A}_{p,\theta}$-type set 
(see~\cite[Assumption 2.8]{kim04}), is satisfied. 
Finally, we need the following assumption on the coefficients:

\begin{assumption}\label{ass:unif_cont} 
For any $i,j=1,\ldots,d$,
\begin{enumerate}[(i)]
\item   the real valued coefficients $a^{ij}$ and their spatial derivatives up to order $n+1$ are $\mathscr{P} \times \mathscr{B}(\mathscr{D})$-measurable and bounded by $K$, 
\item the real-valued coefficients $b^i,\,\,c$ 
and their spatial derivatives up to order $n$ are 
$\mathscr{P} \times \mathscr{B}(\mathscr{D})$-measurable and are bounded by $K$, 
\item  the coefficients $\sigma^i=(\sigma^{ik})_{k=1}^\infty, \,\, \mu=(\mu^k)_{k=1}^\infty$ and their spatial derivatives up to order $n+1$ are $\ell^2$-valued $\mathscr{P} \times \mathscr{B}(\mathscr{D})$-measurable and almost surely
\[
\sum_{i=1}^d\sum_{k\in \mathbb{N}} \sum_{|\gamma|\leq n+1}|D^\gamma\sigma_t^{ik}(x)|^2+\sum_{k\in\mathbb{N}}\sum_{|\gamma|\leq n+1}|D^\gamma\mu_t^k(x)|^2 \leq K
\] 
for all $t$ and $x$, 
\item and for almost every $(t,\omega)$, the coefficients $a^{ij}(t,x)$ and $\sigma^i(t,x)$ are uniformly continuous in $x \in \mathscr{D}$.
 \end{enumerate}
\end{assumption}

Note that, the operator $L$ given by~\eqref{eq:def_L,M} is in divergence form but
the results from~\cite{kim04} are for operators in non-divergence form. 
One knows that~\eqref{eq:spde} can be expressed in non-divergence form if the
coefficients $a^{ij}$ are differentiable. 
Thus Assumption A-\ref{ass:unif_cont} implies Assumptions 2.2 and 2.3 in \cite{kim04}. 
Hence the following theorem follows from Theorem 2.9 of Kim~\cite{kim04}.

\begin{theorem}\label{thm:kim}
Assume $\mathscr{D}$ is of class $C^1_u$. 
Further, let Assumptions A-\ref{ass:parabolicity} and A-\ref{ass:unif_cont} hold
with some $n \geq 0$.
If $\psi f\in \mathbb{H}^{n,q}_\theta(\mathscr{D})$, 
$g \in \mathbb{H}^{n+1,q}_\theta(\mathscr{D};\ell^2)$ 
and $\psi^{\frac{2}{q}-1}\phi\in \mathbb{H}^{n+2,q}_\theta(\mathscr{D})$, then 
\begin{equation}
\label{eq:lin_spde_bdry}
\left\{
\begin{split}
dv_t & =(L_tv_t+f_t)dt+\sum_{k\in \mathbb{N}}(M_t^kv_t+g_t^k)dW_t^k
\,\,\,\ \text{on}\,\,\, [0,T]\times \mathscr{D},\\
v_t & =0 \,\,\, \text{on}\,\,\, \partial \mathscr{D}, \quad v_0 =\phi \,\,\, \text{on}\,\,\, \mathscr{D}
\end{split}
\right.
\end{equation}
has a unique solution $v$ such that $\psi^{-1}v\in \mathbb{H}^{n+2,q}_\theta (\mathscr{D})$.
\end{theorem}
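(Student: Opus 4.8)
The plan is to reduce Theorem~\ref{thm:kim} to a direct application of Theorem 2.9 in Kim~\cite{kim04}. The only genuine gap to bridge is that $L$ in~\eqref{eq:def_L,M} is written in divergence form, whereas Kim's result is stated for operators in non-divergence form. Since Assumption A-\ref{ass:unif_cont}(i) guarantees that each $a^{ij}$ is differentiable with bounded spatial derivatives up to order $n+1$, I would first rewrite
\begin{equation*}
L_t u = \sum_{i,j=1}^d a_t^{ij} \partial_i \partial_j u + \sum_{i=1}^d \tilde b_t^i \partial_i u + c_t u,
\qquad \tilde b_t^i := b_t^i + \sum_{j=1}^d \partial_j a_t^{ij}.
\end{equation*}
This puts $L$ in exactly the non-divergence form required by~\cite{kim04}, and leaves the noise operator $M^k$ unchanged.

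Next I would verify that the coefficients of the rewritten equation satisfy Kim's standing hypotheses (Assumptions 2.2 and 2.3 of~\cite{kim04}). The parabolicity bound A-\ref{ass:parabolicity} supplies the uniform stochastic ellipticity of the pair $(a^{ij},\sigma^{ik})$. The new drift $\tilde b^i$ inherits boundedness and the required spatial regularity from A-\ref{ass:unif_cont}: since $a^{ij}$ together with its derivatives up to order $n+1$ and $b^i$ together with its derivatives up to order $n$ are bounded by $K$, the coefficient $\tilde b^i = b^i + \sum_j \partial_j a^{ij}$ is bounded together with its spatial derivatives up to order $n$. The coefficients $c$, $\sigma^{ik}$, $\mu^k$ and their relevant derivatives are controlled directly by A-\ref{ass:unif_cont}(ii)--(iii), while the uniform-in-$x$ continuity of $a^{ij}$ and $\sigma^i$ demanded by Kim is precisely A-\ref{ass:unif_cont}(iv). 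Combined with the hypothesis that $\mathscr{D}$ is of class $C^1_u$ and the range of $\theta$ fixed in~\eqref{eq:theta} (which, via~\cite[Remark 2.7]{kim04}, guarantees the $\mathscr{A}_{p,\theta}$-type set of~\cite[Assumption 2.8]{kim04}), all hypotheses of~\cite[Theorem 2.9]{kim04} are met.

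Finally, the data assumptions translate verbatim: $\psi f \in \mathbb{H}^{n,q}_\theta(\mathscr{D})$, $g \in \mathbb{H}^{n+1,q}_\theta(\mathscr{D};l^2)$ and $\psi^{\frac{2}{q}-1}\phi \in \mathbb{H}^{n+2,q}_\theta(\mathscr{D})$ are exactly the integrability conditions imposed on the free term, the diffusion term and the initial datum in Kim's theorem. Invoking~\cite[Theorem 2.9]{kim04} then produces a unique solution $v$ of~\eqref{eq:lin_spde_bdry} with $\psi^{-1} v \in \mathbb{H}^{n+2,q}_\theta(\mathscr{D})$, which is the assertion. The one point demanding care is the bookkeeping in the divergence-to-non-divergence conversion: one must confirm that the extra term $\sum_j \partial_j a^{ij}$ does not push the smoothness requirement beyond what A-\ref{ass:unif_cont} supplies. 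This is precisely why that assumption is stated with derivatives of $a^{ij}$ up to order $n+1$ rather than $n$ — the conversion costs one derivative on the leading coefficients, and A-\ref{ass:unif_cont}(i) is formulated to absorb exactly that cost, so no further obstacle arises.
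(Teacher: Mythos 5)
Your proposal is correct and follows essentially the same route as the paper, which likewise reduces the statement to Theorem 2.9 of Kim~\cite{kim04} by rewriting $L$ in non-divergence form (using the differentiability of $a^{ij}$ supplied by A-\ref{ass:unif_cont}) and noting that the range of $\theta$ in~\eqref{eq:theta} yields the required $\mathscr{A}_{p,\theta}$-type set via~\cite[Remark 2.7]{kim04}. Your added bookkeeping on why the conversion costs one derivative on $a^{ij}$, and why A-\ref{ass:unif_cont}(i) is stated with derivatives up to order $n+1$, is a correct elaboration of what the paper leaves implicit.
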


In fact Theorem 2.9 in Kim~\cite{kim04} is proved even for fractional weighted Sobolev spaces
and under somewhat weaker assumptions. 
We do not use fractional spaces here to keep the presentation simpler. 
As to being able to use weaker assumptions: 
to obtain results for the semilinear equation~\eqref{eq:spde} we will need to apply our results from Section~\ref{sec: Lp}, 
in particular Theorem~\ref{thm:sol_bound} and thus we cannot substantially 
weaken our assumptions here.
Finally, we can state the main results on regularity for the solution to semilinear SPDE \eqref{eq:spde}.

\begin{theorem}\label{thm:Lp_regularity}
Assume $\mathscr{D}$ is of class $C^1_u$ and $u$ is the solution to \eqref{eq:spde}. Further, let Assumptions A-\ref{ass:parabolicity} to A-\ref{ass:initial} hold with $p\geq \max (q\alpha -q,2)$ and Assumption A-\ref{ass:unif_cont} holds with $n=0$. If for some $q$ satisfying \eqref{eq:theta}, $\psi^{\frac{2}{q}-1}\phi \in \mathbb{H}^{2,q}_\theta(\mathscr{D}),g~ \in~ \mathbb{H}^{1,q}_\theta(\mathscr{D};\ell^2)$ and $f^0\in \mathbb{H}_\theta^{0,q}(\mathscr{D})$, then 
$ \psi^{-1}u\in \mathbb{H}^{2,q}_\theta (\mathscr{D}).$

Moreover, in the case Assumption A-\ref{ass:unif_cont} holds with $n=1$ and almost surely
\begin{equation}\label{eq:f_kim}
\begin{split}
&|\partial_i f_t(x,r,z)| \leq K(1+|r|)^{\alpha - 1},\,\,\,
|\partial_rf_t(x,r,z)|\leq K(1+|r|)^{\alpha-2} \\
&\text{and}\,\,\,
|\partial_z f_t(x,r,z)| \leq K(1+|r|)^{\alpha - 1}
\end{split}
\end{equation}
for all $i=1,\dots, d$, $t\in[0,T], x\in \mathscr{D}, r\in \mathbb{R}$ and all $z\in \mathbb{R}^d$, if for some $q$ satisfying \eqref{eq:theta}, $\psi^{\frac{2}{q}-1}\phi \in \mathbb{H}^{3,q}_\theta(\mathscr{D})$, $g~ \in~ \mathbb{H}^{2,q}_\theta(\mathscr{D};\ell^2)$ and $f^0\in \mathbb{H}_\theta^{1,q}(\mathscr{D})$, then 
$ \psi^{-1}u\in \mathbb{H}^{3,\frac{q}{2}}_\theta (\mathscr{D})$.
\end{theorem}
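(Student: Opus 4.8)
The plan is to regard the semilinear and forcing terms as a single free term and to invoke the linear weighted $L^q$-theory of Theorem~\ref{thm:kim}, feeding it the a priori bounds of Theorem~\ref{thm:sol_bound} (and, for the second assertion, the output of the first). Throughout, the Kim-solution produced solves the linear equation \eqref{eq:lin_spde_bdry} with free term $F_t := f_t(u_t,\nabla u_t)+f_t^0$, which $u$ solves as well; being an $L^2$-solution, it is identified with $u$ by uniqueness (cf.\ Lemma~\ref{lem:uniqueness}), so the regularity transfers to $u$ itself.

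For the first assertion I would verify that $\psi F \in \mathbb{H}^{0,q}_\theta(\mathscr{D})$. Writing the norm as $\int_{\mathscr{D}} |\psi F|^q \rho^{\theta-d}\,dx \sim \int_{\mathscr{D}}|F|^q \rho^{\theta-d+q}\,dx$ and noting that $\theta-d+q>0$ by \eqref{eq:theta} (so $\rho^{\theta-d+q}$ is bounded on the bounded domain $\mathscr{D}$), the forcing part reduces to the assumed $f^0 \in \mathbb{H}^{0,q}_\theta$, while the semilinear part is dominated via A-\ref{ass:f} by $\int_{\mathscr{D}}(1+|u|)^{q(\alpha-1)}\,dx$; since $p \ge q\alpha-q$, Theorem~\ref{thm:sol_bound} makes $\mathbb{E}\int_0^T |u_t|_{L^{q(\alpha-1)}}^{q(\alpha-1)}\,dt$ finite. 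The remaining hypotheses of Theorem~\ref{thm:kim} with $n=0$ are exactly the assumed conditions on $g$ and $\phi$, so the theorem yields $\psi^{-1}u \in \mathbb{H}^{2,q}_\theta$.

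For the second assertion I would use the first as input: $\psi^{-1}u \in \mathbb{H}^{2,q}_\theta$, which by Remark~\ref{rem:lototsky} and $\psi \sim \rho$ controls $\nabla u$ and $\nabla^2 u$ in weighted $L^q$, in particular $\nabla^2 u$ against the weight $\rho^{\theta-d+q}$. Freezing $F$ as before and aiming at $\psi F \in \mathbb{H}^{1,q/2}_\theta$, I would differentiate the composite nonlinearity, $\partial_i[f(u,\nabla u)] = \partial_{x_i}f + \partial_r f\,\partial_i u + \partial_z f\cdot \partial_i\nabla u$, and bound it using \eqref{eq:f_kim}; the dominant contribution is $(1+|u|)^{\alpha-1}|\nabla^2 u|$. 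Pairing $(1+|u|)^{\alpha-1}\in L^q$ (from Theorem~\ref{thm:sol_bound}) against $|\nabla^2 u|\in L^q$ (weighted, from the first assertion) by H\"older's inequality with conjugate exponents $2,2$ produces a bound in $L^{q/2}$ — this pairing is precisely why the integrability exponent halves — and the weight $\rho^{\theta-d+q}$ splits evenly between the two factors, matching the bounded weight of the $L^p$-factor and the natural weight of $\nabla^2 u$. Applying Theorem~\ref{thm:kim} with $n=1$ at the exponent $q/2$ then gives $\psi^{-1}u \in \mathbb{H}^{3,q/2}_\theta$.

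The main obstacle is this second step: differentiating a nonlinearity of arbitrary polynomial growth and closing its weighted $L^{q/2}$-estimate by combining the a priori $L^p$-bound with the first-step weighted $H^2$-regularity, all while keeping the $\rho$-weights consistent. A related point requiring care is confirming that every hypothesis of Theorem~\ref{thm:kim} survives the reduction of the exponent to $q/2$: the data classes descend from level $q$ to level $q/2$ because $\mathscr{D}$ and $\Omega\times(0,T)$ have finite measure and $\int_{\mathscr{D}}\rho^{\theta-d}\,dx<\infty$ (with the factor $\psi^{2/q}$ relating $\psi^{2/q-1}\phi$ to $\psi^{4/q-1}\phi$ being bounded), and one must check that the admissibility condition \eqref{eq:theta} tying $\theta$ to the integrability exponent remains in force at the reduced exponent.
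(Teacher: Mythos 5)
Your proposal follows essentially the same route as the paper's proof: treat $f(u,\nabla u)+f^0$ as the free term of the linear equation, verify $\psi f\in\mathbb{H}^{0,q}_\theta$ from the growth bound in Assumption A-\ref{ass:f} together with the $L^p$-estimate of Theorem~\ref{thm:sol_bound} (the paper packages this as Lemma~\ref{lem:free_term}), apply Theorem~\ref{thm:kim} with $n=0$, and for the second claim differentiate the nonlinearity and pair each product term (dominantly $\nabla_z f\cdot\partial_i\nabla u$) against the first-step weighted $H^2$-bound from Remark~\ref{rem:ineq_weighted} to land in the exponent $q/2$ before invoking Theorem~\ref{thm:kim} with $n=1$. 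The only cosmetic difference is your use of Cauchy--Schwarz where the paper uses Young's inequality to split these products, and the point you flag about the admissibility condition \eqref{eq:theta} at the reduced exponent $q/2$ is a genuine subtlety that the paper's own proof likewise leaves unaddressed.
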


\begin{remark} \label{rem:ineq_weighted}
Note that if $\psi^{-1}u\in \mathbb{H}^{2,q}_\theta (\mathscr{D})$, then by using Remark \ref{rem:lototsky}, we get
\[
\psi^{-1}u\in \mathbb{H}^{1,q}_\theta (\mathscr{D}) \,\,\, \text{and} \,\,\, \partial_i u \in \mathbb{H}^{1,q}_\theta (\mathscr{D}) \,\,\, \forall i=1,2,\ldots d.
\]
 Invoking Remark \ref{rem:lototsky} again, we have
\begin{equation}
\label{eq lototsky weights derivartives}
\psi^{-1}u\in \mathbb{H}^{0,q}_\theta (\mathscr{D}),\,\,\, \partial_i u \in \mathbb{H}^{0,q}_\theta (\mathscr{D}) \,\,\, \text{and} \,\,\, \psi \partial_i \partial_j u \in \mathbb{H}^{0,q}_\theta (\mathscr{D}) \,\,\, \forall i,j=1,2,\ldots d.
\end{equation}
\end{remark}

Finally, we present the result on time regularity of the solution of \eqref{eq:spde}.
\begin{theorem}\label{thm:time_reg}
Under the assumptions of Theorems \ref{thm:sol_bound} and \ref{thm:Lp_regularity}, 
\[
u \in C^\gamma \big([0,T]; H^{0,q}_{\theta+q} (\mathscr{D}) \big) \quad a.s.
\]
i.e., the solution $u$ to SPDE \eqref{eq:spde}, as a $H^{0,q}_{\theta+q} (\mathscr{D})$- valued process, is  H\"older continuous  of order  $\gamma$ for every $\gamma < \frac{1}{2}-\frac{2}{q}\,$  for every $q$ satisfying~\eqref{eq:theta}.
\end{theorem}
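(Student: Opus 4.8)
The plan is to write the increment $u_t-u_s$ as a drift (Bochner) integral plus a stochastic integral, to bound the $q$-th moment of its norm in the target space $X:=H^{0,q}_{\theta+q}(\mathscr{D})$ by a power of $|t-s|$, and then to invoke Kolmogorov's continuity criterion for $X$-valued processes. The two inputs are the spatial regularity $\psi^{-1}u\in\mathbb{H}^{2,q}_\theta(\mathscr{D})$ furnished by the first part of Theorem~\ref{thm:Lp_regularity}, together with the $L^p$-bound $\mathbb{E}\sup_t|u_t|_{L^p}^p<\infty$ of Theorem~\ref{thm:sol_bound}, which is available because the standing hypothesis gives $p\ge q\alpha-q$.

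First I would verify that the drift $b_r:=L_ru_r+f_r(u_r,\nabla u_r)+f_r^0$ and the diffusion $h_r:=(M_r^ku_r+g_r^k)_k$ lie in $L^q(\Omega\times(0,T);X)$ and $L^q(\Omega\times(0,T);X(l^2))$ respectively, where $X(l^2):=H^{0,q}_{\theta+q}(\mathscr{D};l^2)$. Remark~\ref{rem:ineq_weighted} gives $\psi^{-1}u,\partial_iu\in H^{0,q}_\theta$ and $\psi\partial_i\partial_ju\in H^{0,q}_\theta$; since $\psi\simeq\rho$ is bounded, these yield $u,\partial_iu,\partial_i\partial_ju\in X$, so under Assumption A-\ref{ass:unif_cont} (with $n=0$) the terms $L_ru_r$ and $M_r^ku_r$ belong to $X$ and $X(l^2)$. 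The data $f^0$ and $g$ fit through the elementary embedding $H^{0,q}_\theta\hookrightarrow H^{0,q}_{\theta+q}$, which follows from $\rho^{\theta+q-d}\le C\rho^{\theta-d}$ (recall $\rho$ is bounded and $\theta+q-d>0$ by \eqref{eq:theta}). The only genuinely nonlinear term is $f_r(u_r,\nabla u_r)$: using $|f_r(u_r,\nabla u_r)|\le K(1+|u_r|)^{\alpha-1}$ from Assumption A-\ref{ass:f} and boundedness of the weight, $\int_{\mathscr{D}}|f_r(u_r,\nabla u_r)|^q\rho^{\theta+q-d}\,dx\le C\big(1+\int_{\mathscr{D}}|u_r|^{(\alpha-1)q}\,dx\big)$, and $\mathbb{E}\sup_r\int_{\mathscr{D}}|u_r|^{(\alpha-1)q}\,dx<\infty$ precisely because $(\alpha-1)q\le p$.

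Next come the two increment estimates. For the drift, Hölder's inequality in time gives $\mathbb{E}\big|\int_s^tb_r\,dr\big|_X^q\le|t-s|^{q-1}\,\mathbb{E}\int_s^t|b_r|_X^q\,dr\le C|t-s|^{q-1}$. For the stochastic part I would apply the scalar Burkholder–Davis–Gundy inequality for each fixed $x$, together with Fubini's theorem, to get
\[
\mathbb{E}\Big|\sum_k\int_s^t(M_r^ku_r+g_r^k)\,dW_r^k\Big|_X^q\le C\int_{\mathscr{D}}\mathbb{E}\Big(\int_s^t|h_r(x)|_{l^2}^2\,dr\Big)^{q/2}\rho^{\theta+q-d}\,dx,
\]
and then Hölder in time inside the integral yields the bound $C|t-s|^{q/2-1}\,\mathbb{E}\int_s^t|h_r|_{X(l^2)}^q\,dr\le C|t-s|^{q/2-1}$. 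The stochastic term is the binding one, so altogether $\mathbb{E}|u_t-u_s|_X^q\le C|t-s|^{q/2-1}$ for all $s,t\in[0,T]$.

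Finally, when $q>4$ one has $q/2-1>1$, so Kolmogorov's continuity criterion for Banach-space-valued processes produces a modification of $u$ lying in $C^\gamma([0,T];X)$ for every $\gamma<\tfrac12-\tfrac2q$, which is the asserted regularity, $\tfrac12-\tfrac2q$ being the critical exponent (the claim is non-trivial precisely in the regime $q>4$ where this exponent is positive). The step I expect to be the main obstacle is the weight bookkeeping: one must check that every term generated by $L$ and $M$ genuinely lands in the shifted space $X$ after passing from weight $\theta$ to $\theta+q$, and that the semilinear term is $q$-integrable against $\rho^{\theta+q-d}$. It is exactly at this last point that the hypothesis $p\ge q\alpha-q$ becomes indispensable, tying the time-regularity conclusion back to the $L^p$-estimate of Theorem~\ref{thm:sol_bound}.
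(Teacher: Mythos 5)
Your proposal is correct and follows essentially the same route as the paper: the same drift/stochastic decomposition of $u_t-u_s$, H\"older in time for the drift, pointwise Burkholder--Davis--Gundy plus Fubini for the stochastic integral, the weight bookkeeping via Remark~\ref{rem:ineq_weighted} and the comparability of $\psi$ and $\rho$, the growth bound on $f$ combined with $p\ge q\alpha-q$ (this is exactly Lemma~\ref{lem:free_term} with $\tilde\theta=\theta+q$), and Kolmogorov's continuity criterion to conclude. Your remark that Kolmogorov yields H\"older exponents strictly below the critical value $\tfrac12-\tfrac2q$ is, if anything, slightly more careful than the paper's phrasing.
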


Note that one would like $u$ to be H\"older continuous with exponent $\gamma$ as a process with values in a weighted Sobolev space with the same weight exponent $\theta$ as in the results for spatial regularity (Theorem~\ref{thm:Lp_regularity}).
However we need to use~\eqref{eq lototsky weights derivartives} in our arguments
when proving Theorem~\ref{thm:time_reg} which leads to requiring the weight exponent to be $\theta + q$.

Before proving these theorems, we first prove the following lemma:

\begin{lemma}\label{lem:free_term}
Let  $\tilde{\theta} > d$ and $\tilde{q} \geq 1$. Further, let assumptions of Theorem \ref{thm:sol_bound} hold with $p~\geq~ \max (\tilde{q}\alpha - \tilde{q},2)$ and $f^0\in \mathbb{H}_{\tilde{\theta}}^{0,\tilde{q}}(\mathscr{D})$. If $u$  is the solution to~\eqref{eq:spde} and $f_t:=f_t(u_t,\nabla u_t)+~f_t^0$, then $f \in \mathbb{H}_{\tilde{\theta}}^{0,\tilde{q}}(\mathscr{D})$ and thus $\psi f \in \mathbb{H}_{\tilde{\theta}}^{0,\tilde{q}}(\mathscr{D})$.
\end{lemma}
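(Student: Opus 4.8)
The plan is to verify the defining integrability condition for $\mathbb{H}_{\tilde{\theta}}^{0,\tilde{q}}(\mathscr{D})$ directly, that is, to show
\[
\mathbb{E} \int_0^T \int_{\mathscr{D}} |f_t(x)|^{\tilde{q}}\, \rho^{\tilde{\theta}-d}(x)\, dx\, dt < \infty.
\]
Writing $f_t = f_t(u_t,\nabla u_t) + f_t^0$ and using $\tilde{q} \geq 1$, I would split this into the two contributions. The term coming from $f^0$ is finite by the hypothesis $f^0 \in \mathbb{H}_{\tilde{\theta}}^{0,\tilde{q}}(\mathscr{D})$, so everything reduces to controlling the semilinear part. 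For that I would invoke the polynomial growth bound $|f_t(x,u_t,\nabla u_t)| \leq K(1+|u_t|)^{\alpha-1}$ from Assumption A-\ref{ass:f}, which conveniently eliminates the dependence on $\nabla u$.

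The key simplification is that $\tilde{\theta} > d$ makes the exponent $\tilde{\theta}-d$ strictly positive. Since $\mathscr{D}$ is bounded, $\rho = \dist(\cdot,\partial\mathscr{D})$ is bounded above, so $\rho^{\tilde{\theta}-d}$ is bounded on $\mathscr{D}$ by a constant depending only on $\tilde{\theta}-d$ and the size of $\mathscr{D}$. This lets me discard the weight entirely and reduce the semilinear contribution to the unweighted quantity $\mathbb{E}\int_0^T\int_{\mathscr{D}} (1+|u_t|)^{(\alpha-1)\tilde{q}}\, dx\, dt$, up to a multiplicative constant.

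Setting $p' := (\alpha-1)\tilde{q}$, I would then observe that the hypothesis $p \geq \max(\tilde{q}\alpha - \tilde{q},2)$ gives exactly $p' \leq p$. Because $\mathscr{D}$ is bounded, H\"older's inequality yields $|u_t|_{L^{p'}} \leq N|u_t|_{L^p}$; combined with $(1+|u_t|)^{p'} \leq N(1+|u_t|^{p'})$ this bounds the integral above by $N\big(1 + \mathbb{E}\int_0^T |u_t|_{L^p}^{p'}\, dt\big) \leq N\big(1 + T\,\mathbb{E}\sup_{t\leq T}|u_t|_{L^p}^{p'}\big)$, and a further application of Jensen's inequality in $\omega$ (again using $p' \leq p$, so that $r \mapsto r^{p'/p}$ is concave) controls $\mathbb{E}\sup_{t\leq T}|u_t|_{L^p}^{p'}$ by $\big(\mathbb{E}\sup_{t\leq T}|u_t|_{L^p}^p\big)^{p'/p}$. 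The latter is finite by the $L^p$-estimate \eqref{eq:p_bound} of Theorem~\ref{thm:sol_bound}. The only genuinely load-bearing step is this matching of exponents $(\alpha-1)\tilde{q} \leq p$: it is precisely where the integrability hypothesis on $p$ enters, while the passage from the weighted to the unweighted estimate and all the elementary inequalities are routine. Finally, since $\psi$ is bounded on $\bar{\mathscr{D}}$, one has $|\psi f_t|^{\tilde{q}} \leq N|f_t|^{\tilde{q}}$, so the conclusion $\psi f \in \mathbb{H}_{\tilde{\theta}}^{0,\tilde{q}}(\mathscr{D})$ follows at once from $f \in \mathbb{H}_{\tilde{\theta}}^{0,\tilde{q}}(\mathscr{D})$.
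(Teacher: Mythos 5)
Your proposal is correct and follows essentially the same route as the paper's proof: split off the $f^0$ term, use the growth bound of Assumption A-\ref{ass:f} to remove the gradient dependence, discard the weight via $\tilde{\theta}>d$ and the boundedness of $\rho$, and reduce to the $L^p$-estimate of Theorem~\ref{thm:sol_bound}. You merely spell out the exponent-matching step $(\alpha-1)\tilde{q}\leq p$ (via H\"older on the bounded domain and Jensen in $\omega$) that the paper leaves implicit when it asserts finiteness of $\mathbb{E}\sup_t|u_t|^{\tilde{q}\alpha-\tilde{q}}_{L^{\tilde{q}\alpha-\tilde{q}}}$.
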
 

\begin{proof} 
First we note that $\tilde{\theta} > d$ and $\mathscr{D}$ is bounded, therefore $\sup_{x\in \mathscr{D}}\rho^{\tilde{\theta}-d}(x)<\infty$. 
Using this along with Assumption A-\ref{ass:f} implies  
\begin{equation}\label{eq:kim_reg_once}
\begin{split}
\mathbb{E}\int_0^T \int_\mathscr{D} & |f_t|^{\tilde{q}}  \rho^{\tilde{\theta}-d}dxdt =\mathbb{E}\int_0^T \int_\mathscr{D}|f_t(u_t, \nabla u_t)+f_t^0|^{\tilde{q}} \rho^{\tilde{\theta}-d}dxdt \\
& \leq  C  \Big[\mathbb{E}\int_0^T \int_\mathscr{D} (1+|u_t|)^{{\tilde{q}}\alpha-{\tilde{q}}} dxdt + \mathbb{E}\int_0^T \int_\mathscr{D}|f_t^0|^{\tilde q} \rho^{\tilde{\theta}-d}dxdt \Big]\\
& \leq  C \Big[1 + \mathbb{E} \sup_{0\leq t\leq T} |u_t|^{{\tilde{q}}\alpha-{\tilde{q}}}_{L^{{\tilde{q}}\alpha-{\tilde{q}}}}\Big]+ C\mathbb{E}\int_0^T \int_\mathscr{D}|f_t^0|^{\tilde{q}} \rho^{\tilde{\theta}-d}dxdt 
\end{split}
\end{equation}
which is finite in view of Theorem \ref{thm:sol_bound} and the fact $f^0\in \mathbb{H}_{\tilde{\theta}}^{0,{\tilde{q}}}(\mathscr{D})$. Now note that $\psi$ is bounded on $\bar{\mathscr{D}}$ and hence
\begin{equation*}
\mathbb{E}\int_0^T \int_\mathscr{D}  |\psi f_t|^q  \rho^{\theta-d}\,dxdt \leq 
C\mathbb{E}\int_0^T \int_\mathscr{D}  |f_t|^q  \rho^{\theta-d}\,dxdt<\infty \,.
\end{equation*} 
\end{proof}

\begin{proof}[Proof of Theorem~\ref{thm:Lp_regularity}]
Let $u$  be the solution to~\eqref{eq:spde} given by Theorem~\ref{thm:sol_bound}. Then 
considering $f_t(u_t,\nabla u_t)+f_t^0$ as a new free term $f_t$, the solution $u$ satisfies~\eqref{eq:lin_spde_bdry}.
We wish to apply Theorem~\ref{thm:kim} with $n=0$ and in order to do 
so we need to show that $\psi f\in \mathbb{H}^{0,q}_\theta(\mathscr{D})$.
Indeed this follows immediately by using Lemma \ref{lem:free_term} with $\tilde{\theta}=\theta$ and ${\tilde{q}}=q$. 
Hence applying Theorem~\ref{thm:kim} with $n=0$ we obtain $ \psi^{-1}u\in \mathbb{H}^{2,q}_\theta (\mathscr{D})$. 
This  completes the proof of the first statement of the theorem.

We now consider the case when Assumption A-\ref{ass:unif_cont} holds with $n=1$.
Again we will apply Theorem~\ref{thm:kim} (but now with $n=1$ and $\frac{q}{2}$ in place of $q$) and 
so we need to show that $\psi f\in \mathbb{H}^{1,\bar{q}}_\theta(\mathscr{D})$ with $\bar{q}:=\frac{q}{2}$.
Taking $\tilde{\theta}=\theta$ and ${\tilde{q}}=\bar{q}$ in Lemma \ref{lem:free_term}, we get $\psi f\in \mathbb{H}^{0,\bar{q}}_\theta(\mathscr{D})$.
Thus we consider
\begin{equation*}
\mathbb{E}\int_0^T \int_\mathscr{D}  |\partial_i \big(\psi f_t\big)|^{\bar{q}} \rho^{\theta-d+{\bar{q}}}dxdt = I_1 + I_2 \, ,
\end{equation*}
 where,
\begin{equation*}
I_1 :=\mathbb{E}\int_0^T \int_\mathscr{D} |f_t|^{\bar{q}} |\partial_i \psi|^{\bar{q}} \rho^{\theta-d+{\bar{q}}}dxdt \,\,\, \text{and} \,\,\, I_2:=\mathbb{E}\int_0^T \int_\mathscr{D}  |\partial_i f_t|^{\bar{q}} \psi^{\bar{q}} \rho^{\theta-d+{\bar{q}}}dxdt \,.
\end{equation*}
Clearly $I_1<\infty$ using  \eqref{eq:psi}, the fact $\rho$ is bounded on $\mathscr{D}$ and Lemma \ref{lem:free_term} (with $\tilde{\theta}=\theta$ and ${\tilde{q}}=\bar{q}$). 
Further observe that
\[
\begin{split}
\partial_i f_t & = \partial_i (f_t(u_t, \nabla u_t) + f^0_t) \\
& = \partial_i f_t(u_t, \nabla u_t)+\partial_i u_t \, \partial_r f_t(u_t, \nabla u_t)+\partial_i (\nabla u_t) \, \nabla_z f_t(u_t, \nabla u_t)
+\partial_i f_t^0,
\end{split}
\]
where $\nabla_z f_t$ is the gradient with respect to $z$ of $f_t=f_t(x,r,z)$.
Thus, we have 
\begin{equation} \label{eq:kim_reg_twice}
I_2 \leq C (I_3+I_4+I_5+I_6)
\end{equation}
where,
\[
I_3  := \mathbb{E}\int_0^T \int_\mathscr{D}  |\partial_i f_t(u_t, \nabla u_t)|^{\bar q} \psi^{\bar q} \rho^{\theta-d+{\bar{q}}}\, dxdt \,, 
\]
\[
I_4  := \mathbb{E}\int_0^T \int_\mathscr{D}  |\partial_i u_t \, \partial_r f_t(u_t, \nabla u_t)|^{\bar q} \psi^{\bar q} \rho^{\theta-d+{\bar{q}}}\, dxdt \,, 
\]
\[
I_5  := 
\mathbb{E}\int_0^T \int_\mathscr{D}  |\partial_i (\nabla u_t) \, \nabla_z f_t(u_t, \nabla u_t)|^{\bar q}\psi^{\bar q} \rho^{\theta-d+{\bar{q}}}\, dxdt \, ,
\]
and
\[
I_6  := \mathbb{E}\int_0^T \int_\mathscr{D}  |\partial_i f_t^0|^{\bar q} \psi^{\bar q} \rho^{\theta-d+{\bar{q}}}\, dxdt \,.
\]
Now, using the fact that $\psi$ and $\rho$ are bounded on $\mathscr{D}$ and the assumption on growth of derivatives of the semilinear term, 
see~\eqref{eq:f_kim}, we observe that 
\begin{equation*}
I_3  \leq C \mathbb{E}\int_0^T \int_\mathscr{D} \big( 1+|\partial_i f_t(u_t, \nabla u_t)| \big)^q   dxdt  \leq C \Big[ 1+ \mathbb{E}\int_0^T \int_\mathscr{D} (1+|u_t|)^{q\alpha-q} dxdt \Big]\,.
\end{equation*}
This is finite in view of Theorem \ref{thm:sol_bound}, see the estimate \eqref{eq:kim_reg_once} for details.
Further, using Young's inequality and the fact that $\psi$ and $\rho$ are bounded on $\mathscr{D}$ along with growth assumption \eqref{eq:f_kim}, we get
\begin{equation*}
\begin{split}
I_4 & \leq C \mathbb{E}\int_0^T \int_\mathscr{D} \Big[ |\partial_i u_t|^q + |\partial_r f_t(u_t, \nabla u_t)|^q \Big]  \rho^{\theta-d} dxdt \\
& \leq C \Big[|\partial_i u|_{\mathbb{H}^{0,q}_\theta}^q + \mathbb{E}\int_0^T \int_\mathscr{D} (1+|u_t|)^{q\alpha-2q} dxdt\Big]\,.
\end{split}
\end{equation*}
We see that this is finite using Remark \ref{rem:ineq_weighted} and Theorem \ref{thm:sol_bound} again.
Furthermore, using Young's inequality, growth assumption \eqref{eq:f_kim} and the fact that $\psi$ and $\rho$ are comparable, we obtain
\begin{equation*}
\begin{split}
I_5 & \leq C \mathbb{E}\int_0^T \int_\mathscr{D} \Big[ |\partial_i (\nabla u_t)|^q  + |\nabla_z f_t(u_t, \nabla u_t)|^q  \Big] \psi^q \rho^{\theta-d} dxdt \\
& \leq C \Big[|\psi \partial_i (\nabla u)|_{\mathbb{H}^{0,q}_\theta}^q + \mathbb{E}\int_0^T \int_\mathscr{D} (1+|u_t|)^{q\alpha-q} dxdt\Big].
\end{split}
\end{equation*}
Thus, applying Remark \ref{rem:ineq_weighted} and Theorem \ref{thm:sol_bound} as before, we obtain  $I_5<\infty$. 
Finally, the fact that $\psi$ and $\rho$ are comparable and bounded on $\mathscr{D}$ implies
\begin{equation*}
I_6  \leq C \mathbb{E}\int_0^T \int_\mathscr{D} \big( 1+|\partial_i f_t^0| \big)^q  \rho^{\theta-d+q}  dxdt  \leq C \Big[ 1+ \mathbb{E}\int_0^T \int_\mathscr{D} |\partial_if_t^0|^q  \rho^{\theta-d+q}dxdt \Big]
\end{equation*}
which is finite since $f^0\in \mathbb{H}_\theta^{1,q}(\mathscr{D})$. Thus $ \psi f \in \mathbb{H}_\theta^{1,\bar{q}}(\mathscr{D})$ and we can apply Theorem \ref{thm:kim} with  $n=1$ and $\bar{q}$ in place of $q$ to complete the proof.
\end{proof}


\begin{proof}[Proof of Theorem~\ref{thm:time_reg}]
We will prove the result using Kolmogorov continuity theorem.
To ease the notation we let $f_t:=f_t(u_t, \nabla u_t)+f_t^0$. 
Then from~\eqref{eq:spde} we see that
\begin{equation}\label{eq:kolmo_cont}
\mathbb{E}|u_t-u_s|_{H^{0,q}_{\theta+q}}^q \leq 2^{q-1}(I_1(s,t) + I_2(s,t)),
\end{equation}
where
\[
I_1(s,t):=\mathbb{E}\Big|\int_s^t (L_ru_r+f_r)dr\Big|^q_{H^{0,q}_{\theta+q}}
\,\,\,\text{and}\,\,\,
I_2(s,t):= \Big|\sum_{k\in\mathbb{N}}\int_s^t (M^k_ru_r+g^k_r)dW^k_r\Big|^q_{H^{0,q}_{\theta+q}}\,.
\]
We note that  $f^0\in \mathbb{H}_\theta^{0,q}(\mathscr{D})$ implies $f^0\in \mathbb{H}_{\theta+q}^{0,q}(\mathscr{D})$ because $\rho$ is bounded on $\mathscr{D}$. 
Now using H\"older's inequality, we get 
\begin{equation}\label{eq:drift_bound}
\begin{split}
I_1(s,t) & \leq (t-s)^{q-1}\mathbb{E}\int_s^t |L_ru_r+f_r|^q_{H^{0,q}_{\theta+q}}dr \\
& \leq C(t-s)^{q-1}\Big[\mathbb{E}\int_s^t |L_ru_r|^q_{H^{0,q}_{\theta+q}}dr+\mathbb{E}\int_s^t|f_r|^q_{H^{0,q}_{\theta+q}}dr\Big]\,.
\end{split} 
\end{equation}
Using Assumption A-\ref{ass:unif_cont} with $n=0$, we get 
\begin{equation*}
\begin{split}
& |L_ru_r|^q_{H^{0,q}_{\theta+q}} = \int_{\mathscr{D}} \Big|\sum_{j=1}^d\partial_j\Big(\sum_{i=1}^da_t^{ij}\partial_iu_r\Big)
+\sum_{i=1}^db_t^i\partial_iu_r+c_tu_r \Big |^q \rho^{\theta+q-d}dx \\
& \leq C \int_{\mathscr{D}}\Big( \sum_{i,j=1}^d| \partial_i\partial_ju_r|^q 
+\sum_{i=1}^d |\partial_iu_r|^q  + |u_r|^q \Big)\rho^{\theta+q-d} dx \\
& \leq C \Bigg(\sum_{i,j=1}^d|\psi \partial_i\partial_ju_r|^q_{H^{0,q}_{\theta}}+|\psi|_{C(\mathscr{\bar D})}^q \sum_{i=1}^d |\partial_iu_r|^q_{H^{0,q}_{\theta}}+|\psi|_{C(\mathscr{\bar D})}^{2q}|\psi^{-1}u_r|^q_{H^{0,q}_{\theta}}  \Bigg).
\end{split} 
\end{equation*}
Substituting this in \eqref{eq:drift_bound} and using the fact that $\psi$ is bounded on $\bar{\mathscr{D}}$, we obtain 
\begin{equation}\label{eq:drift}
\begin{split}
&I_1(s,t) \\
& \leq C(t-s)^{q-1}\Bigg(\sum_{i,j=1}^d|\psi \partial_i\partial_ju|^q_{\mathbb{H}^{0,q}_{\theta}}+ \sum_{i=1}^d |\partial_iu|^q_{\mathbb{H}^{0,q}_{\theta}}+|\psi^{-1}u|^q_{\mathbb{H}^{0,q}_{\theta}} +|f|^q_{\mathbb{H}^{0,q}_{\theta+q}} \Bigg) \\
& \leq  C(t-s)^{q-1},
\end{split} 
\end{equation}
where last statement follows using Remark \ref{rem:ineq_weighted} and Lemma \ref{lem:free_term} with $\tilde{\theta}=\theta+q$ and $\tilde{q}=q$. 

Furthermore using Burkholder--Davis--Gundy's inequality, Assumption A-\ref{ass:unif_cont} with $n=0$, H\"older's inequality and the fact that $\rho$ is bounded on $\mathscr{D}$, we see that    
\begin{equation}\label{eq:noise_bound}
\begin{split}
 I_2(s,t) & = \mathbb{E} \int_\mathscr{D}\Big|\sum_{k\in\mathbb{N}} \int_s^t  (M^k_ru_r+g^k_r)dW^k_r \Big|^q \rho^{\theta+q-d}dx\\
& \leq \int_{\mathscr{D}}\mathbb{E}\Big[\int_s^t  \sum_{k\in\mathbb{N}}|M^k_ru_r+g^k_r|^2dr\Big]^\frac{q}{2}  \rho^{\theta+q-d} dx \\
& = \int_{\mathscr{D}}\mathbb{E}\Big[\int_s^t  \sum_{k\in\mathbb{N}}\Big|\sum_{i=1}^d\sigma_r^{ik}\partial_iu_r+\mu_r^ku_r+g^k_r\Big|^2dr\Big]^\frac{q}{2}  \rho^{\theta+q-d} dx \\
& \leq C \int_{\mathscr{D}}\mathbb{E}\Big[\int_s^t  \Big(\sum_{i=1}^d|\partial_iu_r|^2+|u_r|^2+\sum_{k\in\mathbb{N}}|g^k_r|^2\Big)dr\Big]^\frac{q}{2}  \rho^{\theta+q-d} dx \\
& \leq C \int_\mathscr{D}(t-s)^{\frac{q}{2}-1}\mathbb{E} \Big[\int_s^t \Big( \sum_{i=1}^d|\partial_iu_r|^q+|u_r|^q+|g_r|_{\ell^2}^q\Big)dr\Big]  \rho^{\theta+q-d} dx \\
& \leq C(t-s)^{\frac{q}{2}-1}
\Bigg(\sum_{i=1}^d |\partial_iu|^q_{\mathbb{H}^{0,q}_{\theta}}+|\psi^{-1}u|^q_{\mathbb{H}^{0,q}_{\theta}} +|g|^q_{\mathbb{H}^{0,q}_{\theta}} \Bigg) \leq C(t-s)^{\frac{q}{2}-1}\,.
\end{split} 
\end{equation}
Here, the last inequality is obtained using Remark \ref{rem:ineq_weighted} as before and the assumption that $g\in \mathbb{H}^{1,q}_{\theta}(\mathscr{D};\ell^2)$. 
Using \eqref{eq:drift} and \eqref{eq:noise_bound} in \eqref{eq:kolmo_cont}, we obtain
\[
\mathbb{E}|u_t-u_s|_{H^{0,q}_\theta}^q \leq C |t-s|^{\frac{q}{2}-1}  
\]
which on using Kolmogorov continuity theorem concludes the result.
\end{proof}

\begin{corollary}
Under the assumptions of Theorems~\ref{thm:sol_bound},\ref{thm:higher_regularity} (parts (i) and (ii)) 
and~\ref{thm:Lp_regularity} we have
\[
u \in C^\alpha\big([0,T];H^1(\mathscr{D}')\big) \quad a.s.
\]
for every $\alpha < \frac{1}{4}-\frac{1}{q}$ with $q$ satisfying~\eqref{eq:theta} and $\mathscr{D}' \Subset \mathscr{D}$.
\end{corollary}
\begin{proof}
Note that for any open $\mathscr{D}' \Subset \mathscr{D}$, there exists a constant $M>0$ such that the distance function $\rho$ satisfies $|\rho(x)|\geq M$ for all $x \in \mathscr{D}'$. Therefore using Theorem~\ref{thm:time_reg}, we get that almost surely 
\begin{equation}\label{eq:time_interior}
\begin{split}
|u_t-u_s|_{L^q(\mathscr{D}')} &=\Big(\int_{\mathscr{D}'}|u_t-u_s|^q dx\Big)^\frac{1}{q}\leq \Big(\sup_{x\in \mathscr{D}'} \frac{1}{\rho^{\theta+q-d}}\int_{\mathscr{D}}|u_t-u_s|^q \rho^{\theta+q-d} dx\Big)^\frac{1}{q} \\
& \leq \frac{1}{\big(M^{\theta+q-d}\big)^\frac{1}{q}}|t-s|^{\frac{1}{2}-\frac{2}{q}-\epsilon}|u|_{C^{\frac{1}{2}-\frac{2}{q}-\epsilon} \big([0,T]; H^{0,q}_{\theta+q} (\mathscr{D}) \big)}
\end{split}
\end{equation}
for any $\epsilon>0$ and all $s,t\in [0,T]$.
Further, since $q\geq 2$, using H\"older's inequality we have  that there exists a random variable $C$ such that
\[
|u_t-u_s|_{L^2(\mathscr{D}')} \leq C |t-s|^{\frac{1}{2}-\frac{2}{q}-\epsilon}
\]
which implies that almost surely $u \in C^{\frac{1}{2}-\frac{2}{q}-\epsilon}\big([0,T];L^2(\mathscr{D}')\big) $ for any $\epsilon>0$. Furthermore using Theorem~\ref{thm:higher_regularity}, we have that almost surely $u \in C\big([0,T];H^2(\mathscr{D}')\big)$. Now using Gagliardo--Nirenberg inequality,  we have 
that almost surely for any $s,t \in [0,T]$
\begin{equation*}
\begin{split}
|u_t-u_s|_{H^1({\mathscr{D}'})}& \leq C |u_t-u_s|^\frac{1}{2}_{L^2(\mathscr{D}')}|u_t-u_s|^\frac{1}{2}_{H^2(\mathscr{D}')}\\
& \leq C \Big(|t-s|^{\frac{1}{2}-\frac{2}{q}-\epsilon}|u|_{C^{\frac{1}{2}-\frac{2}{q}-\epsilon} \big([0,T]; L^2(\mathscr{D}') \big)}\Big)^\frac{1}{2}\Big(2 |u|_{C\big([0,T];H^2(\mathscr{D}')\big)}\Big)^\frac{1}{2} \\
& \leq C |t-s|^{\frac{1}{4}-\frac{1}{q}-\frac{\epsilon}{2}}
\end{split}
\end{equation*}
for some random variable $C$ which concludes the result since $\epsilon>0$ is arbitrary.
\end{proof}

\section*{Acknowledgements}

The authors are grateful to the anonymous referees for their valuable suggestions which helped to significantly improve the paper.

\end{document}